\def\cl{{\mathcal L}}
\def\cn{{\mathcal N}}
\def\E{{\mathbb E}}
\def\N{{\mathbb N}}
\def\P{{\mathbb P}}
\def\R{{\mathbb R}}
\def\t{\theta}
\def\s{\star}
\def\noH{\cancel{H}}
\def\abs#1{\left|#1\right|}
\def\tr{\mathop{\rm tr}\nolimits}
\def\inv#1{\mathop{\frac{1}{ #1}}\nolimits}
\def\expp#1{\mathop {\mathrm{e}^{ #1}}}
\theoremstyle{plain}
\newtheorem{theorem}{Theorem}[section]
\newtheorem{proposition}[theorem]{Proposition}
\newtheorem{lemma}[theorem]{Lemma}
\newtheorem{corollary}[theorem]{Corollary}
\newtheorem{remarkh}[theorem]{Remark}
\newenvironment{remark}{\begin{remarkh}\rm}{\end{remarkh}}
\newtheorem{definition}[theorem]{Definition}
\theoremstyle{nonumberplain}
\newtheorem{proof}{Proof}
\newcounter{hypo}
\renewcommand{\thehypo}{(${\mathcal H}$-\arabic{hypo})}
\newcommand{\dohypo}{\thehypo}
\newenvironment{hypo}[1][]{%
\refstepcounter{hypo}
\list{}{%
\settowidth{\labelwidth}{\dohypo}%
\setlength{\labelsep}{10pt}%
\setlength{\leftmargin}{\labelwidth}
\advance\leftmargin\labelsep%
}%
\item[\dohypo  #1]%
  }{%
\endlist
}
\title{How many inner simulations to compute conditional expectations with least-square Monte Carlo?}
\date{\today}
\author{Aurélien Alfonsi\thanks{CERMICS, Ecole des Ponts, Marne-la-Vall\'ee, France. MathRisk, Inria, Paris,
  France.\newline
\indent email: \texttt{aurelien.alfonsi@enpc.fr}} \and Bernard Lapeyre\thanks{CERMICS, Ecole des Ponts, Marne-la-Vall\'ee, France. MathRisk, Inria, Paris,
  France.\newline
\indent email: \texttt{bernard.lapeyre@enpc.fr}} \and Jérôme Lelong\thanks{Univ. Grenoble Alpes, CNRS, Grenoble INP, LJK, 38000 Grenoble, France. \newline
\indent email: \texttt{jerome.lelong@univ-grenoble-alpes.fr}}}
\begin{document}
\maketitle
\begin{abstract}The problem of computing the conditional expectation $\E[f(Y)|X]$ with least-square Monte-Carlo is of general importance and has been widely studied. To solve this problem, it is usually assumed that one has as many samples of~$Y$ as of~$X$. However, when samples are generated by computer simulation and the conditional law of $Y$ given~$X$ can be simulated, it may be relevant to sample~$K\in \N$ values of~$Y$ for each sample of~$X$. The present work determines the optimal value of~$K$ for a given computational budget, as well as a way to estimate it. The main take away message is that the computational gain can be  all the more important as the computational cost of sampling $Y$ given $X$ is small with respect to the computational cost of sampling~$X$. Numerical illustrations on the optimal choice of~$K$ and on the computational gain are given on different examples including one inspired by risk management.
\end{abstract}

\noindent {\bf Keywords:} Least square Monte-Carlo, Conditional expectation estimators, Variance reduction\\
{\bf AMS 2020:} 65C05, 91G60\\
{\bf Acknowledgement:} AA and BL acknowledge the support of the chaire ``Risques financiers'', Fondation du Risque.\\
{\bf Declarations:}\\
$\bullet$ {\bf Competing interests:} The authors have no competing interests as defined by Springer, or other interests that might be perceived to influence the results and/or discussion reported in this paper.\\
$\bullet$ {\bf Data availability:} Not applicable.

\section{Introduction and Framework}

We consider the classical problem of computing a conditional expectation using a least-square Monte Carlo approach. To be more precise, let $(\Omega,\mathcal{F},\mathbb{P})$ be a probability space, $X:\Omega \to \R^d$ and $Y: \Omega \to \R^p$ be two random variables  and $f : \R^p \to \R$ be a measurable function such that $\E[{f(Y)}^2] < \infty$. We are interested in computing $\E[f(Y) | X]$ by using a parametrized approximation. Thus, we introduce a family of measurable functions $(\varphi(\t, \cdot))_{\t \in \R^q}$ from $\R^d$ to $\R$ satisfying for all $\t \in \R^q$, $\E[{\varphi(\t, X)}^2] < \infty$. This family will be used to approximate the conditional expectation $\E[f(Y) | X]$. It is well known that $\E[f(Y) | X]$ solves the two following minimisation problems
\begin{align*}
  \inf_{Z \in L^2(\Omega,\sigma(X))} \E[(Z - f(Y))^2], \ &\inf_{Z \in L^2(\Omega,\sigma(X))} \E[(Z - \E[f(Y) |X])^2],
\end{align*}
where $L^2(\Omega,\sigma(X))$ denotes the set of square integrable random variables that are measurable with respect to the $\sigma$-algebra generated by~$X$. Therefore, we are interested in the following minimization problems
\begin{align}\label{minim_pbms}
  \inf_{\t  \in \R^q} \E[(\varphi(\t,X) - f(Y))^2], \ &\inf_{\t \in \R^q} \E[(\varphi(\t,X) - \E[f(Y) |X])^2].
\end{align}
In practical cases, all these expectations are not explicit and it is often used Monte-Carlo estimators to approximate them. The classical problem of regression consists in minimizing $\inv{N} \sum_{i=1}^N \left( \varphi(\t, X_i) - f(Y_i) \right)^2$ with respect to~$\t$, where $(X_i,Y_i)_{i\ge 1}$ is a sequence of iid random variables with the same distribution as $(X,Y)$. In this work, we consider the possibility of having for each $X_i$ many samples of $Y$ given $X_i$. This is the case when samples are generated by computer simulation and when the conditional law of $Y$ given~$X$ can be simulated. More precisely, let $(X_i)_{i \ge 1} $ be a sequence of iid random variables following the distribution of $X$. For each $i \ge 1$, we introduce independent sequences $(Y_i^{(k)})_{k \ge 1}$ of iid random variables following the law $\cl(Y | X = X_i)$ of $Y$ conditionally on $X=X_i$. For $N,K\in \N^*$, we define the sequence of functions $v_N^K: \R^q \to \R$ by
\begin{align}
  \label{eq:vKN}
  v_N^K(\t) = \inv{N} \sum_{i=1}^N \left( \varphi(\t, X_i) - \inv{K} \sum_{k=1}^K f(Y_i^{(k)}) \right)^2.
\end{align}
We are interested in finding $\t_N^K$ minimizing $v_N^K$, so that $\varphi(\t_N^K, X)$ will give an approximation of $\E[f(Y)|X]$. Formally, the two minimisation problems of Equation~\eqref{minim_pbms} correspond respectively to $N=\infty, K=1$ and $N=K=\infty$. Note that the minimisation of~\eqref{eq:vKN} with $K=1$ corresponds to the classical case, with as many samples of $Y_i$ as of $X_i$. Up to our knowledge, most of the literature (if not all) considers the case of minimizing $v_N^1$ to approximate the conditional expectation, and we refer to Gyorfi et al.~\cite{GKKW} for a nice presentation of the topic and references. This may be understood from the point of view of statistics: on empirical data, one usually have as many observations of $X$'s and $Y$'s. However, when $X$ and $Y$ are generated by computer simulation, it is relevant to consider the possibility of sampling  $K\ge 2$ values of $Y$ for a given~$X$. The natural question then is to understand how to choose $N$ and $K$ in order to achieve the best accuracy for a given computational time. This is the goal of this paper.

The problem of computing conditional expectations is an important problem that arises in many different fields of research, such as the approximation of backward stochastic differential equations~\cite{BT,GLW},  the pricing of American options and more generally optimal stopping problems~\cite{LS}, and stochastic optimal control problems~\cite{BKS} to mention a few. It has a particular relevance in risk management, see e.g.~\cite{BaResi2,BDM,KrNiKo}, where financial institutions have to evaluate risk from a regulatory perspective. The valuation of future risks naturally involves conditional expectations. To be more precise, let us consider the case of insurance companies that have to calculate their Solvency Capital Requirement (SCR). This SCR can be calculated by computing expected losses under some stressed scenarios. This regulatory procedure to evaluate risk is called the ``standard formula''.  If one aims at evaluating the SCR at a future date~$T$ with the same procedure, one has to compute conditional expected losses under the different stressed scenarios, given all the market information between the current date and~$T$, see~\cite{alfonsi2021multilevel}. Therefore, one naturally has to deal with the numerical approximation of  conditional expectations. Let us stress that it is usually natural in this context  to be able to sample conditional laws: assets are usually modeled by a Markovian process that can be simulated, and we can then simulate as many paths as desired after~$T$, from a given path up to time~$T$. 

Many works have developed numerical methods based on nested
simulations and refinements to approximate an expectation that
involves conditional expectations. Among them we can cite \cite{GoJu}
which optimize nested simulations to estimate a value at risk on a
conditional expectation, \cite{BaReSi} which study nested simulations
in the context of risk insurance modeling and
\cite{alfonsi2021multilevel} which use a multilevel approach on the
same kind of insurance problem. But to the best of our knowledge, none
of these works are interested in the nested approximation of
conditional expectations using a parametric representation as done in
this work.

The paper is structured as follows. First, Section~\ref{Sec_AC} presents our main assumptions, under which we are able to show, by quite standard arguments, the convergence of~$\t^K_N$ as well as a Central Limit Theorem. Section~\ref{Sec_Main} presents the main results of the paper. In particular, Theorem~\ref{thm:optimK} gives a precise asymptotic of the suboptimality of~$\t^K_N$ (with respect to $\t^\s$) as a function of $K$ and $N$, and the optimal value of $K$  for a given computational budget. It also gives estimators to approximate it. The computational gain is all the more important as the computational cost of sampling $Y$ given $X$ is small and the approximation family is close to the conditional expectation. Section~\ref{Sec_linearcase} gives a focus on the particularly important case of linear regression. We are able to refine the result of Theorem~\ref{thm:optimK} in this context. Besides, Proposition~\ref{prop:linear_reg} shows for a particular choice of approximating functions that the optimal value of~$K$ can be arbitrarily large.  Finally, Section~\ref{Sec_num} presents numerical results and shows the relevance of considering $K>1$ on different examples. We also compare different estimators that approximate~$K$, and it comes out that one estimator is more relevant for practical use.

\section{Assumptions and Convergence results}\label{Sec_AC}

In this section, we apply the general results on the convergence of the estimators of the optimal solutions presented by~\cite[Section 2.6]{ShRu93}. We introduce the function
\begin{align}
   v^\infty(\t) & = \E\left[ \left( \varphi(\t, X) - \E[f(Y) | X] \right)^2 \right] \label{def_v_infty}
\end{align}
and make the following assumptions.

\paragraph{Assumptions} Let $C\subset \R^q$ be a compact set with a non-empty interior~$\mathring{C}$.
\begin{hypo}
  \label{hyp:ui}
  Uniform integrability: $\E\left[\sup_{\t \in C} \abs{\varphi(\t, X)}^2\right] < \infty$.
\end{hypo}
\begin{hypo}
  \label{hyp:continuous}
  The function $\t \longmapsto \varphi(\t, X)$ is a.s. continuous on~$C$.
\end{hypo}
\begin{hypo}
  \label{hyp:unique}
  The function $v^\infty$ admits on~$C$ a unique minimizer $\t^\s \in \mathring{C}$. % satisfying $\nabla v^\infty(\t^\s) = 0$.
\end{hypo}
\begin{hypo}
  \label{hyp:C2-ui}
  The function $\t \longmapsto \varphi(\t, X)$ is a.s. twice continuously differentiable on~$C$ and such that
  \begin{equation*}
    %% \E\left[\sup_{\t \in C} \abs{\varphi(\t, X)}^2 \right] < \infty ; \quad
    \E\left[\sup_{\t \in C} \abs{\nabla \varphi(\t, X)}^2 \right] < \infty; \quad
    \E\left[\sup_{\t \in C} \abs{\nabla^2 \varphi(\t, X)}^2 \right] < \infty.
  \end{equation*}
\end{hypo}
Here, and in the whole paper, the gradient $\nabla$ is taken with respect to~$\theta$. Let us note that  Hypotheses~\ref{hyp:ui},~\ref{hyp:continuous} and~\ref{hyp:C2-ui} are satisfied in the case of the linear regression, see Section~\ref{Sec_linearcase} for further details.

To apply the results on the convergence of the estimators presented by~\cite[Section 2.6]{ShRu93}, we  introduce the function $$\Phi(\theta,Z)=\left(\varphi(\theta,X)-\frac 1K \sum_{k=1}^Kf( Y^{(k)})\right)^2,$$ with $Z=(X,Y^{(1)},\dots,Y^{(K)})$, where the sequence $(Y^{(k)})_{k\ge 1}$ is conditionally iid given~$X$, and given $X=x$ follows the distribution $\cl(Y | X=x)$\footnote{In practice, we typically have  $Y=F(X,U)$ with $U$ independent of~$X$ and $F$ a measurable function, and the conditional independence means that $Y^{(k)}=F(X,U^{(k)})$ with $X,U^{(1)},\dots,U^{(K)}$ independent and $\mathcal{L}(U^{(i)})=\mathcal{L}(U)$. }. We also define, for $K\in \N^*$,  the function
\begin{align}\label{def_v_K}
  v^K(\t) = \E\left[ \left( \varphi(\t, X) - \inv{K} \sum_{k=1}^K f(Y^{(k)}) \right)^2 \right],
\end{align}
and $v^K_N(\theta)=\frac 1 N \sum_{i=1}^N\Phi(\theta,Z_i)$, so that $v^K(\theta)=\E[v^K_N(\theta)]$.
Since $|\Phi(\theta,Z)|\le  2|\varphi(\theta,X)|^2+\frac 2K \sum_{k=1}^K f(Y^{(k)})^2$, we get the uniform integrability on~$C$ by using~\ref{hyp:ui}. The continuity of $\Phi$ with respect to~$\theta\in C$ is clear by~\ref{hyp:continuous}, and we get the following lemma from the uniform law of large numbers, see Lemma~\ref{lem:ulln}.
\begin{lemma}
  Under~\ref{hyp:ui} and~\ref{hyp:continuous}, for every fixed $K \in \N$,
  $\sup_{\theta \in C}|v_N^K(\theta)-v^K(\theta)| \to 0$ almost surely as $N\to \infty$.
\end{lemma}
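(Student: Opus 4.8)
The plan is to recognise this as an instance of a uniform strong law of large numbers and to deduce it from Lemma~\ref{lem:ulln} applied to the map $\Phi$ introduced above, writing $v_N^K(\theta)=\inv{N}\sum_{i=1}^N\Phi(\theta,Z_i)$ and $v^K(\theta)=\E[\Phi(\theta,Z)]$. The samples $(Z_i)_{i\ge1}$ are iid and $C$ is compact, so the only substantive points are to verify the two structural hypotheses required by such a ULLN: that $\theta\mapsto\Phi(\theta,Z)$ is almost surely continuous on~$C$, and that $\Phi$ admits an integrable envelope, namely $\E\big[\sup_{\theta\in C}\abs{\Phi(\theta,Z)}\big]<\infty$. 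Once these are in place the conclusion is immediate.

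The envelope bound is the step I would treat as the heart of the argument. Starting from the elementary inequalities $(a-b)^2\le 2a^2+2b^2$ and, by convexity of $x\mapsto x^2$, $\big(\inv{K}\sum_{k=1}^K f(Y^{(k)})\big)^2\le \inv{K}\sum_{k=1}^K f(Y^{(k)})^2$, I would bound, uniformly in~$\theta$,
\[ \sup_{\theta\in C}\abs{\Phi(\theta,Z)}\le 2\sup_{\theta\in C}\abs{\varphi(\theta,X)}^2+\frac2K\sum_{k=1}^K f(Y^{(k)})^2. \]
Taking expectations, the first term is finite by~\ref{hyp:ui}. For the second, since each $Y^{(k)}$ given $X$ is distributed according to $\cl(Y\mid X)$, the tower property gives $\E[f(Y^{(k)})^2]=\E[\E[f(Y)^2\mid X]]=\E[f(Y)^2]$, which is finite by the standing assumption $\E[f(Y)^2]<\infty$. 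Hence $\E[\sup_{\theta\in C}\abs{\Phi(\theta,Z)}]<\infty$, so $\Phi$ has an integrable envelope.

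For the continuity requirement, I would note that the empirical average $\inv{K}\sum_{k=1}^K f(Y^{(k)})$ does not depend on~$\theta$, so $\theta\mapsto\Phi(\theta,Z)$ is the composition of the a.s.\ continuous map $\theta\mapsto\varphi(\theta,X)$ (hypothesis~\ref{hyp:continuous}) with the continuous squaring operation; it is therefore a.s.\ continuous on~$C$, and measurable in~$Z$ for each fixed~$\theta$. With these two properties verified, I would simply invoke Lemma~\ref{lem:ulln} to obtain $\sup_{\theta\in C}\abs{v_N^K(\theta)-v^K(\theta)}\to0$ almost surely. The only mild subtlety, and the one place I would be careful, is the measurability of the supremum $\sup_{\theta\in C}\abs{\Phi(\theta,Z)}$; this is standard given a.s.\ continuity and compactness of~$C$ (the supremum may be restricted to a countable dense subset of~$C$), so it poses no real difficulty, and no self-contained covering or equicontinuity argument is needed once the ULLN is available.
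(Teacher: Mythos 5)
Your proof is correct and follows essentially the same route as the paper: the same envelope bound $\sup_{\theta\in C}|\Phi(\theta,Z)|\le 2\sup_{\theta\in C}|\varphi(\theta,X)|^2+\frac{2}{K}\sum_{k=1}^K f(Y^{(k)})^2$ (integrable by~\ref{hyp:ui} and $\E[f(Y)^2]<\infty$), the same continuity observation from~\ref{hyp:continuous}, and the same appeal to the uniform law of large numbers of Lemma~\ref{lem:ulln}. Your additional remarks on the tower property and on measurability of the supremum are fine but not needed beyond what the paper already records.
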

The next lemma makes explicit the link between~$v^\infty(\t)$ and~$v^K(\t)$ defined respectively by~\eqref{def_v_infty} and~\eqref{def_v_K}.
\begin{lemma}\label{lem_vk_vinfty}
  We have for all $\t \in C$,
  \begin{align}
  v^K(\t) & = v^\infty(\t) + \inv{K} \E[(f(Y)- \E[f(Y) | X])^2 ]. \label{expression_vK}
\end{align}
\end{lemma}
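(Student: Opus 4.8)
The plan is to introduce the conditional empirical mean $\bar f_K = \inv{K}\sum_{k=1}^K f(Y^{(k)})$ and the true conditional expectation $m(X) := \E[f(Y)|X]$, and to center the integrand of $v^K(\t)$ around $m(X)$. First I would write, for fixed $\t \in C$, the decomposition
\begin{align*}
  \varphi(\t,X) - \bar f_K = \bigl(\varphi(\t,X) - m(X)\bigr) + \bigl(m(X) - \bar f_K\bigr),
\end{align*}
expand the square and take expectations term by term, obtaining three contributions: $\E[(\varphi(\t,X)-m(X))^2]$, the cross term $2\E[(\varphi(\t,X)-m(X))(m(X)-\bar f_K)]$, and $\E[(m(X)-\bar f_K)^2]$. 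The first contribution is exactly $v^\infty(\t)$ by its definition~\eqref{def_v_infty}, so it only remains to show that the cross term vanishes and that the last term equals the advertised $\inv{K}\E[(f(Y)-\E[f(Y)|X])^2]$.

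For the cross term I would condition on~$X$. Since $\varphi(\t,X)-m(X)$ is $\sigma(X)$-measurable, the tower property gives $\E[(\varphi(\t,X)-m(X))(m(X)-\bar f_K)] = \E[(\varphi(\t,X)-m(X))\,\E[m(X)-\bar f_K \,|\, X]]$. By the conditional iid property of $(Y^{(k)})_{k\ge 1}$ given~$X$, each $f(Y^{(k)})$ has conditional mean $m(X)$, hence $\E[\bar f_K \,|\, X] = m(X)$ and the inner conditional expectation is zero. Therefore the cross term vanishes.

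For the last term, conditioning on~$X$ once more and using that the $Y^{(k)}$ are conditionally iid given~$X$, the conditional variance of the empirical mean is $\E[(m(X)-\bar f_K)^2 \,|\, X] = \inv{K}\Var(f(Y)\,|\,X) = \inv{K}\E[(f(Y)-m(X))^2 \,|\, X]$. Taking expectations and applying the tower property yields $\inv{K}\E[(f(Y)-\E[f(Y)|X])^2]$, which combined with the previous two steps gives~\eqref{expression_vK}. The only point requiring a little care is the correct use of the conditional independence of the $Y^{(k)}$'s when computing the conditional mean and variance of $\bar f_K$; once the conditioning is set up, this is a standard bias–variance type computation and presents no genuine obstacle.
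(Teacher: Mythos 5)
Your proof is correct, and it organizes the computation differently from the paper. The paper expands the square of the sum $\bigl(\varphi(\t,X)-\frac1K\sum_k f(Y^{(k)})\bigr)^2$ index-wise, separating the $K$ diagonal terms from the $K(K-1)$ off-diagonal ones: conditional independence handles the off-diagonal terms, and the Pythagorean identity for the orthogonal projection $\E[f(Y)|X]$ handles the diagonal term $\E[(\varphi(\t,X)-f(Y))^2]$. You instead perform a bias--variance decomposition around $m(X)=\E[f(Y)|X]$ before squaring, so the same two ingredients appear in a different guise: orthogonality of $\sigma(X)$-measurable quantities to $m(X)-\bar f_K$ kills the cross term in one stroke, and conditional independence enters only through the textbook fact that the conditional variance of the empirical mean of $K$ conditionally iid variables is $\frac1K\Var(f(Y)\,|\,X)$. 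Your route is arguably tidier, since it avoids the explicit bookkeeping of the $k\neq k'$ terms and makes the $1/K$ factor appear for the transparent reason (it is the variance of an average); the paper's version has the minor advantage of exhibiting $v^K$ directly as a convex combination of $v^1$ and $v^\infty$ along the way. All expectations you manipulate are finite by \ref{hyp:ui} and $\E[f(Y)^2]<\infty$ via Cauchy--Schwarz, so there is no hidden integrability issue.
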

\begin{proof}
  We expand~\eqref{def_v_K} and get
\begin{align*}
  v^K(\t) = \inv{K} \E\left[ \left( \varphi(\t, X) - f(Y) \right)^2 \right] + \frac{1}{K^2} \sum_{k \neq k'}\E\left[ \left( \varphi(\t, X) - f(Y^{(k)}) \right) \left( \varphi(\t, X) - f(Y^{(k')}) \right) \right].
\end{align*}
On the one hand, using the conditional independence of the $Y^{(k)}$'s, we get for $k\not = k'$
\begin{align*}
  & \E\left[ \left( \varphi(\t, X) - f(Y^{(k)}) \right) \left( \varphi(\t, X) - f(Y^{(k')}) \right) \right] \\
  & = \E\left[ \E\left[ \varphi(\t, X) - f(Y^{(k)}) | X \right] \E\left[ \varphi(\t, X) - f(Y^{(k')}) | X \right] \right] \\
  & = \E\left[ \left( \varphi(\t, X) - \E[f(Y) | X] \right)^2  \right].
\end{align*}
On the other hand, as the conditional expectation is an orthogonal projection,
\begin{align*}
  \E\left[ \left( \varphi(\t, X) - f(Y) \right)^2 \right] & = \E\left[ \left( (\varphi(\t, X) - \E[f(Y) | X]) + (\E[f(Y) | X] - f(Y)) \right)^2 \right] \\
  & = \E\left[ \left( \varphi(\t, X) - \E[f(Y) | X]\right)^2\right] + \E\left[\left(\E[f(Y) | X] - f(Y) \right)^2 \right].
\end{align*}
This yields to the claim.
\end{proof}

Let $\t_N^K$ (resp. $\t^\s$) be a minimizer of $v_N^K$ (resp. $v^\infty$) on the compact set~$C$, i.e.
\begin{align*}
  v_N^K(\t_N^K) = \inf_{\t \in C} v_N^K(\t) \quad \text{and} \quad v^\infty(\t^\s) = \inf_{\t \in C} v^\infty(\t).
\end{align*}
By Lemma~\ref{lem_vk_vinfty}, $v^K$ and $v^\infty$ differ only by a constant. So, $\t^\s$ is also the unique minimizer of $v^K$ for every $K$. Therefore, we have the following result from~\cite[Theorem A1, p.~67]{ShRu93}.
\begin{proposition}
  \label{prop:slln}
  Under~\ref{hyp:ui},~\ref{hyp:continuous},~\ref{hyp:unique}, for every fixed $K$, $\t_N^K \to \t^\s$ a.s. when $N \to \infty$.
\end{proposition}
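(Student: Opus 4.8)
The plan is to run the standard argmin-consistency argument for $M$-estimators, exploiting the uniform law of large numbers already recorded in the first lemma together with the uniqueness of the minimizer. First I would fix a realization $\omega$ in the almost-sure event on which $\sup_{\t\in C}\abs{v_N^K(\t)-v^K(\t)}\to 0$ as $N\to\infty$; all the subsequent analysis is deterministic on this event. Since $C$ is compact, the sequence $(\t_N^K)_{N\ge 1}$ is bounded and admits at least one limit point, so the goal reduces to showing that every limit point equals $\t^\s$, which then forces $\t_N^K\to\t^\s$.

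Second, I would record that $v^K$ is continuous on $C$. This follows from~\ref{hyp:ui} and~\ref{hyp:continuous} by dominated convergence: the integrand $(\varphi(\t,X)-\inv{K}\sum_{k=1}^K f(Y^{(k)}))^2$ is a.s.\ continuous in $\t$ and dominated by the integrable envelope $2\sup_{\t\in C}\abs{\varphi(\t,X)}^2 + \frac{2}{K}\sum_{k=1}^K f(Y^{(k)})^2$. By Lemma~\ref{lem_vk_vinfty}, $v^K=v^\infty+\inv{K}\E[(f(Y)-\E[f(Y)|X])^2]$ differs from $v^\infty$ only by a constant, so~\ref{hyp:unique} guarantees that $\t^\s$ is also the unique minimizer of $v^K$ on $C$.

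Third comes the core estimate. Let $\t'$ be a limit point of $(\t_N^K)$, say $\t_{N_j}^K\to\t'$ along a subsequence. By optimality of $\t_N^K$ for $v_N^K$ we have $v_{N_j}^K(\t_{N_j}^K)\le v_{N_j}^K(\t^\s)$. On the one hand, uniform convergence gives $\abs{v_{N_j}^K(\t_{N_j}^K)-v^K(\t_{N_j}^K)}\le \sup_{\t\in C}\abs{v_{N_j}^K(\t)-v^K(\t)}\to 0$, while continuity of $v^K$ gives $v^K(\t_{N_j}^K)\to v^K(\t')$; combining these two, $v_{N_j}^K(\t_{N_j}^K)\to v^K(\t')$. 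On the other hand, $v_{N_j}^K(\t^\s)\to v^K(\t^\s)$. Passing to the limit in the optimality inequality yields $v^K(\t')\le v^K(\t^\s)$, and since $\t^\s$ is the unique minimizer of $v^K$ this forces $\t'=\t^\s$.

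Finally, I would conclude that every limit point of the bounded sequence $(\t_N^K)$ equals $\t^\s$, hence the whole sequence converges to $\t^\s$ on the almost-sure event, which is exactly the claim. The only genuinely delicate point is the interaction of the uniform convergence with the \emph{moving} argument $\t_{N_j}^K$ in the third step: one must route the argument through the fixed limit function via $v^K(\t_{N_j}^K)$ rather than attempting to control $v_N^K$ at a single fixed point, and this is precisely where both the uniformity in the law of large numbers and the continuity of $v^K$ are indispensable.
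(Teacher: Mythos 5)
Your argument is correct and is in substance the same route as the paper: the paper verifies exactly the ingredients you use (the uniform law of large numbers for $v_N^K$, and the fact via Lemma~\ref{lem_vk_vinfty} that $v^K$ and $v^\infty$ differ by a constant so that $\t^\s$ is the unique minimizer of $v^K$) and then simply invokes \cite[Theorem A1, p.~67]{ShRu93}, whose proof is the standard argmin-consistency argument you have written out. Your explicit subsequence/limit-point treatment, including routing the moving argument through the continuous limit function $v^K$, is a faithful unpacking of that cited theorem rather than a genuinely different proof.
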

Beside this almost sure convergence result, we also have a central limit theorem under additional assumptions.
\begin{proposition}
  \label{prop:tcl}
  Under the assumptions of Proposition~\ref{prop:slln},~\ref{hyp:C2-ui} and if $\E\left[\left( \varphi(\t^\s, X) - \inv{K} \sum_{k=1}^K f(Y^{(k)}) \right)^2 | \nabla \varphi(\t^\s, X)|^2 \right]<\infty$ and the matrix $H:=\nabla^2 v^\infty(\t^\s)$ is positive definite, we have
  \begin{align}\label{conv_thetaKN}
    \sqrt{N} (\t_N^K - \t^\s) \xrightarrow[N \to \infty]{\cl} \cn(0, 4 H^{-1} \Gamma^{K} H^{-1})
  \end{align}
  with \begin{equation}\label{Gamma_K_expr}
    \Gamma^K=A+B/K,
  \end{equation}
where $A,B\in \R^{q\times q}$ are the following semi-definite positive matrices:
  \begin{align}
  A&=\E\left[ (\varphi(\t^\s, X) - \E[f(Y) | X])^2 \nabla \varphi(\t^\s, X) \nabla \varphi(\t^\s, X)^T  \right], \label{def_A}\\
  B&=\E\left[ \left(f(Y)- \E[f(Y) | X] \right)^2 \nabla \varphi(\t^\s, X) \nabla \varphi(\t^\s, X)^T \right]. \label{def_B}
\end{align}
  Furthermore, we have \begin{equation}\label{asymp_vinf}
  N(v^\infty(\t^K_N)-v^\infty(\t^\star))\xrightarrow[N \to \infty]{\cl}
  2 G^THG \text{ with } G\sim\cn(0, H^{-1} \Gamma^{K}
  H^{-1}).
\end{equation}
\end{proposition}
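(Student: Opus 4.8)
The plan is to treat $\t_N^K$ as an M-estimator and run the classical Z-estimator asymptotics, in the spirit of~\cite[Section 2.6]{ShRu93}. Since $\t^\s\in\mathring C$ and $\t_N^K\to\t^\s$ a.s.\ by Proposition~\ref{prop:slln}, for $N$ large enough $\t_N^K$ lies in the interior of~$C$ and therefore satisfies the first-order condition $\nabla v_N^K(\t_N^K)=0$. I would first record the derivatives of $\Phi$: writing $\bar f=\inv K\sum_{k=1}^K f(Y^{(k)})$,
\begin{align*}
  \nabla\Phi(\t,Z)&=2\left(\varphi(\t,X)-\bar f\right)\nabla\varphi(\t,X),\\
  \nabla^2\Phi(\t,Z)&=2\,\nabla\varphi(\t,X)\nabla\varphi(\t,X)^T+2\left(\varphi(\t,X)-\bar f\right)\nabla^2\varphi(\t,X),
\end{align*}
so that $\nabla v_N^K$ and $\nabla^2 v_N^K$ are the empirical means of these quantities, finite a.s.\ and $C^2$ by~\ref{hyp:C2-ui}.

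The core step is a first-order expansion of the gradient. To avoid the vector-valued mean value issue I would use the fundamental theorem of calculus: with $M_N=\int_0^1\nabla^2 v_N^K\!\left(\t^\s+t(\t_N^K-\t^\s)\right)dt$, and once $\nabla v_N^K(\t_N^K)=0$,
\begin{align*}
  -\nabla v_N^K(\t^\s)=\nabla v_N^K(\t_N^K)-\nabla v_N^K(\t^\s)=M_N\,(\t_N^K-\t^\s),
\end{align*}
whence $\sqrt N(\t_N^K-\t^\s)=-M_N^{-1}\sqrt N\,\nabla v_N^K(\t^\s)$ as soon as $M_N$ is invertible. For the noise term, note that $\nabla v^K(\t^\s)=\nabla v^\infty(\t^\s)=0$, since $\t^\s\in\mathring C$ minimizes the $C^2$ function $v^\infty$ and $v^K$ differs from it by a constant (Lemma~\ref{lem_vk_vinfty}); differentiation under the expectation, legitimate by~\ref{hyp:C2-ui}, then gives $\E[\nabla\Phi(\t^\s,Z)]=0$. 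The extra moment assumption of the statement is exactly $\E[\abs{\nabla\Phi(\t^\s,Z)}^2]<\infty$, so the multivariate CLT yields $\sqrt N\,\nabla v_N^K(\t^\s)\xrightarrow{\cl}\cn(0,\Cov(\nabla\Phi(\t^\s,Z)))$. Conditioning on~$X$ and using $\E[\bar f\mid X]=\E[f(Y)\mid X]$ with $\Var(\bar f\mid X)=\inv K\Var(f(Y)\mid X)$, the decomposition $\varphi(\t^\s,X)-\bar f=(\varphi(\t^\s,X)-\E[f(Y)\mid X])+(\E[f(Y)\mid X]-\bar f)$ makes the cross-term vanish and produces $\Cov(\nabla\Phi(\t^\s,Z))=4(A+B/K)=4\Gamma^K$, with $A,B$ as in~\eqref{def_A}--\eqref{def_B}.

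What remains, and what I expect to be the only genuine obstacle, is showing $M_N\to H$ a.s. The integrand of $M_N$ is evaluated at the random, $N$-dependent points $\t^\s+t(\t_N^K-\t^\s)$, so a pointwise law of large numbers does not suffice. I would instead invoke a uniform law of large numbers for the Hessian, $\sup_{\t\in C}\abs{\nabla^2 v_N^K(\t)-\nabla^2 v^K(\t)}\to 0$ a.s.\ (Lemma~\ref{lem:ulln}); its integrability requirement $\E[\sup_{\t\in C}\abs{\nabla^2\Phi(\t,Z)}]<\infty$ follows from~\ref{hyp:ui} and~\ref{hyp:C2-ui} by Cauchy--Schwarz applied to the two terms of $\nabla^2\Phi$. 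Combining this uniform bound with the continuity of $\t\mapsto\nabla^2 v^K(\t)$ and $\t_N^K\to\t^\s$, all evaluation points converge to $\t^\s$ uniformly in $t\in[0,1]$, so $M_N\to\nabla^2 v^K(\t^\s)$. Since $v^K-v^\infty$ is constant, $\nabla^2 v^K(\t^\s)=\nabla^2 v^\infty(\t^\s)=H$, which is positive definite; hence $M_N$ is eventually invertible with $M_N^{-1}\to H^{-1}$, and Slutsky's lemma gives $\sqrt N(\t_N^K-\t^\s)\xrightarrow{\cl}-H^{-1}\cn(0,4\Gamma^K)=\cn(0,4H^{-1}\Gamma^K H^{-1})$, using the symmetry of~$H$. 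This is~\eqref{conv_thetaKN}.

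For~\eqref{asymp_vinf}, I would Taylor-expand $v^\infty$ at its minimizer: since $\nabla v^\infty(\t^\s)=0$, the same integral form gives $v^\infty(\t_N^K)-v^\infty(\t^\s)=\frac12(\t_N^K-\t^\s)^T\tilde M_N(\t_N^K-\t^\s)$ with $\tilde M_N=\int_0^1\nabla^2 v^\infty(\t^\s+t(\t_N^K-\t^\s))dt\to H$ by continuity and $\t_N^K\to\t^\s$. Multiplying by $N$ and inserting the convergence just established, the continuous mapping theorem yields $N(v^\infty(\t_N^K)-v^\infty(\t^\s))\xrightarrow{\cl}\frac12 W^THW$ where $W\sim\cn(0,4H^{-1}\Gamma^K H^{-1})$. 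Finally $W\stackrel{d}{=}2G$ with $G\sim\cn(0,H^{-1}\Gamma^K H^{-1})$, so $\frac12 W^THW\stackrel{d}{=}2G^THG$, which is the announced limit.
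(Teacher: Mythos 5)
Your proof is correct and follows essentially the same route as the paper: the paper verifies the same integrability conditions on $\nabla\Phi$ and $\nabla^2\Phi$ and then invokes \cite[Theorem A2, p.~74]{ShRu93} as a black box for~\eqref{conv_thetaKN}, whereas you write out that theorem's content (first-order condition, CLT for the score, uniform LLN for the Hessian, Slutsky sandwich) explicitly; this is a legitimate self-contained substitute, and your localization of the segment $[\t^\s,\t_N^K]$ inside a ball of $\mathring{C}$ correctly avoids any convexity assumption on~$C$. Your identification $\Cov(\nabla\Phi(\t^\s,Z))=4(A+B/K)$ via the conditional bias--variance split is an equivalent rephrasing of the paper's direct expansion of the double sum, and the Taylor argument for~\eqref{asymp_vinf} matches the paper's.
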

\begin{proof}
  First, we check some properties on gradients. We have $\nabla \Phi(\theta,Z)=2(\varphi(\theta,X)-\frac 1K \sum_{k=1}^Kf( Y^{(k)})) \nabla\varphi(\theta,X)$ and $\nabla^2\Phi(\theta,Z)= 2\nabla\varphi(\theta,X)\nabla\varphi(\theta,X)^T+2(\varphi(\theta,X)-\frac 1K \sum_{k=1}^K f(Y^{(k)})) \nabla^2\varphi(\theta,X)$. From Cauchy-Schwarz inequality,~\ref{hyp:C2-ui} and $\E[f(Y)^2]<\infty$, we get that $\sup_{\theta \in C}|\nabla \Phi(\theta,Z)|$ and  $\sup_{\theta \in C}|\nabla^2 \Phi(\theta,Z)|$ are integrable. Besides, the matrix
  \begin{align}
    \label{eq:Gamma_K}
    \Gamma^{K} = \E\left[ \left( \varphi(\t^\s, X) - \inv{K} \sum_{k=1}^K f(Y^{(k)}) \right)^2 \nabla \varphi(\t^\s, X)  \nabla \varphi(\t^\s, X)^T  \right].
  \end{align}
  is well defined since $\E\left[\left( \varphi(\t^\s, X) - \inv{K} \sum_{k=1}^K f(Y^{(k)}) \right)^2 | \nabla \varphi(\t^\s, X)|^2 \right]<\infty$, and we get~\eqref{conv_thetaKN} following the result from~\cite[Theorem A2, p.~74]{ShRu93}.

  Let us check that $\Gamma^{K} =A+B/K$. We have
\begin{align*}
  \Gamma^{K} & = \frac{1}{K^2} \E\left[ \E\left[\left( \sum_{k=1}^K \varphi(\t^\s, X) -  f(Y^{(k)}) \right)^2 | X \right] \nabla \varphi(\t^\s, X)  \nabla \varphi(\t^\s, X)^T  \right] \\
  & = \frac{1}{K} \E\left[ \E\left[(\varphi(\t^\s, X) -  f(Y))^2 | X \right] \nabla \varphi(\t^\s, X) \nabla \varphi(\t^\s, X)^T \right] \\
  & \quad + \frac{K-1}{K} \E\left[ \left(\varphi(\t^\s, X) - \E[f(Y) | X] \right)^2 \nabla \varphi(\t^\s, X) \nabla \varphi(\t^\s, X)^T \right] \\
  & =  \E\left[ (\varphi(\t^\s, X) - \E[f(Y) | X])^2 \nabla \varphi(\t^\s, X) \nabla \varphi(\t^\s, X)^T  \right]
  \\ & \quad + \frac{1}{K} \E\left[ \left(f(Y)- \E[f(Y) | X] \right)^2 \nabla \varphi(\t^\s, X) \nabla \varphi(\t^\s, X)^T \right]=A+\frac{B}K.
\end{align*}

Last, we have  $v^\infty(\t)-v^\infty(\t^\star)=\frac 12 (\t-\t^\star)^T H(\t-\t^\star) +|\t-\t^\star|^2\varepsilon(|\t-\t^\star|)$ with $\varepsilon(h)\to 0$ as $h\to 0$. By Slutsky's theorem, Proposition~\ref{prop:slln} and~\eqref{conv_thetaKN}, we get~\eqref{asymp_vinf}.
\end{proof}

\begin{remark} Note that the matrix~$A$ defined by~\eqref{def_A}
  corresponds to the asymptotic variance of the optimal regression that we would obtain if we
  could directly sample $\E[f(Y)|X]$. The additional term~$B/K$ in the
  decomposition of $\Gamma^{K}$ is the extra variance generated by
  the Monte Carlo approximation of $\E[f(Y)|X]$.
\end{remark}

Unless in very specific cases where the function $v^\infty$ is explicit, it is impossible in practice to numerically evaluate $N(v^\infty(\t^K_N)-v^\infty(\t^\s))$.  The next proposition shows that $N(v^K_N(\t^\s)-v^K_N(\t^K_N))$ has the same asymptotics as~\eqref{asymp_vinf}. Roughly speaking, the suboptimality of $\t^\s$ for $v^K_N$ is of the same order as the suboptimality of $\t^K_N$ for $v^\infty$. This result will be used in the numerical section~\ref{Sec_num} to illustrate the convergence. 

\begin{proposition}\label{prop:asymp} Under the same assumptions as in Proposition~\ref{prop:tcl} and if $C$ is convex, we have
  $$N(v^K_N(\t^\s)-v^K_N(\t^K_N))\xrightarrow[N \to \infty]{\cl}
  2 G^THG, \text{ with } G\sim\cn(0, H^{-1} \Gamma^{K}  H^{-1}). $$
\end{proposition}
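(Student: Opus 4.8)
The plan is to reduce the statement to the central limit theorem already established in Proposition~\ref{prop:tcl} by means of a second-order Taylor expansion of $v_N^K$ at its own minimizer. Since $\t^\s \in \mathring{C}$ and $\t_N^K \to \t^\s$ a.s. by Proposition~\ref{prop:slln}, for $N$ large enough $\t_N^K$ lies in the interior of $C$, so that the first-order optimality condition $\nabla v_N^K(\t_N^K) = 0$ holds. The map $v_N^K = \inv{N}\sum_{i=1}^N \Phi(\cdot, Z_i)$ is a.s. $C^2$ on $C$ as a finite average of the $C^2$ maps $\Phi(\cdot, Z_i)$, and since $C$ is convex the segment $[\t_N^K, \t^\s]$ stays in $C$. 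Taylor's formula with Lagrange remainder then provides a (random) point $\xi_N$ on this segment such that
\[
v_N^K(\t^\s) = v_N^K(\t_N^K) + \tfrac12 (\t^\s - \t_N^K)^T \nabla^2 v_N^K(\xi_N) (\t^\s - \t_N^K),
\]
the linear term vanishing by optimality. Hence
\[
N\bigl(v_N^K(\t^\s) - v_N^K(\t_N^K)\bigr) = \tfrac12 \bigl(\sqrt{N}(\t_N^K - \t^\s)\bigr)^T \nabla^2 v_N^K(\xi_N) \bigl(\sqrt{N}(\t_N^K - \t^\s)\bigr).
\]

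Next I would show that $\nabla^2 v_N^K(\xi_N) \to H$ a.s. The Hessian $\nabla^2 \Phi(\cdot, Z)$ is continuous in $\theta$ on $C$ and, as already checked in the proof of Proposition~\ref{prop:tcl}, $\sup_{\theta \in C}\abs{\nabla^2\Phi(\theta, Z)}$ is integrable. The uniform law of large numbers (Lemma~\ref{lem:ulln}) therefore gives $\sup_{\theta \in C}\abs{\nabla^2 v_N^K(\theta) - \nabla^2 v^K(\theta)} \to 0$ a.s., where $\nabla^2 v^K(\theta) = \E[\nabla^2\Phi(\theta, Z)]$, differentiation under the expectation being licensed by the same domination. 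Since $v^K$ and $v^\infty$ differ only by a constant (Lemma~\ref{lem_vk_vinfty}), $\nabla^2 v^K$ is continuous with $\nabla^2 v^K(\t^\s) = \nabla^2 v^\infty(\t^\s) = H$. As $\xi_N$ lies between $\t_N^K$ and $\t^\s$ and $\t_N^K \to \t^\s$ a.s., we have $\xi_N \to \t^\s$ a.s., and combining the uniform convergence with the continuity of $\nabla^2 v^K$ yields $\nabla^2 v_N^K(\xi_N) \to H$ a.s.

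Finally I would conclude with Slutsky's theorem. By Proposition~\ref{prop:tcl}, $\sqrt{N}(\t_N^K - \t^\s) \xrightarrow[N\to\infty]{\cl} W$ with $W \sim \cn(0, 4 H^{-1}\Gamma^{K} H^{-1})$, while $\nabla^2 v_N^K(\xi_N) \to H$ in probability. Joint convergence to $(W, H)$ holds because the second coordinate converges to a constant, and applying the continuous map $(w, M) \mapsto \tfrac12 w^T M w$ gives $N(v_N^K(\t^\s) - v_N^K(\t_N^K)) \xrightarrow[N\to\infty]{\cl} \tfrac12 W^T H W$. Writing $W = 2G$ with $G \sim \cn(0, H^{-1}\Gamma^{K} H^{-1})$ yields $\tfrac12 W^T H W = 2 G^T H G$, which is exactly the claim.

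The only delicate point is the control of the Hessian at the random, $N$-dependent point $\xi_N$: one cannot merely invoke a pointwise law of large numbers there, and the argument genuinely requires the uniform law of large numbers together with the domination from~\ref{hyp:C2-ui}. The convexity of $C$ is what makes the Taylor expansion along a segment contained in $C$ legitimate, and the interiority of $\t^\s$ is what guarantees the vanishing of the gradient term for large $N$; both hypotheses are used in an essential way.
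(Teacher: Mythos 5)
Your proof is correct and follows essentially the same route as the paper: a second-order Taylor expansion of $v_N^K$ at $\t_N^K$ along the segment to $\t^\s$ (kept inside $C$ by convexity), the uniform law of large numbers applied to $\nabla^2 v_N^K$ to identify the limiting Hessian $H$, and Slutsky's theorem combined with Proposition~\ref{prop:tcl}. The only cosmetic difference is that you use the Lagrange form of the remainder at an intermediate point $\xi_N$ where the paper uses the integral form, and you are more explicit than the paper about why the gradient term vanishes (eventual interiority of $\t_N^K$); both points are handled correctly.
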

\begin{proof}
  We have by Taylor's theorem
\begin{equation}\label{Taylor_hess} v^K_N(\t^\s)-v^K_N(\t^K_N)= (\t^K_N-\t^\s)^T  \left( \int_0^1(1-u) \nabla^2v^K_N\left(\t^K_N+u (\t^K_N-\t^\s) \right) du \right) (\t^K_N-\t^\s),
\end{equation}
  with
  $$ \nabla^2v^K_N(\theta)=\frac 2N \sum_{i=1}^N \nabla \varphi(\theta,X_i)\nabla \varphi(\theta,X_i)^T +\left(\varphi(\theta,X_i)-\frac 1K \sum_{k=1}^K f(Y_i^{(k)})\right) \nabla^2\varphi(\theta,X_i).$$
By Lemma~\ref{lem_vk_vinfty}, we have $$ \nabla^2v^K(\theta)= \nabla^2v^\infty(\theta)= 2\E\left[  \nabla \varphi(\theta,X)\nabla \varphi(\theta,X)^T +\left(\varphi(\theta,X)-\E[f(Y)|X]\right) \nabla^2\varphi(\theta,X)\right].$$
By~\ref{hyp:ui},~\ref{hyp:continuous} and~\ref{hyp:C2-ui}, we can apply~\cite[Lemma A1 p.~67]{ShRu93} and get that $\sup_{\t \in C} |\nabla^2v^K_N(\theta)- \nabla^2v^\infty(\theta)| \underset{N\to \infty}\to 0$, almost surely. Since $\t^\s,\t^K_N\in C$ and $C$ is convex, we get $ \int_0^1(1-u) \nabla^2v^K_N\left(\t^K_N+u (\t^K_N-\t^\s) \right) du - \int_0^1(1-u) \nabla^2v^\infty\left(\t^K_N+u (\t^K_N-\t^\s) \right) du \to 0$, almost surely. Since $\nabla^2v^\infty$ is bounded on~$C$, we get
$$\int_0^1(1-u) \nabla^2v^K_N\left(\t^K_N+u (\t^K_N-\t^\s) \right) du \underset{N\to \infty} \to H=\nabla^2v^K(\t^\s), a.s.,$$
by using Proposition~\ref{prop:slln}.  This  gives  that $N(v^K_N(\t^\s)-v^K_N(\t^K_N))$ converges in law to $2G^THG$ by using~\eqref{Taylor_hess}, Proposition~\ref{prop:tcl} and Slutsky's theorem.
\end{proof}

\section{Main results}\label{Sec_Main}
%\subsection{Computational time \emph{vs} accuracy}

In this section, we present our main theorem that determines the optimal allocation between~$N$ and $K$ to approximate the conditional expectation. Let us denote the computational time for sampling $X$ and $\cl(Y | X)$  respectively by $C_X$ and $C_{Y | X}$. With these notations, the cost for computing $v_N^K$ is proportional to $N C_X + N K C_{Y | X}$. Without loss of generality, we will assume that $C_X=1$. This means that the computational time for sampling $X$ is one unit and that we express all the other computational times with respect to this unit.

We now discuss the computational cost of calculating $\theta^K_N$ by using a gradient descent type method. Let us observe from the definition of $v^K_N$ in Equation~\eqref{eq:vKN} that  $\left(\inv{K} \sum_{k=1}^K f(Y_i^{(k)})\right)_{1\le i\le N}$ can be computed  once and for all. Then, the gradient descent applied to~$v^K_N$ has exactly the same computational cost as the one applied to $v^1_N$. This is why we do not include the cost of calculating $\theta^K_N$ in our reasoning and only focus on the computational cost of $v^K_N$.

\subsection{Optimal allocation between \texorpdfstring{$N$}{N} and \texorpdfstring{$K$}{K}}

\begin{definition}\label{def_nu} For $x>0$, we denote by $\nu(x) \in \N^*$ the unique natural number such that $$(\nu(x)-1)\nu(x) <x\le \nu(x)(\nu(x)+1).$$
\end{definition}
It is easy to check that $\forall x>0,  \lfloor \sqrt{x} \rfloor  \le \nu(x)\le \lceil \sqrt{x} \rceil. $ Now, we state our main result.

\begin{theorem}\label{thm:optimK}
  Under the assumptions of Proposition~\ref{prop:tcl} and if the sequence $N(v^\infty(\t^K_N)- v^\infty(\t^\star))_{N\ge 1}$ is uniformly integrable, we have
  $$\E[v^\infty(\t^K_N)] = v^\infty(\t^\star)+\frac{\tr(\Gamma^{K}H^{-1})}N+o(1/N),$$
  as $N\to \infty$. If $A\not =0$, the asymptotic optimal choice minimizing $\E[v^\infty(\t^K_N)]$ for a computational budget $c\to \infty$ is to take
  $$N^\star= \left \lfloor \frac{c }{ 1 + K^\star C_{Y|X}} \right\rfloor, \  K^\star= \nu\left(\frac{\tr(BH^{-1})}{C_{Y|X}\tr(AH^{-1})} \right).$$
\end{theorem}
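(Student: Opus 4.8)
The plan is to treat the two assertions separately: first the asymptotic expansion of $\E[v^\infty(\t^K_N)]$, then the minimisation of the allocation $(N,K)$ under the budget constraint.

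\textbf{Step 1 (the expansion).} By Proposition~\ref{prop:tcl}, the sequence $N(v^\infty(\t^K_N)-v^\infty(\t^\star))$ converges in law to $2G^THG$ with $G\sim\cn(0,H^{-1}\Gamma^K H^{-1})$. Convergence in law only controls bounded continuous test functions, so to reach the mean I would invoke the hypothesis that $N(v^\infty(\t^K_N)-v^\infty(\t^\star))_{N\ge1}$ is uniformly integrable: uniform integrability together with convergence in law yields convergence of the expectations, i.e. $\E[N(v^\infty(\t^K_N)-v^\infty(\t^\star))]\to \E[2G^THG]$. It then remains to evaluate the mean of this Gaussian quadratic form via the identity $\E[G^TMG]=\tr(M\Sigma)$, valid for $G\sim\cn(0,\Sigma)$; with $M=H$ and $\Sigma=H^{-1}\Gamma^K H^{-1}$ this reduces to $\tr(HH^{-1}\Gamma^K H^{-1})=\tr(\Gamma^K H^{-1})$, which is the claimed leading coefficient. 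Dividing by $N$ and absorbing the remainder into $o(1/N)$ gives the first display.

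\textbf{Step 2 (reduction of the allocation problem).} Write $a=\tr(AH^{-1})$ and $b=\tr(BH^{-1})$; since $A,B$ are positive semi-definite and $H^{-1}$ is positive definite one has $a\ge 0$, $b\ge0$, and $a>0$ exactly because $A\neq0$ (as $\tr(AH^{-1})=\tr(H^{-1/2}AH^{-1/2})=0$ would force $A=0$). Using $\Gamma^K=A+B/K$ and linearity of the trace, the suboptimality from Step~1 reads $\E[v^\infty(\t^K_N)]-v^\infty(\t^\star)=\frac{a+b/K}{N}+o(1/N)$. For a fixed $K$, the best admissible choice under $N(1+KC_{Y|X})\le c$ is to take $N$ as large as possible, namely $N=\lfloor c/(1+KC_{Y|X})\rfloor$; substituting, the leading term becomes $g(K)/c$ with
$$g(K)=\left(a+\frac bK\right)(1+KC_{Y|X})=a+bC_{Y|X}+aC_{Y|X}K+\frac bK.$$
Note that the overall multiplicative constant carried by the quadratic form in Step~1 is irrelevant here: it rescales $a$ and $b$ by the same factor and thus cancels both in the ratio defining the optimal $K$ and in the budget-saturating choice of $N$. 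The one point requiring care is that the per-$K$ expansion holds only as $N\to\infty$ with a $K$-dependent remainder, so to minimise $g$ legitimately I must keep the competing values of $K$ bounded as $c\to\infty$; this is guaranteed because $a>0$ forces $g(K)\sim aC_{Y|X}K\to\infty$, so any near-optimal $K$ stays in a fixed finite range on which $N=\lfloor c/(1+KC_{Y|X})\rfloor\to\infty$ and the $o(1/N)$ error is uniformly negligible.

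\textbf{Step 3 (the integer minimisation).} Only the term $aC_{Y|X}K+b/K$ of $g$ depends on $K$, so I would examine the consecutive difference
$$g(K+1)-g(K)=aC_{Y|X}-\frac{b}{K(K+1)}=\frac{aC_{Y|X}}{K(K+1)}\bigl(K(K+1)-x\bigr),\qquad x:=\frac{b}{aC_{Y|X}}=\frac{\tr(BH^{-1})}{C_{Y|X}\tr(AH^{-1})}.$$
This step is negative while $K(K+1)<x$ and nonnegative once $K(K+1)\ge x$, so $g$ first decreases then increases, and its integer minimiser is the unique $K^\star$ with $(K^\star-1)K^\star<x\le K^\star(K^\star+1)$, which is exactly $K^\star=\nu(x)$ by Definition~\ref{def_nu} (the boundary case $x=m(m+1)$ produces a harmless tie between $m$ and $m+1$, and $\nu(x)=m$ is the smaller of the two minimisers). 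The associated $N^\star=\lfloor c/(1+K^\star C_{Y|X})\rfloor$ then follows from the budget-saturation of Step~2, completing the proof. I expect the main obstacle to be precisely the uniformity issue flagged in Step~2: since the expansion of Step~1 is only pointwise in $K$, one must rule out the optimal allocation drifting towards $K$ growing with $c$ before the clean discrete optimisation of Step~3 applies; everything else is the standard trace identity and a one-line monotonicity argument in the discrete variable.
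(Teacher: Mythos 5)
Your proof is correct and follows essentially the same route as the paper: uniform integrability combined with the convergence in law from Proposition~\ref{prop:tcl} yields the mean expansion via the Gaussian trace identity $\E[G^THG]=\tr(\Gamma^K H^{-1})$, and the allocation problem reduces to minimizing $(a+b/K)(1+KC_{Y|X})$ over integers, which the paper delegates to Lemma~\ref{lem:optim} whose comparison of $p$ and $p+1$ (namely $z^2\le p(p+1)$) is exactly your consecutive-difference test on $K(K+1)$ against $x$. Your extra remark that near-optimal values of $K$ stay bounded as $c\to\infty$ (so the $K$-dependent $o(1/N)$ remainders are uniformly negligible) addresses a point the paper passes over with ``for a large enough budget'', and is a welcome refinement rather than a deviation.
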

Note that if $A=0$ and $\P(\nabla \varphi(\t^\s,X) \not = 0)>0$, then $\varphi(\t^\s,X)=\E[f(Y)|X]$. The condition $A\not=0$ ensures that $\tr(AH^{-1})>0$.
\begin{proof}
 We have  by Proposition~\ref{prop:tcl}, $N(v^\infty(\t^K_N)-v^\infty(\t^\star))\xrightarrow[N \to \infty]{\cl}  2 G^THG$ with  $G\sim\cn(0, H^{-1} \Gamma^{K}  H^{-1})$.
From this convergence in distribution and the uniform integrability assumption, we get  $\E[N(v^\infty(\t^K_N)-v^\infty(\t^\star))]\to 2  \E[G^THG]$. Let $C=\sqrt{H^{-1} \Gamma^{K} H^{-1}}$. Then, $G$ has the same law as $C\tilde{G}$ with $\tilde{G}\sim \cn(0,I_q)$ and thus   $\E[G^THG]=\E[\tilde{G}^T CHC\tilde{G} ]=\tr(CHC)=\tr(HC^2)=\tr(\Gamma^{K} H^{-1})$, which gives the first claim.

As $N\to \infty$,  the minimization of $\E[v^\infty(\t^K_N)]$ with respect to~$K$ amounts to minimizing $\tr(\Gamma^{K}H^{-1})$ with respect to~$K$. For a large enough budget $c$, the problem becomes
  \begin{equation*}
    \inf_{\substack{
      N, K \, \in \, \N \\
      s.t. \, N  + N K C_{Y | X} = c} } \frac{\tr(AH^{-1})}{N} + \frac{\tr(BH^{-1})}{KN}.
  \end{equation*}
  Then, we apply Lemma~\ref{lem:optim} to get the claim.
\end{proof}

\begin{remark}\label{rk_KnoH}
  Theorem~\ref{thm:optimK} gives the asymptotic optimal allocation to minimize $\E[v^\infty(\t^K_N)]$. Unfortunately, it involves the matrix $H$ which is in general unknown and may be difficult to estimate. When $\theta$ is a one dimensional parameter, $A$, $B$ and $H$ are scalar values and thus $K^\star= \nu\left(\frac{B}{C_{Y|X}A} \right)$. Otherwise, since $H$ is a definite positive matrix, we have $\underline{\lambda}_H I_q\le H\le \overline{\lambda}_H I_q$ and thus
  $$ \overline{\lambda}_H^{-1}\tr(\Gamma^{K})\le \tr(\Gamma^{K}H^{-1})\le \underline{\lambda}_H^{-1}\tr(\Gamma^{K}).$$
Therefore, it is reasonable (though not optimal) to minimize  $\tr(\Gamma^{K})$ under the same computational budget constraint, which then leads to
 $$N'= \left \lfloor \frac{c }{ 1 + K' C_{Y|X}} \right\rfloor, \  K'= \nu\left(\frac{\tr(B)}{C_{Y|X}\tr(A)} \right).$$
\end{remark}

The next corollary gives a bound on the computational gain that can be obtained by the optimization of~$K$ given by Theorem~\ref{thm:optimK}.
\begin{corollary}[Comparison of the estimators $\t^{K^\s}_{N^\s}$ and $\t^1_N$ for a fixed computational budget] \label{cor:gain} Let $c$ be the computational budget. Under the assumptions of Theorem~\ref{thm:optimK} and with $N=\lfloor c/(1+C_{Y|X})\rfloor$, we have $$\E[v^\infty(\t^{K^\s}_{N^\s})]-v^\infty(\t^\s)\sim_{c\to \infty} r^\s(\E[v^\infty(\t^{1}_{N})]-v^\infty(\t^\s)),$$ with
 \begin{equation}\label{def_rs}r^\s=\frac{(1+\nu(\xi) C_{Y|X})\left(1+\frac{\xi}{\nu(\xi)} C_{Y|X}\right)}{(1+ C_{Y|X})(1+\xi C_{Y|X})}
  %  1-\frac{1}{1+C_{Y|X}}\frac{1+\xi-\frac \xi{\nu(\xi)} -\nu(\xi) }{1+\xi C_{Y|X}}
  , \ \xi=\frac{\tr(BH^{-1})}{C_{Y|X} \tr(AH^{-1})}.
    \end{equation}
  This  multiplicative gain $r^\s \in (0,1]$  satisfies $r^\s\ge \frac{C_{Y|X}}{1+C_{Y|X}}$ and $\lim_{\xi \to \infty}r^\s = \frac{C_{Y|X}}{1+C_{Y|X}}$.
\end{corollary}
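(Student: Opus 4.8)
The plan is to combine the first-order asymptotic of $\E[v^\infty(\t^K_N)]$ supplied by Theorem~\ref{thm:optimK} for the two allocations $(K^\s,N^\s)$ and $(1,N)$, form their quotient, and then verify the three elementary properties of the resulting scalar $r^\s$ using only the defining inequalities of~$\nu$. First I would abbreviate $a=\tr(AH^{-1})$ and $b=\tr(BH^{-1})$, so that $\tr(\Gamma^K H^{-1})=a+b/K$ by~\eqref{Gamma_K_expr} and $\xi=b/(C_{Y|X}a)$, with $a>0$ since $A\neq 0$. Because $K^\s=\nu(\xi)$ is a fixed integer, both $N^\s=\lfloor c/(1+K^\s C_{Y|X})\rfloor$ and $N=\lfloor c/(1+C_{Y|X})\rfloor$ tend to infinity with $c$ and satisfy $N^\s\sim c/(1+K^\s C_{Y|X})$ and $N\sim c/(1+C_{Y|X})$. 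Substituting these into $\E[v^\infty(\t^K_N)]-v^\infty(\t^\s)=\tr(\Gamma^K H^{-1})/N+o(1/N)$ yields
$$\E[v^\infty(\t^{K^\s}_{N^\s})]-v^\infty(\t^\s)\sim_{c\to\infty}\frac{(1+K^\s C_{Y|X})(a+b/K^\s)}{c}, \quad \E[v^\infty(\t^{1}_{N})]-v^\infty(\t^\s)\sim_{c\to\infty}\frac{(1+C_{Y|X})(a+b)}{c}.$$

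Taking the ratio, the factor $c$ cancels; substituting $b=\xi C_{Y|X}a$ turns $a+b/K^\s$ into $a(1+\frac{\xi}{\nu(\xi)}C_{Y|X})$ and $a+b$ into $a(1+\xi C_{Y|X})$, and the common factor $a>0$ cancels, producing exactly the expression~\eqref{def_rs} for~$r^\s$. It then remains to establish $r^\s\in(0,1]$, the lower bound, and the limit. Writing $n=\nu(\xi)$ and $C=C_{Y|X}$, positivity is immediate as all factors are positive. For $r^\s\le 1$ I would show $(1+nC)(1+\frac{\xi}{n}C)\le(1+C)(1+\xi C)$; expanding and dividing by $C>0$, this reduces to $\frac{\xi}{n}+n\le\xi+1$, i.e., after clearing the denominator~$n$, to $(1-n)(\xi-n)\le 0$. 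This is precisely where the characterization $n(n-1)<\xi\le n(n+1)$ of~$\nu$ enters: for $n=1$ it is an equality, while for $n\ge 2$ one has $\xi>n(n-1)\ge n$, so in both cases the two factors have opposite signs.

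For the lower bound I would instead show $(1+nC)(1+\frac{\xi}{n}C)\ge C(1+\xi C)$; expanding, the difference equals $1-C+C(n+\xi/n)$, which is positive because $n+\xi/n>1$ (since $n\ge 1$), whether $C$ is smaller or larger than one. The limit follows from $\nu(\xi)\sim\sqrt\xi$, a consequence of $\lfloor\sqrt\xi\rfloor\le\nu(\xi)\le\lceil\sqrt\xi\rceil$: then both $1+nC$ and $1+\frac{\xi}{n}C$ behave like $\sqrt\xi\,C$ and the numerator like $\xi C^2$, while the denominator behaves like $(1+C)\xi C$, giving $r^\s\to C/(1+C)$.

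The only genuine subtlety, and hence the main obstacle, is the passage from the floor expressions to the asymptotic equivalences above: one must ensure that replacing $N^\s$ and $N$ by $c/(1+K^\s C_{Y|X})$ and $c/(1+C_{Y|X})$ is legitimate, which requires the $o(1/N)$ remainder of Theorem~\ref{thm:optimK} together with the fact that $K^\s$ does not depend on~$c$. Since $K^\s=\nu(\xi)$ is a fixed integer, $\lfloor x\rfloor\sim x$ as $x\to\infty$ settles this at once, and all the remaining work is the elementary algebra described, whose single nontrivial ingredient is the sharp defining property of~$\nu$ used to secure $r^\s\le 1$.
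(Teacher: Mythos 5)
Your proposal is correct and follows essentially the same route as the paper: apply the expansion $\E[v^\infty(\t^K_N)]-v^\infty(\t^\s)=\tr(\Gamma^KH^{-1})/N+o(1/N)$ from Theorem~\ref{thm:optimK} to both allocations, take the ratio using $c\sim N(1+C_{Y|X})\sim N^\s(1+K^\s C_{Y|X})$, and then check $r^\s\le 1$ via $\nu(\xi)+\xi/\nu(\xi)\le 1+\xi$, the lower bound via $\nu(\xi)\ge 1$, and the limit via $\nu(\xi)\sim\sqrt{\xi}$. Your verifications (e.g.\ the factorization $(1-n)(\xi-n)\le 0$ from the defining property of $\nu$) just spell out the ``simple calculations'' the paper leaves implicit.
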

Note that if $C_{Y|X}=1$, i.e. the computation time of sampling $\mathcal{L}(Y|X)$ is the same as the one of sampling~$X$, we cannot reduce the computation time more than by a factor~$1/2$. Besides, the smaller is $C_{Y|X}$, the more we may hope a significant reduction of computational time, and this really occurs if $\xi$ is large, so that $r^\s \approx \frac{C_{Y|X}}{1+C_{Y|X}}$.
\begin{proof}
Since we are comparing $\t^1_N$ and $\t^{K^\s}_{N^\s}$ for the same computation budget, we have $c\sim_{c\to \infty} N(1+C_{Y|X})\sim_{c\to \infty} N^\s(1+K^\s C_{Y|X})$.
  By Theorem~\ref{thm:optimK}, the multiplicative gain in precision is
  $$r^\s=\frac{\tr(\Gamma^{K^\s}H^{-1})/N^\s}{\tr(\Gamma^{1}H^{-1})/N} \underset{c \to \infty}\to\frac{\tr(\Gamma^{K^\s}H^{-1})}{\tr(\Gamma^{1}H^{-1})}\times\frac{1+K^\s C_{Y|X}}{1+ C_{Y|X}}. $$
Since $\Gamma^{K}=A+B/K$ and $K^\s=\nu(\xi)$, we get~\eqref{def_rs} after simple calculations. We have $r^\s\le 1$ since $\nu(\xi)+\frac \xi{\nu(\xi)}\le 1+\xi$, 
$r^\s\ge \frac{C_{Y|X}}{1+C_{Y|X}}$  since $\nu(\xi)\ge 1$ and $\lim_{\xi \to \infty}r^\s = \frac{C_{Y|X}}{1+C_{Y|X}}$ since $\nu(\xi)\sim_{\xi \to \infty}\sqrt{\xi}$. 
\end{proof}
\begin{remark}\label{rk_rK}
  By the same reasoning as in the proof of Corollary~\ref{cor:gain}, we can define
  \begin{equation}\label{def_rK}
    r^K=\frac{\tr(\Gamma^{K}H^{-1})}{\tr(\Gamma^{1}H^{-1})}\times\frac{1+K C_{Y|X}}{1+ C_{Y|X}}
  \end{equation}
  as the multiplicative gain resulting from using $\t^K_N$ instead of $\t^1_N$. Note that this is indeed a gain if $r^K<1$ and that we have $r^\s=r^{K^\s}$. 
\end{remark}

\subsection{Estimation of the matrices \texorpdfstring{$A$}{A} and \texorpdfstring{$B$}{B}}
\label{sec:hat_A_B}
In practice, to calculate the value of $K^\s$ given by Theorem~\ref{thm:optimK}, we need to estimate the
matrices $A$ and $B$. Let   $(X_i, Y_i^{(1)},\dots, Y_i^{(K)})_i$ be iid samples
such that for all $i$, $X_i \sim X$ and
$Y_i^{(k)} \sim \cl(Y | X = X_i)$ for $k=1,\dots,K$ being sampled independently given~$X_i$. From these samples, we
can compute $\theta_N^K$ and define
\begin{equation}
  \label{eq:Gamma_hat}
  \hat \Gamma_N^{K} = \frac{1}{N} \sum_{i=1}^N \left( \varphi(\t^{K}_N, X_i) - \inv{K} \sum_{k=1}^K f(Y_i^{(k)}) \right)^2 \nabla \varphi(\t_N^{K}, X_i)  \nabla \varphi(\t_N^{K}, X_i)^T.
\end{equation}

\begin{proposition}
  \label{prop:GammaKN}
  Assume~\ref{hyp:ui},~\ref{hyp:continuous},~\ref{hyp:unique},~\ref{hyp:C2-ui} and
  \begin{align}\label{integrab}
      \E\left[ \sup_{\t \in  C} \abs{\varphi(\t, X) \nabla \varphi(\t, X)}^2\right] < \infty, \quad  \E\left[f(Y)^2 \sup_{\t \in C} \abs{\nabla \varphi(\t, X)}^2 \right] < \infty.
    \end{align}
Then, we have $\hat \Gamma_N^{K} \underset{N\to \infty} \to \Gamma^{K}$ almost surely.
\end{proposition}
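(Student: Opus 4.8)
The plan is to write both $\hat\Gamma_N^K$ and $\Gamma^K$ as averages of a common matrix-valued map and to control the data-dependence of~$\t_N^K$ through a uniform law of large numbers. Introduce
$$\Psi(\t, Z) = \left( \varphi(\t, X) - \inv{K} \sum_{k=1}^K f(Y^{(k)}) \right)^2 \nabla \varphi(\t, X) \nabla \varphi(\t, X)^T, \quad Z = (X, Y^{(1)}, \dots, Y^{(K)}),$$
so that $\hat\Gamma_N^K = \inv{N} \sum_{i=1}^N \Psi(\t_N^K, Z_i)$ while $\Gamma^K = \E[\Psi(\t^\s, Z)]$ by~\eqref{eq:Gamma_K}. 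The difficulty---and the reason a plain strong law of large numbers does not apply---is that $\t_N^K$ depends on all the samples $Z_1, \dots, Z_N$, so the summands are not i.i.d.\ functions of the data. The standard remedy is to decouple the estimation of~$\t$ from the averaging by proving a uniform (in~$\t$) law of large numbers and then plugging in~$\t_N^K$.

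First I would check the hypotheses of the uniform law of large numbers of Lemma~\ref{lem:ulln} (equivalently~\cite[Lemma A1 p.~67]{ShRu93}) for~$\Psi$. Almost sure continuity of $\t \mapsto \Psi(\t, Z)$ on~$C$ follows from~\ref{hyp:continuous} and the continuity of $\nabla\varphi$ granted by~\ref{hyp:C2-ui}. The integrability condition $\E[\sup_{\t \in C} \abs{\Psi(\t, Z)}] < \infty$ is exactly where the extra assumption~\eqref{integrab} enters. Expanding the square and using the elementary bounds $\abs{\nabla\varphi\,\nabla\varphi^T} \le \abs{\nabla\varphi}^2$ and $\left(\inv K \sum_k f(Y^{(k)})\right)^2 \le \inv K \sum_k f(Y^{(k)})^2$, one bounds $\sup_{\t} \abs{\Psi(\t, Z)}$ by a constant times
$$\sup_{\t \in C} \abs{\varphi(\t, X) \nabla\varphi(\t, X)}^2 + \inv K \sum_{k=1}^K f(Y^{(k)})^2 \sup_{\t \in C} \abs{\nabla\varphi(\t, X)}^2,$$
the cross term being absorbed by these two via Young's inequality. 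Taking expectations, the first part of~\eqref{integrab} controls the first term, while conditioning on~$X$ (so that $\E[f(Y^{(k)})^2 \mid X] = \E[f(Y)^2 \mid X]$ for each~$k$) reduces the second to $\E[f(Y)^2 \sup_{\t \in C} \abs{\nabla\varphi(\t, X)}^2]$, finite by the second part of~\eqref{integrab}. Hence the uniform integrability holds.

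With these hypotheses verified, the uniform law of large numbers yields both the continuity of $\t \mapsto \E[\Psi(\t, Z)]$ on~$C$ and $\sup_{\t \in C} \abs{\inv N \sum_{i=1}^N \Psi(\t, Z_i) - \E[\Psi(\t, Z)]} \to 0$ almost surely. I would then conclude by the triangle inequality: writing
$$\abs{\hat\Gamma_N^K - \Gamma^K} \le \sup_{\t \in C} \abs{\inv N \sum_{i=1}^N \Psi(\t, Z_i) - \E[\Psi(\t, Z)]} + \abs{\E[\Psi(\t_N^K, Z)] - \E[\Psi(\t^\s, Z)]},$$
the first term vanishes by the uniform convergence (since $\t_N^K \in C$), and the second vanishes by the continuity of $\t \mapsto \E[\Psi(\t, Z)]$ combined with $\t_N^K \to \t^\s$ almost surely from Proposition~\ref{prop:slln}. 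The main obstacle is the integrability verification of the second step: the two $L^2$ controls coming from~\ref{hyp:ui} and~\ref{hyp:C2-ui} would only yield an $L^1$ bound on $\sup_\t \abs{\Psi}$ via Cauchy--Schwarz if $\varphi$ and $f(Y)$ were separately in~$L^4$, which is not assumed. Assumption~\eqref{integrab} is precisely the hypothesis that packages the needed products directly, and matching its two terms to the expansion of~$\Psi$ is the crux of the argument.
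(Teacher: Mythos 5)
Your proof is correct and follows essentially the same route as the paper: define the matrix-valued map $g_K(\t,\cdot)$ (your $\Psi$), verify continuity and the domination $\E[\sup_{\t\in C}|g_K|]<\infty$ from~\eqref{integrab}, apply the uniform law of large numbers of Lemma~\ref{lem:ulln}, and conclude by plugging in $\t_N^K\to\t^\s$ from Proposition~\ref{prop:slln}. Your explicit verification of the integrability bound (splitting the square and conditioning on $X$ to reduce $\E[f(Y^{(k)})^2\sup_\t|\nabla\varphi|^2]$ to the second term of~\eqref{integrab}) is a detail the paper leaves implicit, and it is carried out correctly.
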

\begin{proof}%[of Proposition~\ref{prop:GammaKN}]
  We define the function $g_K: \R^q \times \R^d \times (\R^p)^K \to \R^{q \times q}$ by
  \begin{equation*}
    g(\t, x, (y^{(k)})_{1 \le k \le K})= \left( \varphi(\t, x) - \inv{K} \sum_{k=1}^K f(y^{(k)}) \right)^2 \nabla \varphi(\t, x)  \nabla \varphi(\t, x)^T.
  \end{equation*}
  Assumptions~\ref{hyp:ui},~\ref{hyp:continuous},~\ref{hyp:C2-ui} ensure that the function $\t \mapsto g_K(\t, X, (Y^{(k)})_{1 \le k \le K})$ is a.s. continuous, while Assumption~\eqref{integrab} gives the integrability of $\sup_{\t \in C}|g_K(\t, X, (Y^{(k)})_{1 \le k \le K})|$.  From Lemma~\ref{lem:ulln}, we get that $$ \sup_{\t \in C}\left|\frac{1}{N} \sum_{i=1}^N g_K(\t, X_i, (Y_i^{(k)})_{1 \le k \le K})-\E[g_K(\t, X, (Y^{(k)})_{1 \le k \le K})]\right| \to 0, \ a.s.$$  From Proposition~\ref{prop:slln}, $\t_N^K \to \t^\s$ a.s.  Hence, we deduce that $\hat{\Gamma}_N^{K} \to \Gamma^{K}$ a.s.
\end{proof}

\paragraph{Estimators for $A$ and $B$}
From Proposition~\ref{prop:GammaKN} and Equation~\eqref{Gamma_K_expr}, we deduce estimators of $A$ and $B$. For
$K_1,K_2\in \N^*$ such that $K_1 < K_2$,  we have by Proposition~\ref{prop:GammaKN} when $N$ tends to $+\infty$
\[
  \frac{K_2\hat \Gamma_N^{K_2} - K_1\hat \Gamma_N^{K_1}}{K_2-K_1} \to A \; a.s., \
  \frac{K_1K_2\left(\hat \Gamma_N^{K_1} - \hat \Gamma_N^{K_2}\right)}{K_2-K_1}\to B \; a.s.
\]
We will mainly use $K_1=\bar{K}$ and $K_2=2\bar{K}$ for a given $\bar{K}\in \N^*$,  which leads to simpler formulas
\[
  \hat A_{\bar{K}} := 2\hat \Gamma_N^{2\bar{K}} - \hat \Gamma_N^{\bar{K}},   \hat B_{\bar{K}} = 2\bar{K}\left(\hat \Gamma_N^{\bar{K}} - \hat \Gamma_N^{2\bar{K}}\right).
  \]
  Besides, we rather work with the following antithetic estimators:
  \begin{align*}
     \hat A^{anti}_{\bar{K}} &=   \frac{1}{N} \sum_{i=1}^N \Bigg[ 2\left( \varphi(\t^{2\bar{K}}_N, X_i) - \inv{2 \bar{K}} \sum_{k=1}^{2\bar{K}} f(Y_i^{(k)}) \right)^2- \frac 12 \left( \varphi(\t^{2\bar{K}}_N, X_i) - \inv{\bar{K}} \sum_{k=1}^{\bar{K}} f(Y_i^{(k)}) \right)^2\\ & - \frac 12\left( \varphi(\t^{2\bar{K}}_N, X_i) - \inv{\bar{K}} \sum_{k=\bar{K}+1}^{2\bar{K}} f(Y_i^{(k)}) \right)^2   \Bigg]\nabla \varphi(\t_N^{2\bar{K}}, X_i)  \nabla \varphi(\t_N^{2\bar{K}}, X_i)^T  \\
     \hat B^{anti}_{\bar{K}} &= 2\bar{K}\Bigg(  \frac{1}{N} \sum_{i=1}^N \Bigg[ \frac 12 \left( \varphi(\t^{2\bar{K}}_N, X_i) - \inv{\bar{K}} \sum_{k=1}^{\bar{K}} f(Y_i^{(k)}) \right)^2 + \frac 12\left( \varphi(\t^{2\bar{K}}_N, X_i) - \inv{\bar{K}} \sum_{k=\bar{K}+1}^{2\bar{K}} f(Y_i^{(k)}) \right)^2 \\
      &-  \left( \varphi(\t^{2\bar{K}}_N, X_i) - \inv{2\bar{K}} \sum_{k=1}^{2\bar{K}} f(Y_i^{(k)}) \right)^2\Bigg]\nabla \varphi(\t_N^{2\bar{K}}, X_i)  \nabla \varphi(\t_N^{2\bar{K}}, X_i)^T \Bigg).
  \end{align*}
  Note that the same value of $\theta_N^{2\bar{K}}$ is used. Similarly, we have the almost sure convergence of these estimators respectively to $A$ and $B$ as $N\to \infty$. Thanks to the convexity of the square function, $ \hat B^{anti}_{\bar{K}}$ is a semi-definite positive matrix. Unfortunately, the matrix $ \hat A^{anti}_{\bar{K}}$ may not be semi-definite positive. More generally, the matrix $A$ is in general difficult to estimate. From its definition~\eqref{def_A}, we see that the better is the approximation family $\varphi(\theta,X)$, the smaller is the matrix $A$ for the natural order (L\"owner order). Thus, when the conditional expectation is well approximated, the matrix~$A$ is small and may be smaller than the noise in $O(N^{-1/2})$, so that the estimated matrix $\hat A^{anti}_{\bar{K}}$ may have negative eigenvalues. Thus, in practice, we use \begin{equation}\label{def_KAH}
    \hat{K}^A_H=\nu\left(\frac{\tr( \hat B^{anti}_{\bar{K}} \hat{H}^{-1})}{C_{Y|X} \tr( (\hat A^{anti}_{\bar{K}} \hat{H}^{-1})_+)} \right)\end{equation} to approximate $K^\s$. An alternative is to approximate $A$ by $\Gamma^{2\bar{K}}_N$ for a (fixed) large value of $\bar{K}$: it is a nonnegative estimator of $\Gamma=A+B/\bar{K}\ge A$, and therefore \begin{equation}\label{def_KGH} \hat{K}^{\Gamma}_H= \nu\left(\frac{\tr( \hat B^{anti}_{\bar{K}} \hat{H}^{-1})}{C_{Y|X} \tr( \Gamma^{2\bar{K}}_N \hat{H}^{-1})} \right)
  \end{equation}
underestimates $K^\s$. These estimators are discussed and illustrated in the numerical section~\ref{Sec_num}.

\section{The linear regression framework}\label{Sec_linearcase}

In this section, we rephrase some results of Section~\ref{Sec_Main} in the framework of linear regression as they actually take simpler forms. In particular, we show that the uniform integrability assumption of Theorem~\ref{thm:optimK} is always satisfied.

\subsection{Main results for the linear regression framework}
We consider in this section a function $u:\R^d\to \R^q$ such that $\E[|u(X)|^2]<\infty$ and
$$\varphi(\t,X)=\t \cdot u(X), \ \t \in \R^q.$$

In this case, we have $\nabla \varphi(\t,X)= u(X)$, $\nabla^2 \varphi(\t,X)= 0$, $\nabla v^\infty(\t)=2\E[(\theta \cdot u(X) -\E[f(Y)|X] )u(X)]$ and $\nabla^2 v^\infty(\t)=2 \E[u(X) u^T(X)]$ does not depend on~$\theta$. Therefore, Assumptions~\ref{hyp:ui},~\ref{hyp:continuous} and~\ref{hyp:C2-ui} are clearly satisfied for any compact~$C$, while~\ref{hyp:unique} holds if, and only if
\begin{equation}\label{H3_lin}
 H=2 \E[u(X) u^T(X)] \text{ is positive definite and } \t^\s= 2H^{-1}\E[f(Y)u(X)] \in \mathring{C}.\tag{$\mathcal{H}$-3-lin}
\end{equation}
We also get a simpler expression for $\t^K_N$ and $\hat \Gamma_N^{K}$:
\begin{align*}
  \t^{K}_N&=\left(\frac 1 N  \sum_{i=1}^N u(X_i)u(X_i)^T   \right)^{-1} \left( \frac 1 N \sum_{i=1}^N \left( \frac 1 K \sum_{k=1}^K f(Y_i^{(k)})\right)u(X_i) \right), \\
  \hat \Gamma_N^{K} &= \frac{1}{N} \sum_{i=1}^N \left( \t_N^{K} \cdot u(X_i) - \inv{K} \sum_{k=1}^K f(Y_i^{(k)}) \right)^2 u(X_i)  u(X_i)^T.
\end{align*}
Here, we assume that $H$ is positive definite and $N$ is large enough so that $\frac 1 N  \sum_{i=1}^N u(X_i)u(X_i)^T $ is positive definite by the law of large numbers. However, it may be convenient to slightly modify the estimator as in the next proposition. This new estimator satisfies in particular  the uniform integrability assumption of Theorem~\ref{thm:optimK}, as shown in the proof of Proposition~\ref{prop:lin}.
\begin{definition} For a positive semi-definite matrix $S\in \R^{q\times q}$ and $\epsilon\in \R_+^*$,  $S\vee (\epsilon I_q)$ is the positive definite matrix such that   $ (S\vee (\epsilon I_q))e_l=\max(\lambda_l,\epsilon) e_l$, where $(e_l)_{1\le l\le q}$ is an orthonormal basis of eigenvectors with respective eigenvalues $(\lambda_l)_{1\le l\le q}$.
\end{definition}

\begin{proposition}\label{prop:lin}
  We assume~\eqref{H3_lin}, $\E[|u(X)|^{4+\eta}]<\infty$ and $\E[f(Y)^{2+\eta}|u(X)|^{2+\eta}]<\infty$ for some $\eta>0$. Let $\epsilon>0$ be such that $H-2\epsilon I_q$ is positive definite and  define
  $$\t^{K,\epsilon}_N=2\left(\left(\frac 2 N  \sum_{i=1}^N u(X_i)u(X_i)^T\right)\vee (\epsilon I_q) \right)^{-1} \left( \frac 1 N \sum_{i=1}^N \left( \frac 1 K \sum_{k=1}^K f(Y_i^{(k)})\right)u(X_i) \right).$$
  Then, we have $\t^{K,\epsilon}_N\to \t^\s$ a.s., $\sqrt{N} (\t_N^{K,\epsilon} - \t^\s) \xrightarrow[N \to \infty]{\cl} \cn(0, 4 H^{-1} \Gamma^{K} H^{-1})$ and
  $$\E[v^\infty(\t^K_N)] \underset{N\to \infty}{=} v^\infty(\t^\star)+\frac{\tr(\Gamma^{K}H^{-1})}N+o(1/N). $$
  The conclusions of Theorem~\ref{thm:optimK} hold.
\end{proposition}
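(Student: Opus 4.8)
The three assertions to establish are the almost sure convergence, the central limit theorem, and --- together with the expansion of $\E[v^\infty]$ --- the uniform integrability that lets us invoke Theorem~\ref{thm:optimK}. Throughout, set $M_N=\frac2N\sum_{i=1}^N u(X_i)u(X_i)^T$, $v_N=\frac1N\sum_{i=1}^N\bar f_i\,u(X_i)$ with $\bar f_i=\frac1K\sum_{k=1}^K f(Y_i^{(k)})$, and $w=\E[f(Y)u(X)]$, so that $\t^{K,\epsilon}_N=2(M_N\vee\epsilon I_q)^{-1}v_N$ and $\t^\s=2H^{-1}w$. First I would record the two strong laws $M_N\to H$ and $v_N\to w$ almost surely, the latter since $\E[\bar f_1 u(X_1)]=\E[\E[f(Y)|X]u(X)]=w$. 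Because $\epsilon$ is chosen so that $\lambda_{\min}(H)>2\epsilon$, Weyl's inequality forces $M_N\vee\epsilon I_q=M_N$ for $N$ large enough almost surely, whence $\t^{K,\epsilon}_N=2M_N^{-1}v_N\to 2H^{-1}w=\t^\s$.

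For the CLT, the plan is to use the exact identity, obtained from $w=\tfrac12 H\t^\s$,
\[
  \t^{K,\epsilon}_N-\t^\s=2(M_N\vee\epsilon I_q)^{-1}\left[\frac1N\sum_{i=1}^N\xi_i-\tfrac12\big(M_N\vee\epsilon I_q-M_N\big)\t^\s\right],\quad \xi_i=u(X_i)\big(\bar f_i-\t^\s\cdot u(X_i)\big).
\]
The $\xi_i$ are i.i.d. and centred, $\E[\xi_1]=w-\tfrac12 H\t^\s=0$, with covariance $\E[\xi_1\xi_1^T]=\E[(\varphi(\t^\s,X)-\bar f_1)^2 u(X)u(X)^T]=\Gamma^K$ by~\eqref{eq:Gamma_K}. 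Since the correction term vanishes for $N$ large almost surely, combining the multivariate CLT $\frac1{\sqrt N}\sum_i\xi_i\xrightarrow{\cl}\cn(0,\Gamma^K)$ with $(M_N\vee\epsilon I_q)^{-1}\to H^{-1}$ via Slutsky's theorem gives $\sqrt N(\t^{K,\epsilon}_N-\t^\s)\xrightarrow{\cl}\cn(0,4H^{-1}\Gamma^K H^{-1})$. One could instead check that all hypotheses of Proposition~\ref{prop:tcl} hold in the linear setting and transfer its conclusion using the eventual identity $\t^{K,\epsilon}_N=\t^K_N$.

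The core of the statement, and the step for which the truncation $\vee(\epsilon I_q)$ is indispensable, is the uniform integrability. In the linear case $v^\infty$ is exactly quadratic, $v^\infty(\t)-v^\infty(\t^\s)=\tfrac12(\t-\t^\s)^T H(\t-\t^\s)$, so $N(v^\infty(\t^{K,\epsilon}_N)-v^\infty(\t^\s))\le\tfrac12\norm{H}\,\abs{\sqrt N(\t^{K,\epsilon}_N-\t^\s)}^2$ and it is enough to bound $\sup_N\E[\abs{\sqrt N(\t^{K,\epsilon}_N-\t^\s)}^{2+2\delta}]$ for some $\delta>0$. The truncation ensures $\norm{(M_N\vee\epsilon I_q)^{-1}}\le 1/\epsilon$ for every $N$, so I would apply the identity above and control separately (a) the i.i.d. sum $\frac1{\sqrt N}\sum_i\xi_i$ and (b) the correction. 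For (a), a Rosenthal (or Marcinkiewicz--Zygmund) inequality bounds $\E[\abs{\frac1{\sqrt N}\sum_i\xi_i}^{2+2\delta}]$ uniformly in $N$ as soon as $\E[\abs{\xi_1}^{2+2\delta}]<\infty$, which follows from $\E[\abs{u(X)}^{4+\eta}]<\infty$ and $\E[f(Y)^{2+\eta}\abs{u(X)}^{2+\eta}]<\infty$ (treating $\bar f_1$ by Jensen and conditional independence) provided $2+2\delta\le 2+\eta$ and $4+4\delta\le 4+\eta$. For (b), the correction is supported on $\{\lambda_{\min}(M_N)<\epsilon\}\subseteq\{\norm{M_N-H}>\epsilon\}$, where $\norm{M_N\vee\epsilon I_q-M_N}\le\epsilon$, so its $(2+2\delta)$-moment is at most $C\,N^{1+\delta}\P(\norm{M_N-H}>\epsilon)$; the bound $\E[\norm{M_N-H}^{2+\eta/2}]=O(N^{-(1+\eta/4)})$ then makes this $O(N^{\delta-\eta/4})$, which tends to $0$ for $\delta<\eta/4$ (e.g. $\delta=\eta/8$, compatible with the constraints from (a)).

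With the uniform integrability of $N(v^\infty(\t^{K,\epsilon}_N)-v^\infty(\t^\s))$ and the CLT in hand, applying Theorem~\ref{thm:optimK} to the truncated estimator yields both the expansion $\E[v^\infty(\t^{K,\epsilon}_N)]=v^\infty(\t^\s)+\tr(\Gamma^K H^{-1})/N+o(1/N)$ and the optimal allocation. The main obstacle is step~(b): bounding the contribution of the event on which the empirical Gram matrix is nearly singular --- exactly the pathology the $\vee(\epsilon I_q)$ regularization is designed to neutralize.
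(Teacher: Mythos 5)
Your proof is correct, and its overall architecture (truncation eventually inactive, hence a.s.\ convergence and the CLT transfer; uniform integrability via $(2+\eta)$-type moment bounds; quadratic form of $v^\infty$; then Theorem~\ref{thm:optimK}) matches the paper's. Where you genuinely diverge is in the decomposition used for the uniform integrability step. The paper writes
\begin{equation*}
\t^{K,\epsilon}_N-\t^\s=2\Bigl[\bigl(M_N\vee\epsilon I_q\bigr)^{-1}-H^{-1}\Bigr]\E[f(Y)u(X)]+2\bigl(M_N\vee\epsilon I_q\bigr)^{-1}\bigl(v_N-\E[f(Y)u(X)]\bigr),
\end{equation*}
and uses the resolvent bound $|M_1^{-1}-M_2^{-1}|\le\epsilon^{-2}|M_1-M_2|$ (valid because the truncation keeps both matrices above $\epsilon I_q$) to dominate $|\t^{K,\epsilon}_N-\t^\s|$ by a constant times the deviations of the two empirical means $M_N$ and $v_N$ from their expectations; a single appeal to Lemma~\ref{lem_ui_TCL} (a Rosenthal-type bound) then gives uniform integrability with no case analysis. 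You instead center via the score $\xi_i=u(X_i)(\bar f_i-\t^\s\cdot u(X_i))$, whose covariance is $\Gamma^K$ by~\eqref{eq:Gamma_K}, which buys you a self-contained CLT without passing through Proposition~\ref{prop:tcl}, but forces you to handle separately the correction term supported on the rare event $\{\lambda_{\min}(M_N)<\epsilon\}$; your estimate $N^{1+\delta}\P(\|M_N-H\|>\epsilon)=O(N^{\delta-\eta/4})$ with $\delta=\eta/8$ does close that gap correctly under the stated moment hypotheses. In short: the paper's decomposition makes the truncation harmless by construction and is shorter; yours isolates the martingale structure more explicitly at the cost of an extra large-deviation-type estimate, and correctly identifies that estimate as the crux.
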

\begin{proof}
By the law of large numbers, $\frac 2 N  \sum_{i=1}^N u(X_i)u(X_i)^T \to H$, almost surely. Since $H-2\epsilon I_q$ is positive definite, there exists, almost surely, $\bar{N}$ such that for $N\ge \bar{N}$, $\frac 2 N  \sum_{i=1}^N u(X_i)u(X_i)^T-\epsilon I_q$
is positive definite and thus $\t^{K,\epsilon}_N=\t^K_N$. This gives $\t^{K,\epsilon}_N\to \t^\s$ a.s. by Proposition~\ref{prop:slln} and $\sqrt{N} (\t_N^{K,\epsilon} - \t^\s) \xrightarrow[N \to \infty]{\cl} \cn(0, 4 H^{-1} \Gamma^{K} H^{-1})$  by Proposition~\ref{prop:tcl}.

Now, we check the uniform integrability of the sequence $N|\t^{K,\epsilon}_N-\t^\s|^2$. We have
\begin{align*}
  &\t^{K,\epsilon}_N-\t^\s=2\left[\left(\left(\frac 2 N  \sum_{i=1}^N u(X_i)u(X_i)^T\right)\vee (\epsilon I_q) \right)^{-1}-H^{-1}\right]\E[f(Y)u(X)]  \\
  &+ 2\left(\left(\frac 2 N  \sum_{i=1}^N u(X_i)u(X_i)^T\right)\vee (\epsilon I_q) \right)^{-1} \left( \frac 1 N \sum_{i=1}^N \left( \frac 1 K \sum_{k=1}^K f(Y_i^{(k)})\right)u(X_i)  -\E[f(Y)u(X)]\right)
\end{align*}
Note that for two symmetric matrices $M_1,M_2$ such that $M_1-\epsilon I_q$ and $M_2-\epsilon I_q$ are definite positive, we have $|M_1^{-1}-M_2^{-1}|=|M_1^{-1}(M_2-M_1)M_2^{-1}|\le \frac 1 {\epsilon^2} |M_2-M_1|$.  Thus, we obtain
\begin{align*}
  |\t^{K,\epsilon}_N-\t^\s|\le& \frac{2}{\epsilon^2} \left|\left(\frac 2 N  \sum_{i=1}^N u(X_i)u(X_i)^T\right)\vee (\epsilon I_q)- H \right| |\E[f(Y)u(X)] | \\&+ \frac{2}{\epsilon}\left|\frac 1 N \sum_{i=1}^N \left( \frac 1 K \sum_{k=1}^K f(Y_i^{(k)})\right)u(X_i)  -\E[f(Y)u(X)]\right|.
\end{align*}
Then, Lemma~\ref{lem_ui_TCL} with the assumptions on the moments gives the  uniform integrability of the sequence~$(N|\t^{K,\epsilon}_N-\t^\s|^2)_{N\ge 1}$.
Last, observe that $0\le v^\infty(\t)-v^\infty(\t^\s)=(\t-\t^\s)^T H(\t-\t^\s)\le |H| |\t -\t^\s|^2$. Therefore, the sequence $N(v^\infty(\t^{K,\epsilon}_N)-v^\infty(\t^\s))$ is uniformly integrable, and we get $\E[v^\infty(\t^K_N)] \underset{N\to \infty}{=} v^\infty(\t^\star)+\frac{\tr(\Gamma^{K}H^{-1})}N+o(1/N) $ as in the proof of Theorem~\ref{thm:optimK}.
\end{proof}

\subsection{Piecewise constant approximation framework}
\label{sec:local_approx}

We now specify our results in the linear case when $X$ takes its values in $[0,1]^d$, with $q=M^d$ and the basis
\begin{equation}\label{base_indic}
  u_n(x)=\prod_{j=1}^d \mathbf{1}_{I_{a_j}}(x_j),
\end{equation}
for  $n-1=a_1+a_2M+\cdots+a_dM^{d-1}$ with $a_1,\dots,a_d \in \{0,\dots,M-1\}$, and $I_a=[\frac{a}M,\frac{a+1}{M})$ for $a=0,\dots,M-2$ and $I_{M-1}=[\frac{M-1}M,1]$.

The next proposition shows that with this choice of basis, the optimal number of inner simulations is (under suitable assumptions) at least of order $M$. This illustrates that the sharper is the family $\varphi(\theta,x)$ to approximate the conditional expectation $\E[f(Y)|X]$, the larger is the optimal number of inner simulations.

\begin{proposition}\label{prop:linear_reg} Let us assume that $\E[f(Y)|X]=\psi(X)$ with $\psi:[0,1]^d\to \R$ being a Lipschitz function with Lipschitz constant $L$. Let us assume that $\E[f(Y)^2|X]-(\E[f(Y)|X])^2=\sigma^2(X)$ for a function $\sigma:[0,1]^d \to [\underline{\sigma},+\infty)$ for some $0<\underline{\sigma}<\infty$. We consider $\varphi(\theta,X)=\theta\cdot u(X)$ with $\theta\in \R^q$, $q=M^d$ with the basis defined by~\eqref{base_indic}.

Then, $\tr(AH^{-1})\le \frac 12 L^2M^{d-2}$ and $\frac 12 \underline{\sigma}^2 M^d\le  \tr(BH^{-1})$. In particular, $\frac{\tr(BH^{-1})}{\tr(AH^{-1})}\ge \underline{\sigma}^2 M^2 $ and thus $K^\star\ge \gamma M$ for some $\gamma>0$, with $K^\star$ given by Theorem~\ref{thm:optimK}.
\end{proposition}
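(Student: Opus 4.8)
The plan is to exploit the orthogonality of the indicator basis~\eqref{base_indic}, which makes every matrix in play diagonal and turns all the traces into explicit sums over the $M^d$ cells. Write $R_n=I_{a_1}\times\cdots\times I_{a_d}$ for the $n$-th cell and $p_n=\P(X\in R_n)$. Since the $u_n(X)$ are indicators of pairwise disjoint cells, $u_n(X)u_m(X)=0$ for $n\neq m$ and $u_n(X)^2=u_n(X)$; hence $H=2\E[u(X)u(X)^T]=2\,\mathrm{diag}(p_1,\dots,p_q)$. The positive definiteness of $H$ demanded by~\eqref{H3_lin} forces $p_n>0$ for every $n$, so all $M^d$ cells carry positive mass and $H^{-1}=\tfrac12\,\mathrm{diag}(1/p_1,\dots,1/p_q)$.

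First I would identify $\t^\s$ and the matrices $A,B$. From $\t^\s=2H^{-1}\E[f(Y)u(X)]$ and $\E[f(Y)u_n(X)]=\E[\psi(X)\mathbf{1}_{R_n}(X)]$, the $n$-th coordinate of $\t^\s$ is the cell average $\bar\psi_n:=\E[\psi(X)\mid X\in R_n]$, so that $\varphi(\t^\s,X)=\bar\psi_{n(X)}$ on the cell $n(X)$ containing $X$. Inserting this into~\eqref{def_A}--\eqref{def_B} and using once more that $u(X)u(X)^T$ has a single nonzero diagonal entry, both $A$ and $B$ come out diagonal with
\[
A_{nn}=\E[(\psi(X)-\bar\psi_n)^2\mathbf{1}_{R_n}(X)]=p_n\,\Var(\psi(X)\mid X\in R_n),
\qquad
B_{nn}=\E[\sigma^2(X)\mathbf{1}_{R_n}(X)],
\]
the second identity following from $\E[(f(Y)-\psi(X))^2\mid X]=\sigma^2(X)$. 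Consequently $\tr(AH^{-1})=\tfrac12\sum_n\Var(\psi(X)\mid X\in R_n)$ and $\tr(BH^{-1})=\tfrac12\sum_n\E[\sigma^2(X)\mid X\in R_n]$.

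The lower bound on $B$ is then immediate: since $\sigma^2\ge\underline{\sigma}^2$ pointwise, each summand is at least $\underline{\sigma}^2$, and summing over the $M^d$ cells gives $\tr(BH^{-1})\ge\tfrac12\underline{\sigma}^2 M^d$. The upper bound on $A$ is the step I expect to require the most care. I would control each conditional variance through the identity $\Var(\psi(X)\mid X\in R_n)=\tfrac12\E[(\psi(X)-\psi(X'))^2\mid X,X'\in R_n]$, where $X,X'$ are conditionally iid in $R_n$; the Lipschitz bound $|\psi(X)-\psi(X')|\le L|X-X'|$ together with $\mathrm{diam}(R_n)\le\sqrt d/M$ yields a per-cell bound of order $L^2/M^2$, and summing over the $M^d$ cells produces a bound of order $L^2 M^{d-2}$, which is the content of the first inequality (the precise constant coming from the sharpest form of the variance estimate available under the chosen norm).

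Finally, combining the two bounds gives $\tr(BH^{-1})/\tr(AH^{-1})\gtrsim\underline{\sigma}^2 M^2$, so that $\xi=\tr(BH^{-1})/\bigl(C_{Y|X}\tr(AH^{-1})\bigr)$ is of order $M^2$. Since $\nu(\xi)\ge\lfloor\sqrt{\xi}\rfloor\ge\sqrt{\xi}-1$ by Definition~\ref{def_nu}, Theorem~\ref{thm:optimK} gives $K^\star=\nu(\xi)\ge\gamma M$ for a suitable $\gamma>0$ and all large $M$. The only genuinely delicate points are the positivity of every cell probability, which is supplied by $H$ being positive definite, and the sharp constant in the per-cell variance estimate; the rest reduces to a transparent diagonal computation.
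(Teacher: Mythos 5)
Your proposal is correct and follows essentially the same route as the paper: exploit the disjointness of the cells to diagonalize $H$, $A$, $B$, identify $\t^\s_n$ as the conditional cell average of $\psi$, bound $B_{nn}$ below by $\underline{\sigma}^2 p_n$, and bound the per-cell contribution to $A$ by the Lipschitz property over a cell of diameter $O(1/M)$. The only cosmetic differences are that the paper obtains $\t^\s_n=\E[\psi(X)\mid X\in C_n]$ directly from minimizing $v^\infty$ and bounds $|\t^\s_n-\psi(x)|\le L/M$ pointwise rather than via the two-sample variance identity (which, as you note, even yields a slightly sharper constant).
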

\begin{proof}
  We have $u_n(x)=\mathbf{1}_{C_n}(x)$, with $C_n=I_{a_1}\times \cdots \times I_{a_d}$. Since $C_n\cap C_{n'}=\emptyset$ for $n\not = n'$, we have that $u(x)u(x)^T$ is a diagonal matrix. Then, the matrices $H$, $A$ and $B$ are diagonal and we get:
  \begin{align*}
    & H_{nn}=2\P(X \in C_n),\ A_{nn}=\E[(\theta^\star\cdot u(X)-\E[f(Y)|X])^2\mathbf{1}_{X\in C_n}] ,\\ &B_{nn}=\E[(f(Y)-\E[f(Y)|X])^2\mathbf{1}_{X\in C_n}]=\E[\E[f(Y)^2|X]-(\E[f(Y)|X])^2\mathbf{1}_{X\in C_n}].
  \end{align*}
  Therefore, we get $\tr(AH^{-1})=\frac 12 \sum_{n=1}^q \E[(\theta^\star\cdot u(X)-\E[f(Y)|X])^2 | {X\in C_n}]$ and $\tr(BH^{-1})=\frac 12 \sum_{n=1}^q \E[\E[f(Y)^2|X]-(\E[f(Y)|X])^2| X\in C_n]=\frac 12 \sum_{n=1}^q \E[\sigma(X)^2| X\in C_n]\ge \frac{\underline{\sigma}^2q}{2}$. Besides, we observe that $\theta^\star_n=\E[\psi(X)|X\in C_n]$ since $\theta^\star$ minimizes $v^\infty(\theta)=\sum_{n=1}^q \E[\mathbf{1}_{C_n}(X)(\theta_n-\psi(X))^2]$, and therefore
  $|\t^\star_n-\psi(x)|\le L/M$ for $x\in C_n$ by the triangular inequality. This gives $\tr(AH^{-1})\le \frac 12(L/M)^2q$, and then the claim.
\end{proof}

\begin{remark}(Asymptotic optimal tuning of $M$, $K$ and $N$) We work under the assumptions of Proposition~\ref{prop:linear_reg} and assume in addition that $\sigma(x)\le \overline{\sigma}<\infty$. We are interested in minimizing
  $$\mathcal{E}:=\E \left[  \frac 1 N \sum_{i=1}^N\left({\t}^K_N\cdot u(X_i) - \frac 1K \sum_{k=1}^Kf(Y_i^{(k)}) \right)^2\right],$$
  which is the averaged quadratic error on the sample. Following the lines of~\cite[Theorem 8.2.4]{Gobet}, we get $\mathcal{E}=\mathcal{E}_1+\mathcal{E}_2$, with
  $$\mathcal{E}_1=\E \left[  \frac 1 N \sum_{i=1}^N\left({\t}^K_N \cdot u(X_i) - \psi(X_i) \right)^2\right] \text{ and } \mathcal{E}_2=\E \left[  \frac 1 N \sum_{i=1}^N\left(\psi(X_i) - \frac 1K \sum_{k=1}^Kf(Y_i^{(k)}) \right)^2\right], $$
  representing respectively the approximation error and the statistical error. We show in the same manner that  $\mathcal{E}_1\le \frac{L^2}{M^2}$ and
  $\mathcal{E}_2 \le \frac{\overline{\sigma}^2M^d}{K N}$. To achieve a precision of order $\varepsilon>0$ (and a quadratic error of order $\varepsilon^2$), we then take   $\frac{L^2}{M^2}=\varepsilon^2$ and $\frac{\overline{\sigma}^2M^d}{K N}=\varepsilon^2$. This leads to $M=c\varepsilon^{-1}$ and $KN=c' \varepsilon^{-(2+d)}$ for some constants $c,c'>0$, for an overall computational cost of $O(\varepsilon^{-(3+d)})$. Taking the optimal choice $K^\star$ of Theorem~\ref{thm:optimK} does not change the order of convergence but improves its multiplicative rate.
\end{remark}

\section{Numerical experiments}\label{Sec_num}

The present numerical section is organized as follows. We first present the different estimators to approximate $K^\s$ and describe the Monte-Carlo algorithm that is used for all the examples. Then, we begin with a toy example that illustrates that $K^\s$ may be arbitrarily large. In this case, as the computational time $C_{Y|X}$ is equal to $C_X$, the theoretical multiplicative gain $r^\s \approx 1/2$, and we almost reach this bound in practice.   The second example shows a case where the computational time $C_{Y|X}$ is much smaller than $C_X$, and therefore where the multiplicative gain may be larger. The last example is more practically oriented and deals with risk management concerns.  

In this section, we compare the performances of the following estimations of $K^\s$ based on Theorem~\ref{thm:optimK},
\[
  K^\star_H= \nu\left(\frac{\tr(BH^{-1})}{C_{Y|X}\tr(AH^{-1})} \right)
\]
with the one suggested by Remark~\ref{rk_KnoH},
\[
  K^\star_{\noH}= \nu\left(\frac{\tr(B)}{C_{Y|X}\tr(A)} \right),
\]
which is much simpler to estimate. Following~\eqref{def_KAH} and~\eqref{def_KGH}, we approximate $K^\star_H$ or $K^\star_{\noH}$ by the four estimators
\begin{equation}
  \label{eq:Kstar-estimators}
  \begin{alignedat}{2}
    &\hat{K}^A_H = \nu\left(\frac{\tr( \hat B^{anti}_{\bar{K}} \hat{H}^{-1})}{C_{Y|X} \tr( (\hat A^{anti}_{\bar{K}} \hat{H}^{-1})_+)} \right);& \quad & 
    \hat{K}^A_{\noH} = \nu\left(\frac{\tr( \hat B^{anti}_{\bar{K}} )}{C_{Y|X} \tr( (\hat A^{anti}_{\bar{K}} )_+)} \right)\\
    &\hat{K}^\Gamma_H= \nu\left(\frac{\tr( \hat B^{anti}_{\bar{K}} \hat{H}^{-1})}{C_{Y|X} \tr( \Gamma^{2\bar{K}}_N \hat{H}^{-1})} \right);& \quad &
   \hat{K}^\Gamma_{\noH} = \nu\left(\frac{\tr( \hat B^{anti}_{\bar{K}} )}{C_{Y|X} \tr( \Gamma^{2\bar{K}}_N )} \right),
  \end{alignedat}
\end{equation}
where $\hat{H}=\nabla^2v^K_N(\t^K_N)$. Note that in the linear regression framework, we simply have $\hat{H}=\frac 1 N \sum_{i=1}^Nu(X_i)u(X_i)^T$. When $q=1$, matrices are scalar, and we take   $\hat{K}^A_H =  \hat{K}^A_{\noH} = \nu\left(\frac{ \hat B^{anti}_{\bar{K}} }{C_{Y|X} | \hat A^{anti}_{\bar{K}}| } \right)$ and $\hat{K}^\Gamma_H=\hat{K}^\Gamma_{\noH} = \nu\left(\frac{ \hat B^{anti}_{\bar{K}} }{C_{Y|X}  \Gamma^{2\bar{K}}_N } \right)$. Since $\Gamma^{2\bar{K}}\ge A$ is a semi-definite positive matrix, $\hat{K}^\Gamma_H$ and  $\hat{K}^\Gamma_{\noH}$ will slightly underestimate $ K^\star_H$ and $ K^\star_{\noH}$. However, as we will see they have a much smaller variance and give a nearly optimal computational gain.

For each example, we run our algorithm $20,000$ times to approximate $\E[v^K_N(\t^\s)] - \E[v^K_N(\t_N^K)]$, which is (under uniform integrability assumption) an estimator of $\E[v^\infty(\t_N^K)] - \E[v^\infty(\t^\s)]$ by Proposition~\ref{prop:asymp}. Namely, we calculate for $J=20,000$ the estimator $\frac 1 J \sum_{j=1}^J v^K_{N,j}(\t^\s) -  v^K_{N,j}(\t^K_{N,j})$, where $(v^K_{N,j})_{1\le j \le J}$ are iid samples of~\eqref{eq:vKN} and, for each $j$, $\t^K_{N,j}$ is the minimum of $v^K_{N,j}$. This minimum is computed explicitly for linear regression and can be approximated by a gradient descent otherwise. The value of $\t^\s$ is approximated by minimizing $v_N^1(\theta)$ for $N=100,000$. In comparison, the values of $(N,K)$ to sample $v^K_{N,j}$ and then $\t^K_{N,j}$ are such that $N \frac{1+KC_{Y|X}}{1+C_{Y|X}} \approx 5000$. This means that the simulation computational cost is fixed at 5000 times the cost of simulating~$X$, across the different values of~$K$.  

Using the $20,000$ runs, we compute as many samples of the estimators $\hat{K}^A_H$, $\hat{K}^A_{\noH}$, $\hat{K}^\Gamma_H$ and $\hat{K}^\Gamma_{\noH}$ and plot their empirical distributions on the window ${0, \dots, 110}$. To do so, we use $N=50000$ samples in the formulas~\eqref{eq:Kstar-estimators} and indicate the value of $\bar{K}$ in the captions of each related figure. Separately, we also calculate on $20,000$ runs the multiplicative computational gain $r^K$ defined by Remark~\ref{rk_rK} by using the estimator
\begin{align}\label{eq:hat_r_K}
 \hat{r}^K=\frac{\frac 1 J \sum_{j=1}^J v^K_{N'(N,K),j}(\t^\s) -  v^K_{N'(N,K),j}(\t^K_{N'(N,K),j})}{\frac 1 J \sum_{j=1}^J v^1_{N,j}(\t^\s) -  v^1_{N,j}(\t^1_{N,j})}
\end{align}
with $N=5000$ and $N'(N,K)=\left\lfloor N \frac{1+C_{Y|X}}{1+KC_{Y|X}}\right \rfloor$. In fact, assuming the uniform integrability of the family $N'(N,K)(v^K_{N'(N,K)}(\t^\s)-v^K_{N'(N,K)}(\t^K_{N'(N,K)}))$, we get by Proposition~\ref{prop:asymp} that $\E[v^K_{N'(N,K)}(\t^\s)]-\E[v^K_{N'(N,K)}(\t^K_{N'(N,K)})]\sim_{N\to \infty} \frac{\tr(\Gamma^K H^{-1})}{N'(N,K)}$ exactly as in the proof of Theorem~\ref{thm:optimK}.  By~\eqref{def_rK}, this gives
$$ \frac{\E[v^K_{N'(N,K)}(\t^\s)]-\E[v^K_{N'(N,K)}(\t^K_{N'(N,K)})]}{\E[v^1_{N}(\t^\s)]-\E[v^1_{N}(\t^1_{N})]} \to_{N\to \infty} \frac{1+KC_{Y|X}}{1+C_{Y|X}} \frac{\tr(\Gamma^K H^{-1})}{\tr(\Gamma^1 H^{-1})}=r^K.$$

\subsection{Toy example in a Gaussian framework}

Consider a one dimensional toy example in a Gaussian framework. Let $(X,Y)$ be a Gaussian vector such that $X$ and $Y$ are two standard normal random variables with covariance $\rho \in [-1, 1]$. Let $f$ be the square function, $f: x \in \R \longmapsto x^2$. We consider a constant approximation meaning that the function $\varphi$ is defined by $\varphi(\theta,x)=\theta$, for $\theta \in \R$ and $x \in \R$. Easy computations lead to explicit formulas
\begin{align*}
  &\E[f(Y)|X]= \rho^2 X^2 + (1-\rho^2) \\
  &\theta^\star=1; \quad A=\Gamma_\infty= 2 \rho^4; \quad B=2(1-\rho^4).
\end{align*}
In this case, the value of $K^\s$ is given by
\[
  K^\s=\nu\left(\frac{1-\rho^4}{\rho^4}\right).
\]
This very simple example shows that the optimal number of inner samples $K^\s$ can vary from $1$ to arbitrary large values. As the parameter $\theta$ is one dimensional, the Hessian matrix $H$ is scalar valued and therefore $ K^\s = K^\s_{\noH}$. Thus, the four estimators reduce to two.

\begin{figure}[h!tbp]
  \begin{center}
  \includegraphics[scale=0.7]{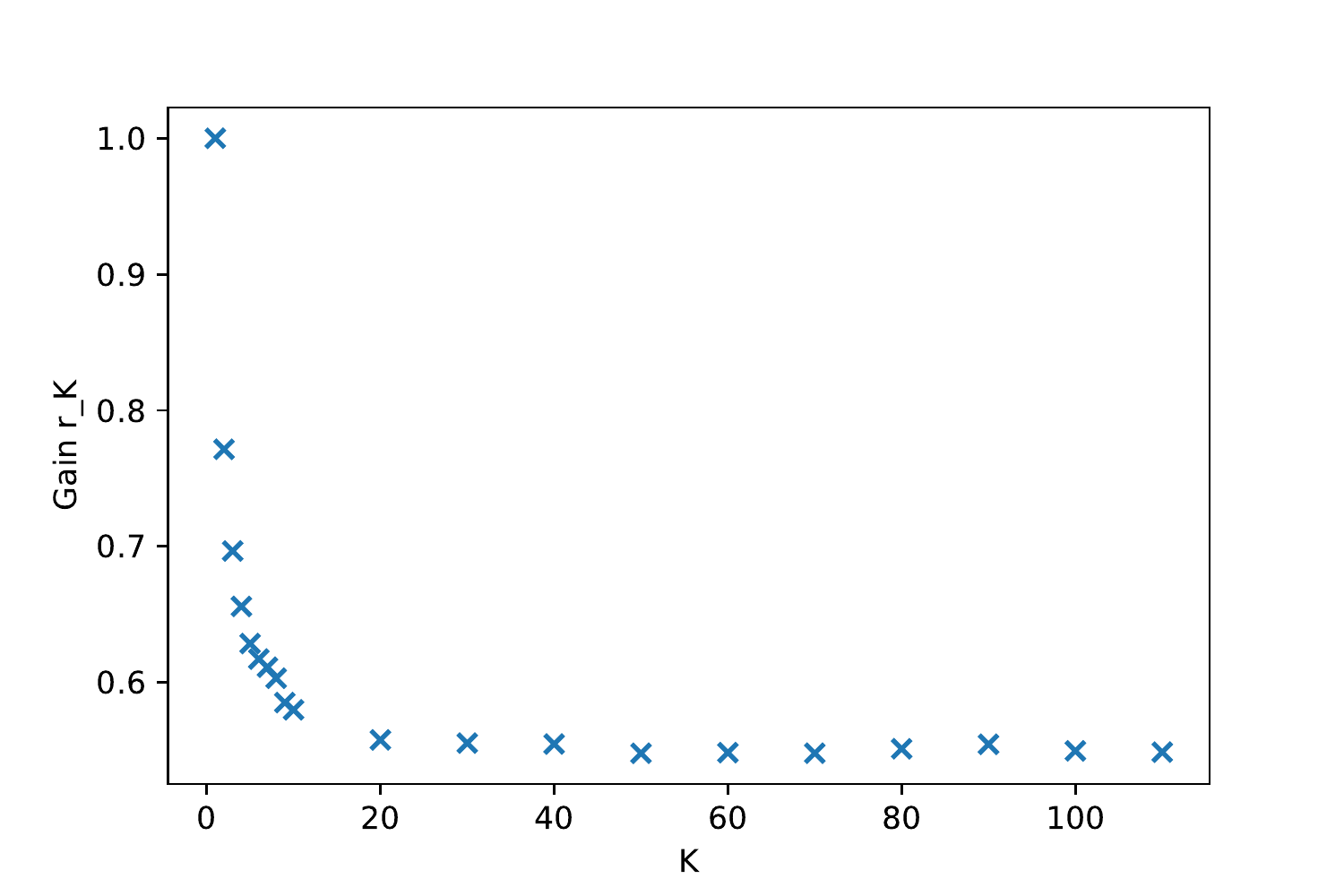}
  \end{center}
  \caption{Computational multiplicative gain as a function of~$K$ estimated with~\eqref{eq:hat_r_K} for the Gaussian toy example ($\rho = 0.1$) with regression on the constant function.}
  \label{fig:toy_efficiency}
\end{figure}

\begin{figure}[h!tbp]
  \centering\subfloat[Distribution of $\hat{K}^A_{H}$ with $\bar K=4$]{
    \label{fig:toy_K_H_A}
    \includegraphics[keepaspectratio, width=.45\textwidth]{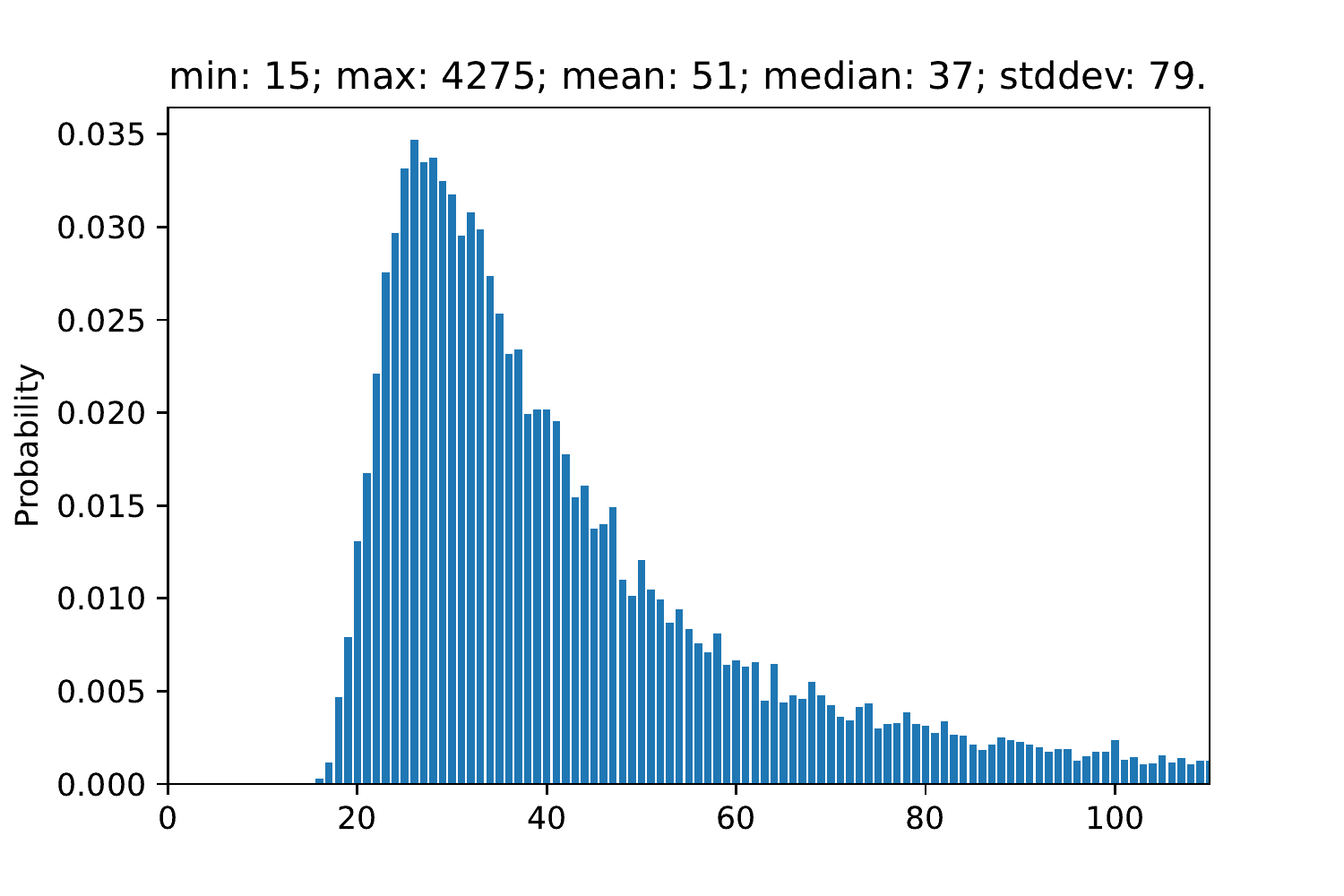}
  }
  \subfloat[Distribution of $\hat{K}^\Gamma_H$ with $\bar K=32$]{
    \label{fig:toy_K_H_Gamma}
    \includegraphics[keepaspectratio, width=.45\textwidth]{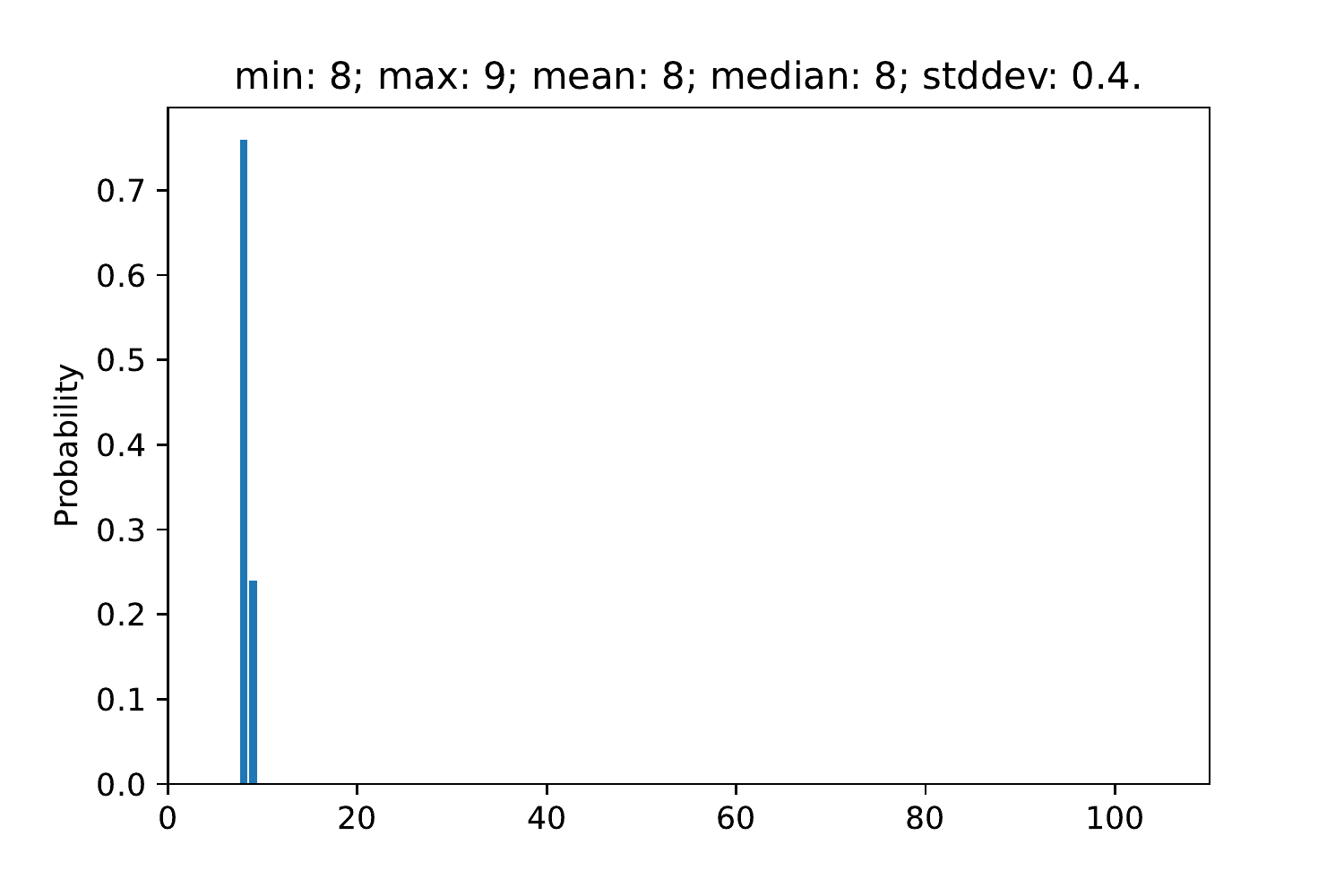}
  }
  \caption{Comparison of the 2 estimators $\hat{K}^A_H$ and $\hat{K}^\Gamma_H$ for the Gaussian toy example. }
  \label{fig:toy_K}
\end{figure}

For our numerical experiments on this toy example, we fix $\rho = 0.1$, in which case the theoretical value of $K^\s$ is $100$. Figure~\ref{fig:toy_efficiency} clearly shows that the gain $r_K$ is almost constant for any $K \ge 20$. Even though from a theoretical point of view $K^\s=100$, any values of $K$ larger than $20$ are equally good in practice. Note that from Corollary~\ref{cor:gain}, $r^\s \ge \frac{1}{2}$; this lower bound is almost attained by $r_K$ for $K \ge 20$. Figure~\ref{fig:toy_K} shows a comparison of the distributions of the two estimators $\hat{K}^A_H$, and $\hat{K}^\Gamma_H$. The estimator $\hat{K}^A_H$ has a very large standard deviation (equal to $79$) and may take values as large as $4275$, whereas the estimator $\hat{K}^\Gamma_H$ is much more concentrated and only takes two values $8$ and $9$. These are typical behaviours of these estimators: as discussed at the end of Section~\ref{sec:hat_A_B}, the estimated matrix $\hat A^{anti}_{\bar{K}}$ may have negative eigenvalues coming from a too large variance in the Monte Carlo computation and leading to non reliable estimations of $A$. On the contrary, $\hat{K}^\Gamma_H$ uses $\Gamma^{2 \bar K}_N$ as an approximation of $A$ from above leading to a conservative estimation of $K^\s$ from below with far less variance. The gains $r_K$ reported in Figure~\ref{fig:toy_efficiency} for $K=8$ or $K=9$ are very close to the best possible gains. In the example, we can conclude that $\hat{K}^\Gamma_H$ with $\bar K = 32$ is much better than $\hat{K}^A_H$, since it has small fluctuations and gives a nearly optimal computation gain. 

\subsection{A SDE conditioned on an intermediate date}\label{Subsec_SDE}

We consider the following SDE
\[
  dX_t = \cos(X_t) dW_t; \quad X_0=0
\]
where $W$ is a real valued Brownian motion. We aim at estimating $\E[X_{t_2}^2 | X_{t_1}]$ with $t_2 = 10$ and $t_1 = 9$.  This amounts to take $Y=X_{t_2}$, $f(x)=x^2$ and $X=X_{t_1}$ in~\eqref{minim_pbms}. The SDE is discretized using the Euler scheme with $200$ time-steps, hence inner simulations are cheaper than outer simulations; their relative cost is $C_{Y|X} = \frac{1}{9}$. We consider two different settings for the family of functions $(\varphi(\theta; \cdot))_\theta$: a polynomial with degree $3$ (see Figures~\ref{fig:cos_pol_efficiency_t9} and~\ref{fig:cos_sde_pol_K_t9}) and a piecewise constant approximation (see Figures~\ref{fig:cos_loc_efficiency_t9} and~\ref{fig:cos_sde_loc_K_t9}). In both settings, the parameter $\theta$ is multi-dimensional, so the Hessian matrix is a true matrix and the estimators with and without $H$ are actually different.

We build the piecewise constant approximation on $\R$ in the following way. First, we center the samples of $X$ around their mean and rescale them by their standard deviation, then we apply the  function $x \mapsto \frac{1}{\sqrt{\pi}} \int_{-\infty}^x \expp{-t^2} dt$  to map $\R$ into $(0, 1)$. Finally, we split the interval $[0, 1]$ into $M$ regular sub-intervals.

\begin{figure}[h!tbp]
  \begin{center}
  \includegraphics[scale=0.7]{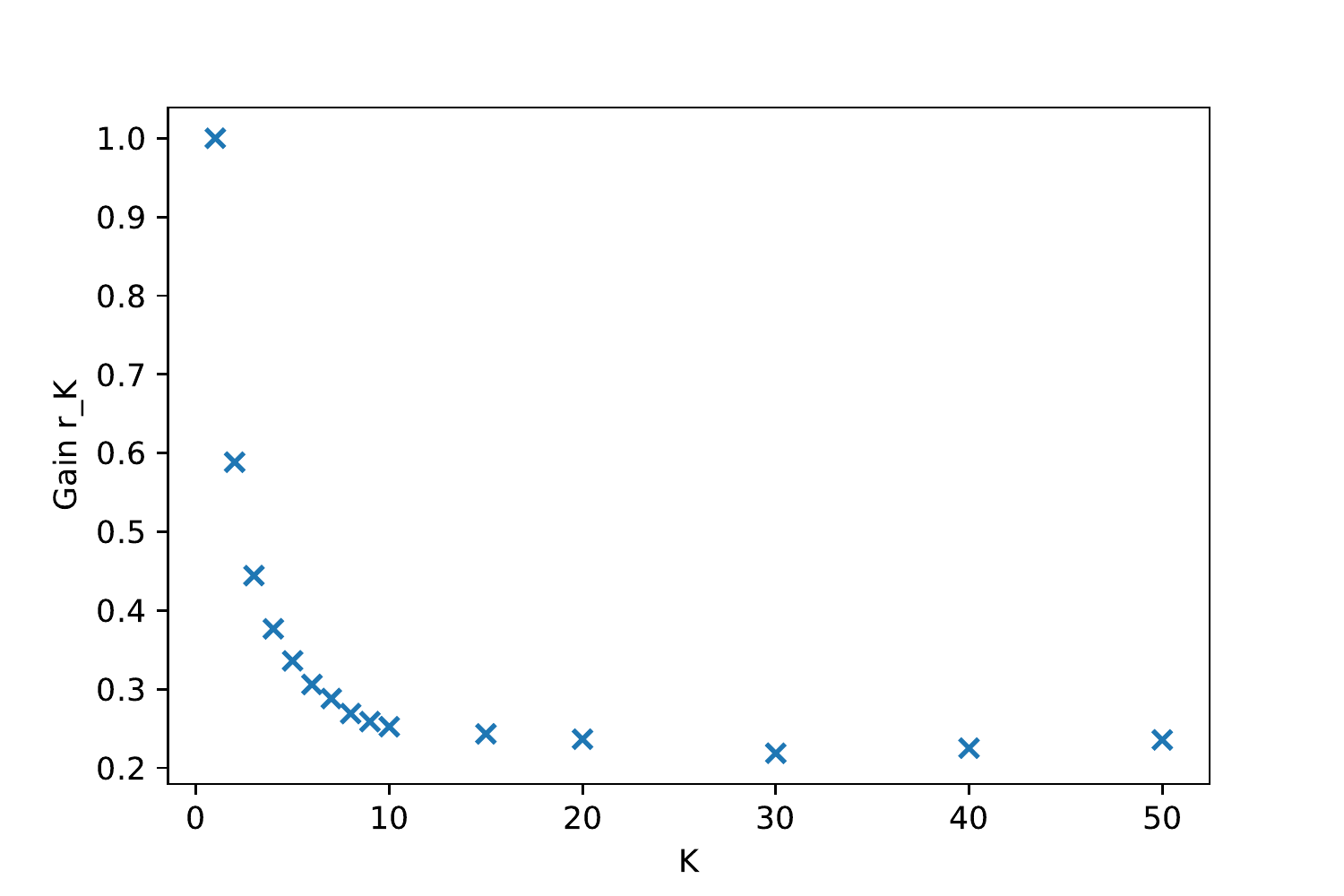}   
  \end{center}
  \caption{Computational multiplicative gain as a function of~$K$ estimated with~\eqref{eq:hat_r_K} for the SDE example with a polynomial regression of order $3$ and $t_1=9$.}
  \label{fig:cos_pol_efficiency_t9}
\end{figure}

\begin{figure}[h!tbp]   
  \begin{center}
  \includegraphics[scale=0.7]{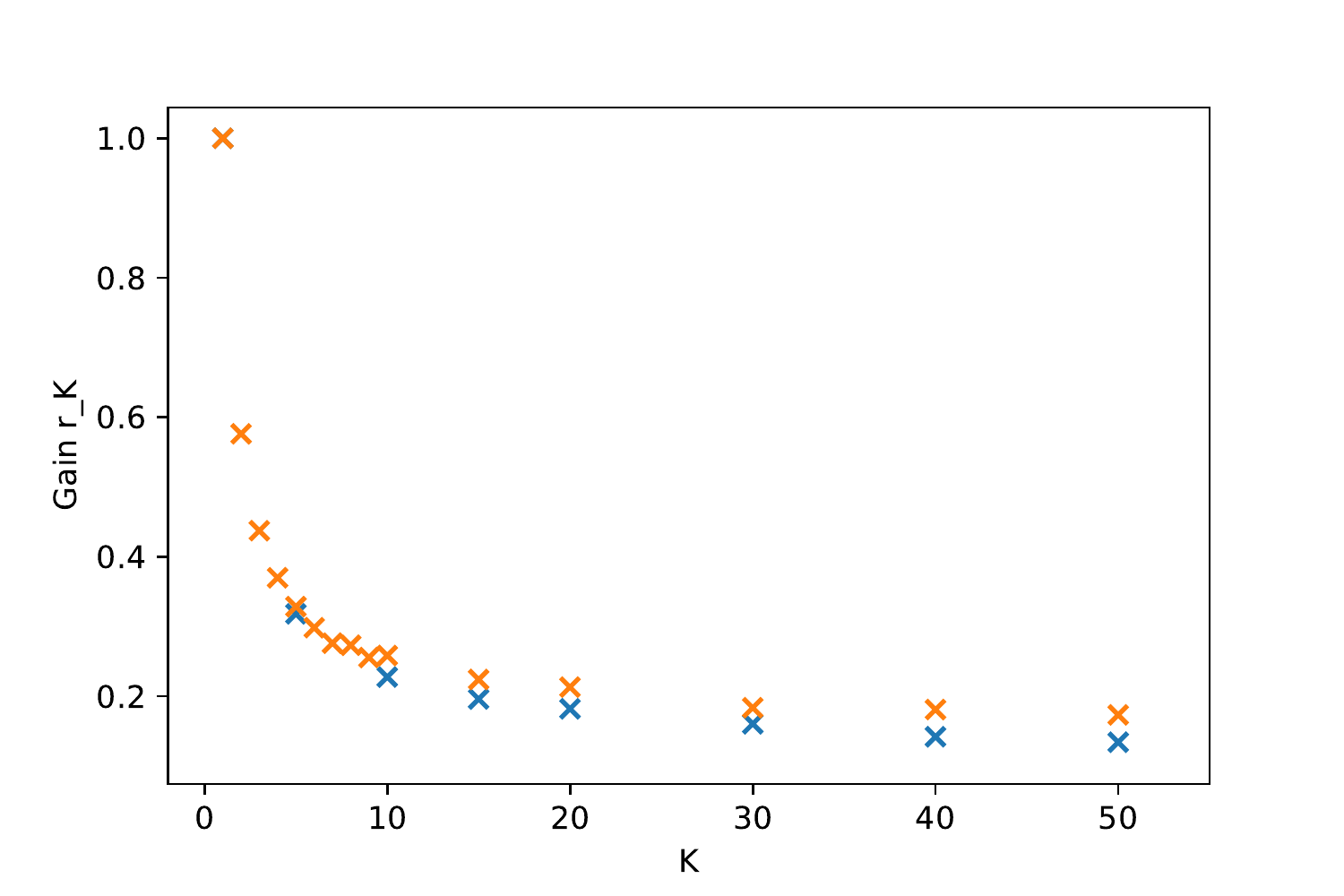}
  \end{center}
  \caption{Computational multiplicative gain as a function of~$K$ estimated with~\eqref{eq:hat_r_K} for the SDE example with a local regression with $t_1=9$ and $M=50$ (orange crosses) or $M=100$ (blue crosses).}
  \label{fig:cos_loc_efficiency_t9}
\end{figure} 

Figures~\ref{fig:cos_pol_efficiency_t9} and~\ref{fig:cos_loc_efficiency_t9} exhibit very similar gain profiles for $r^K$. The polynomial regression of order~3 works quite well on this example, which is related to the choice of $f(x)=x^2$. Figure~\ref{fig:cos_loc_efficiency_t9} compares the multiplicative gain for two different local regressions: one with $M=50$ intervals and the other one with $M=100$. As expected, increasing the number of cells improves the approximation and thus the gain, according to Corollary~\ref{cor:gain}. The multiplicative gain obtained is around $0.2$ for $M=50$ and $0.15$ for $M=100$, to be compared with the best gain $\frac{1}{10}$ given by Corollary~\ref{cor:gain}.

\begin{figure}[ht!bp]
  \centering\subfloat[Distribution of $\hat{K}^A_{H}$ with $\bar K=4$]{
    \label{fig:cos_sde_pol_K_H_A_t9}
    \includegraphics[keepaspectratio, width=.45\textwidth]{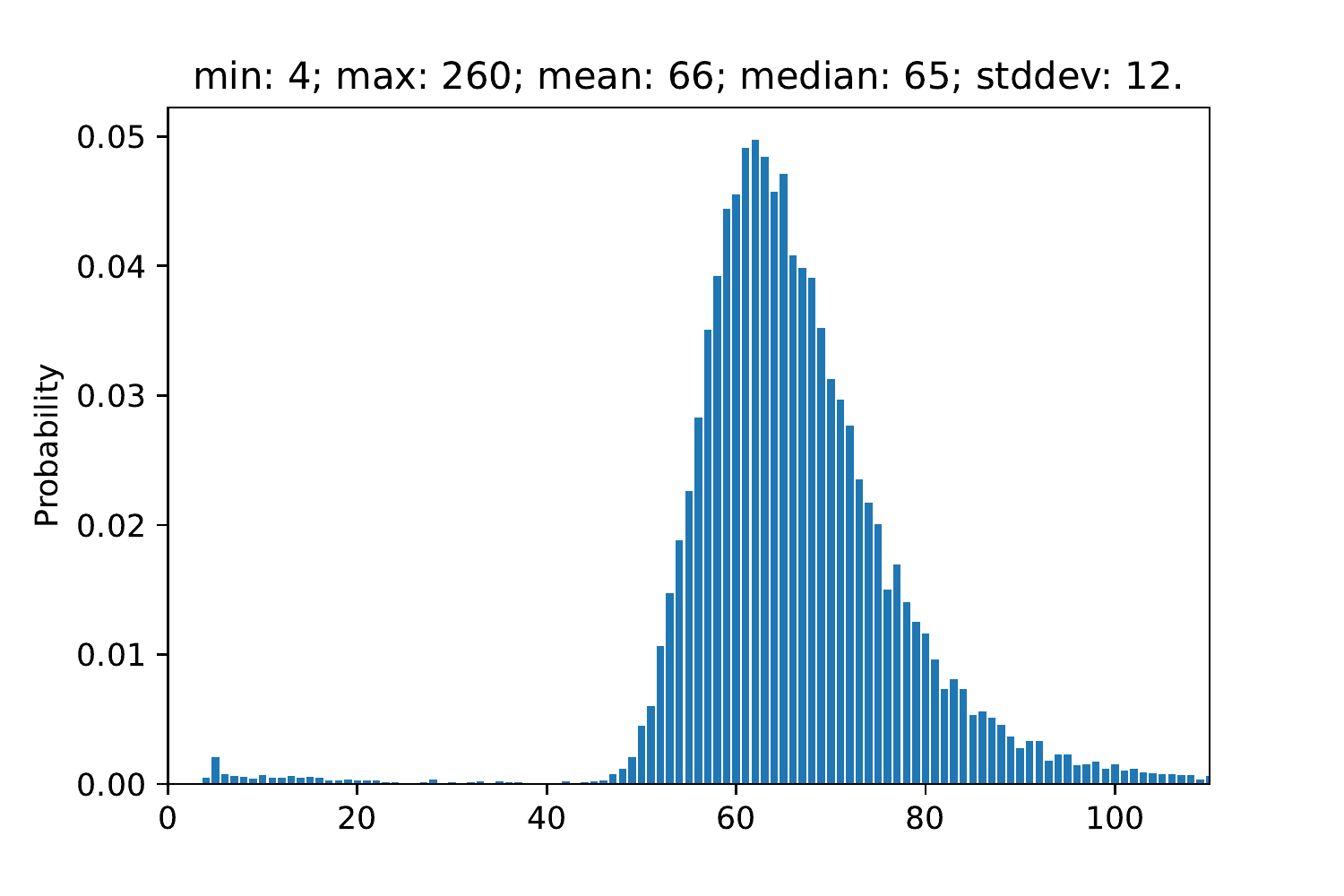}
  }
  \subfloat[Distribution of $\hat{K}^A_{\noH}$ with $\bar K=4$]{
    \label{fig:cos_sde_pol_K_noH_A_t9}
    \includegraphics[keepaspectratio, width=.45\textwidth]{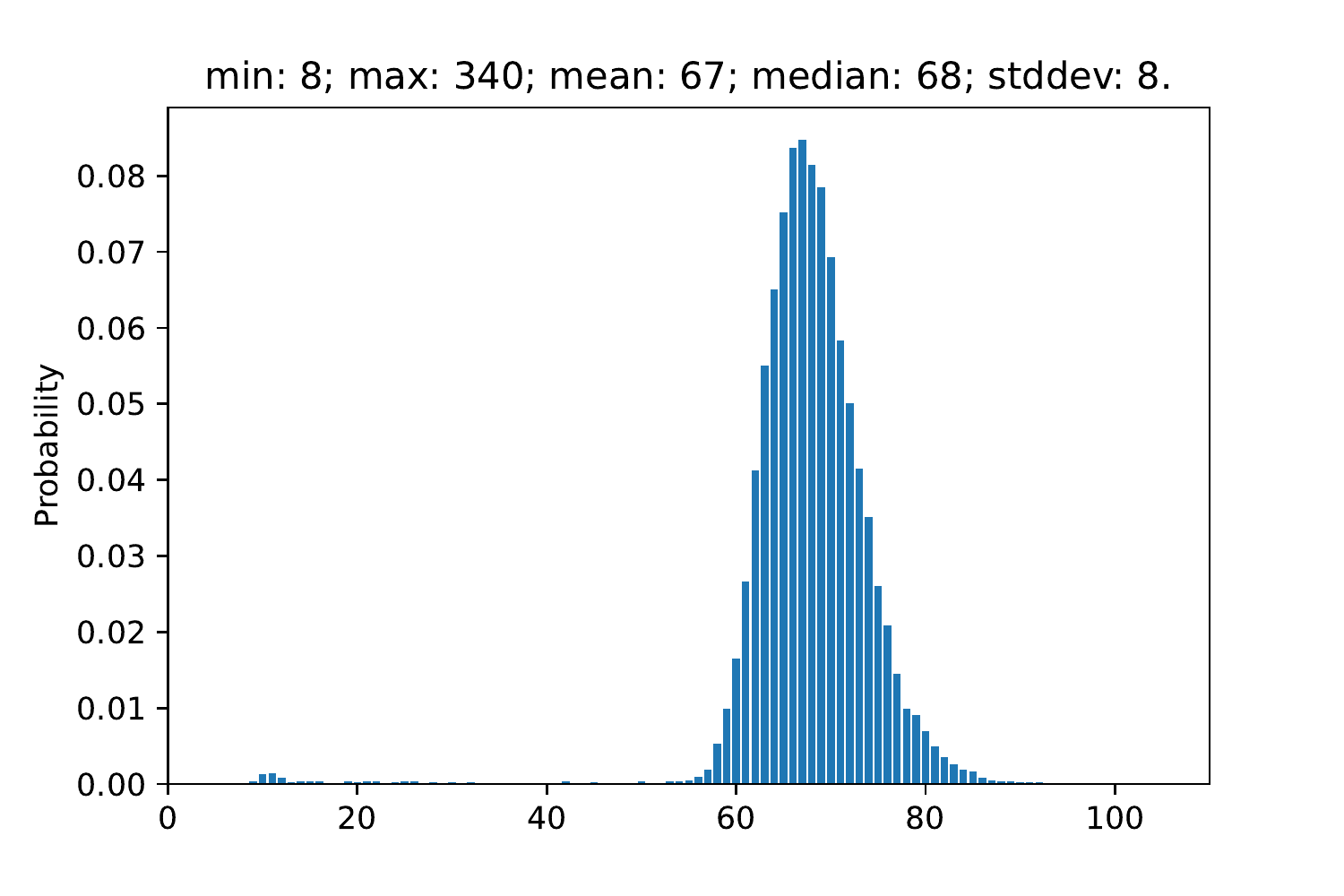}
  }\\
  \subfloat[Distribution of $\hat{K}^\Gamma_H$ with $\bar K=32$]{
    \label{fig:cos_sde_pol_K_H_Gamma2K_t9}
    \includegraphics[keepaspectratio, width=.45\textwidth]{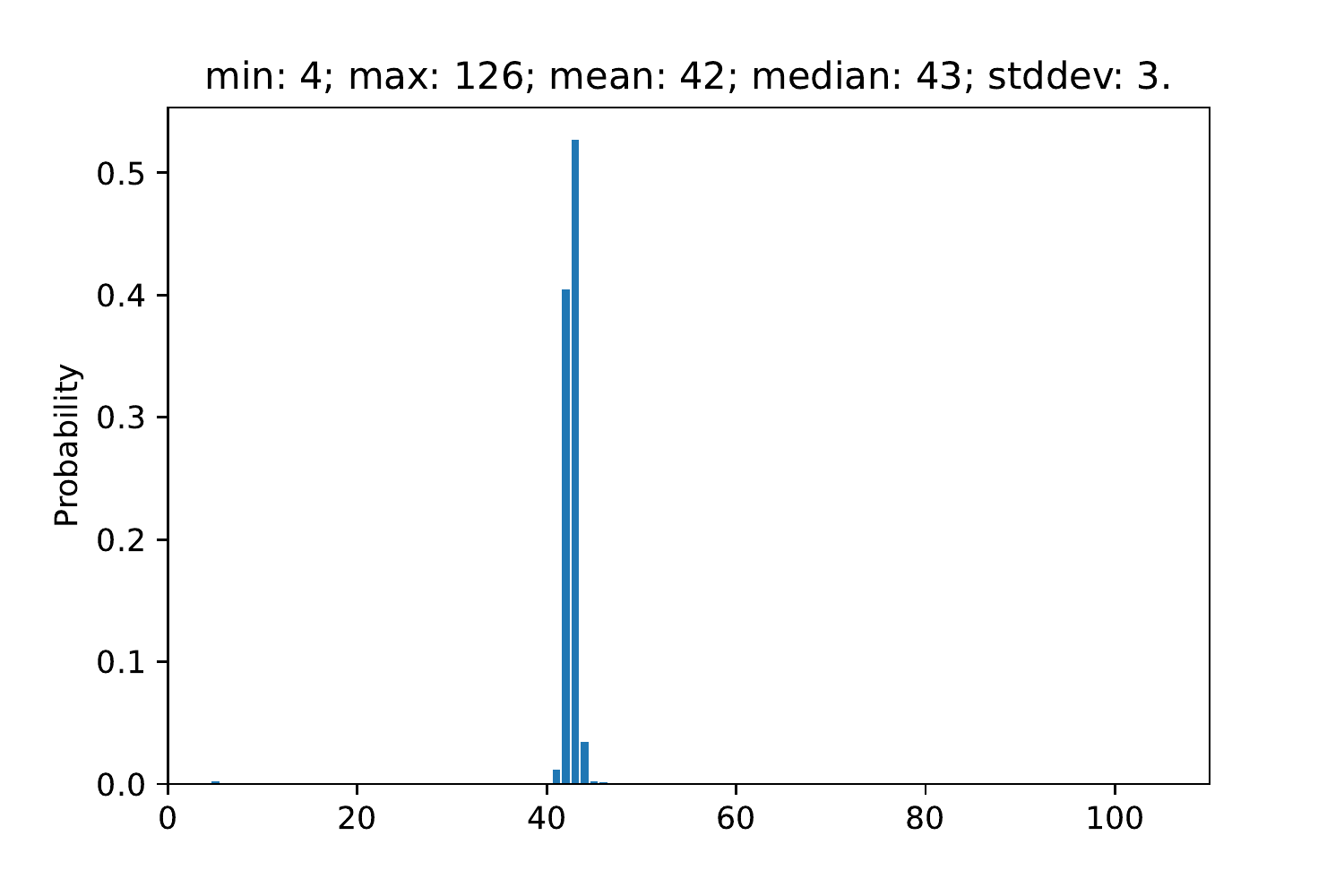}
  }
  \subfloat[Distribution of $\hat{K}^\Gamma_{\noH}$ with $\bar K=32$]{
    \label{fig:cos_sde_pol_K_noH_Gamma2K_t9}
    \includegraphics[keepaspectratio, width=.45\textwidth]{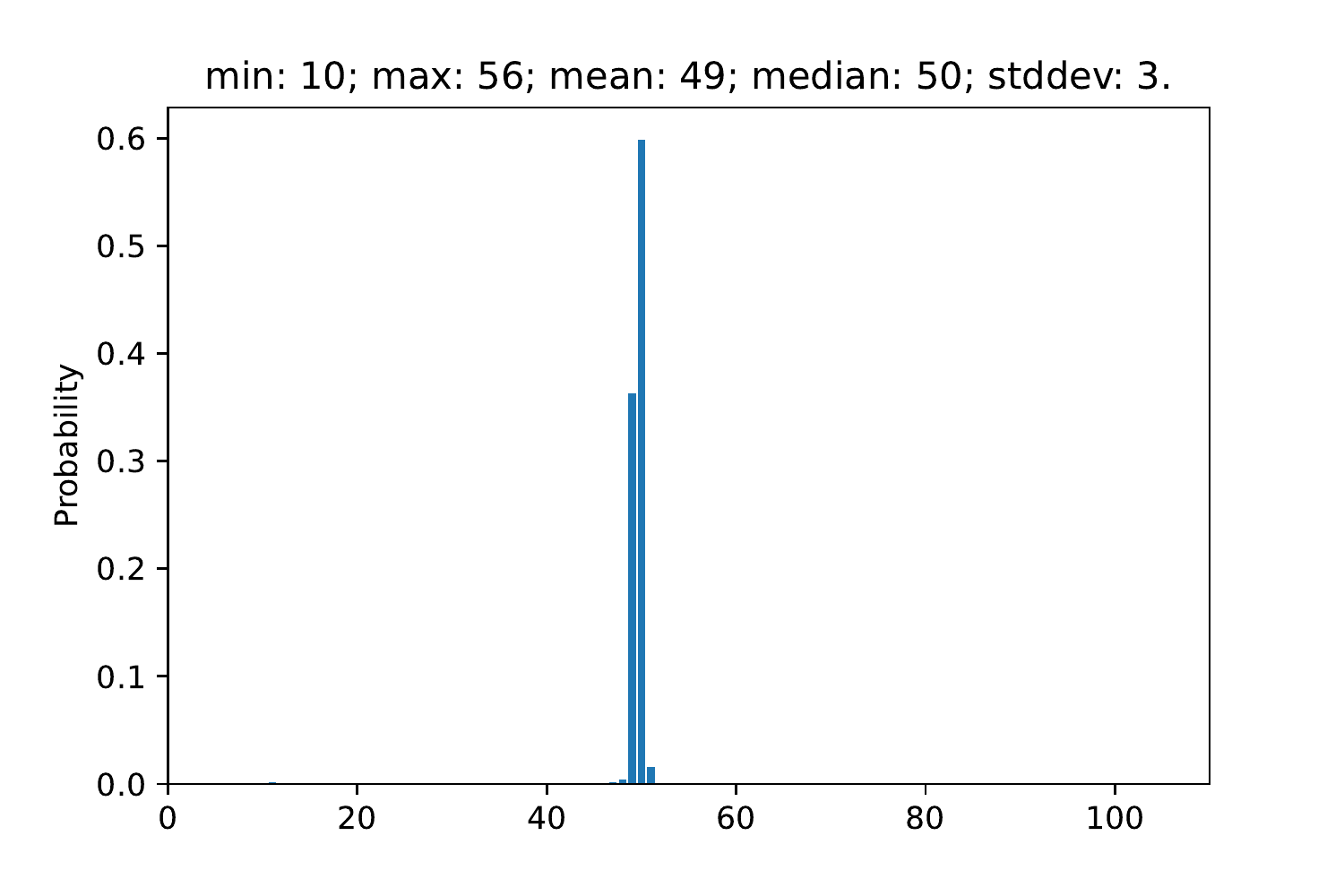}
  }
  \caption{Comparison of the 4 estimators $\hat{K}^A_H$, $\hat{K}^A_{\noH}$, $\hat{K}^\Gamma_H$ and $\hat{K}^\Gamma_{\noH}$ for the SDE example with a polynomial regression with degree $3$ and $t_1=9$.}
  \label{fig:cos_sde_pol_K_t9}
\end{figure}

 Figure~\ref{fig:cos_sde_pol_K_t9} shows a comparison of the four estimators defined in~\eqref{eq:Kstar-estimators} in the polynomial regression setting. The estimators $\hat{K}^A_H$ (resp. $\hat{K}^\Gamma_H$) and $\hat{K}^A_{\noH}$ (resp. $\hat{K}^\Gamma_{\noH}$) have very similar distributions. Simplifying $H$ in the ratio $\frac{\tr(BH^{-1})}{C_{Y|X}\tr(AH^{-1})}$ even tends to slightly reduce the variance of the estimator without significantly changing its mean. The estimators $\hat{K}^A_H$ and $\hat{K}^A_{\noH}$ based on the use of $\hat A^{anti}_{\bar{K}}$ have larger variances and may return very extreme values (between $4$ and $340$). On the contrary, the estimators $\hat{K}^\Gamma_H$ and $\hat{K}^\Gamma_{\noH}$ have very small standard deviation and show a much more concentrated probability function than the estimators based on $\hat A^{anti}_{\bar{K}}$. The use of $\Gamma_{2K}$ as an approximation of $A$ tends to produce  smaller approximations of $K^\s$: their empirical means are shifted by approximately $-20$. However, the gain profiles of Figure~\ref{fig:cos_pol_efficiency_t9} are almost flat for $K \ge 20$, hence this shift does not change the best gain attained by our method. As a conclusion, we recommend to use $\hat K_{\noH}^K$ to approximate $K^\s$.

In Figure~\ref{fig:cos_sde_loc_K_t9}, we observe very similar behaviours for the piecewise constant approximation setting as the ones we described above for the polynomial regression framework. However, we note that the estimation of the matrix $H$ is more difficult than in the previous polynomial framework, especially for the intervals with few data. This explains heuristically  why the estimators without $H$ are less noisy.

\begin{figure}[htbp]
  \centering\subfloat[Distribution of $\hat{K}^A_{H}$ with $\bar K=4$ and $t_1=9$]{
    \label{fig:cos_sde_loc_K_H_A}
    \includegraphics[keepaspectratio, width=.45\textwidth]{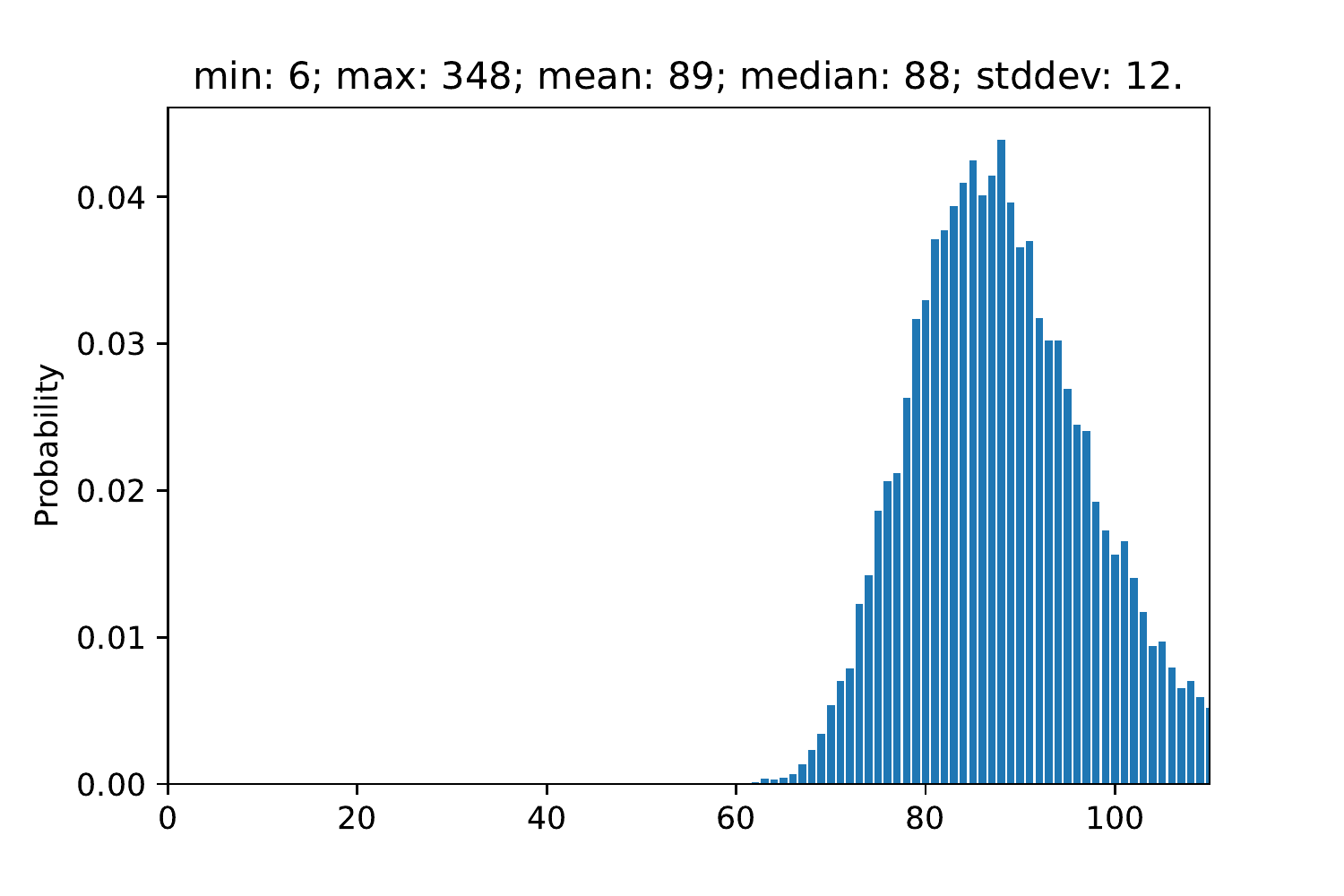}
  }
  \subfloat[Distribution of $\hat{K}^A_{\noH}$ with $\bar K=4$]{
    \label{fig:cos_sde_loc_K_noH_A}
    \includegraphics[keepaspectratio, width=.45\textwidth]{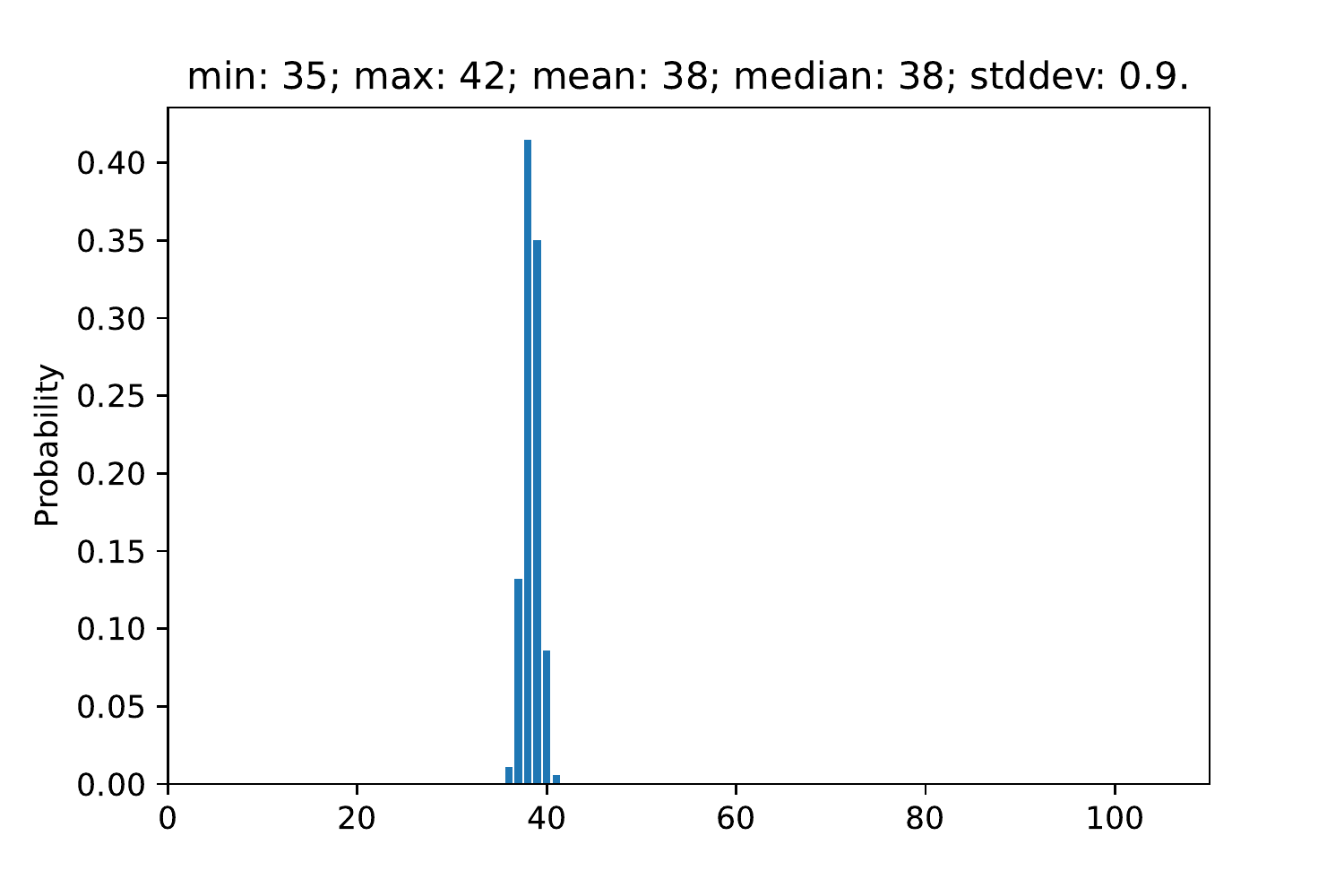}
  }\\
  \subfloat[Distribution of $\hat{K}^\Gamma_H$ with $\bar K=32$]{
    \label{fig:cos_sde_loc_K_H_Gamma2K}
    \includegraphics[keepaspectratio, width=.45\textwidth]{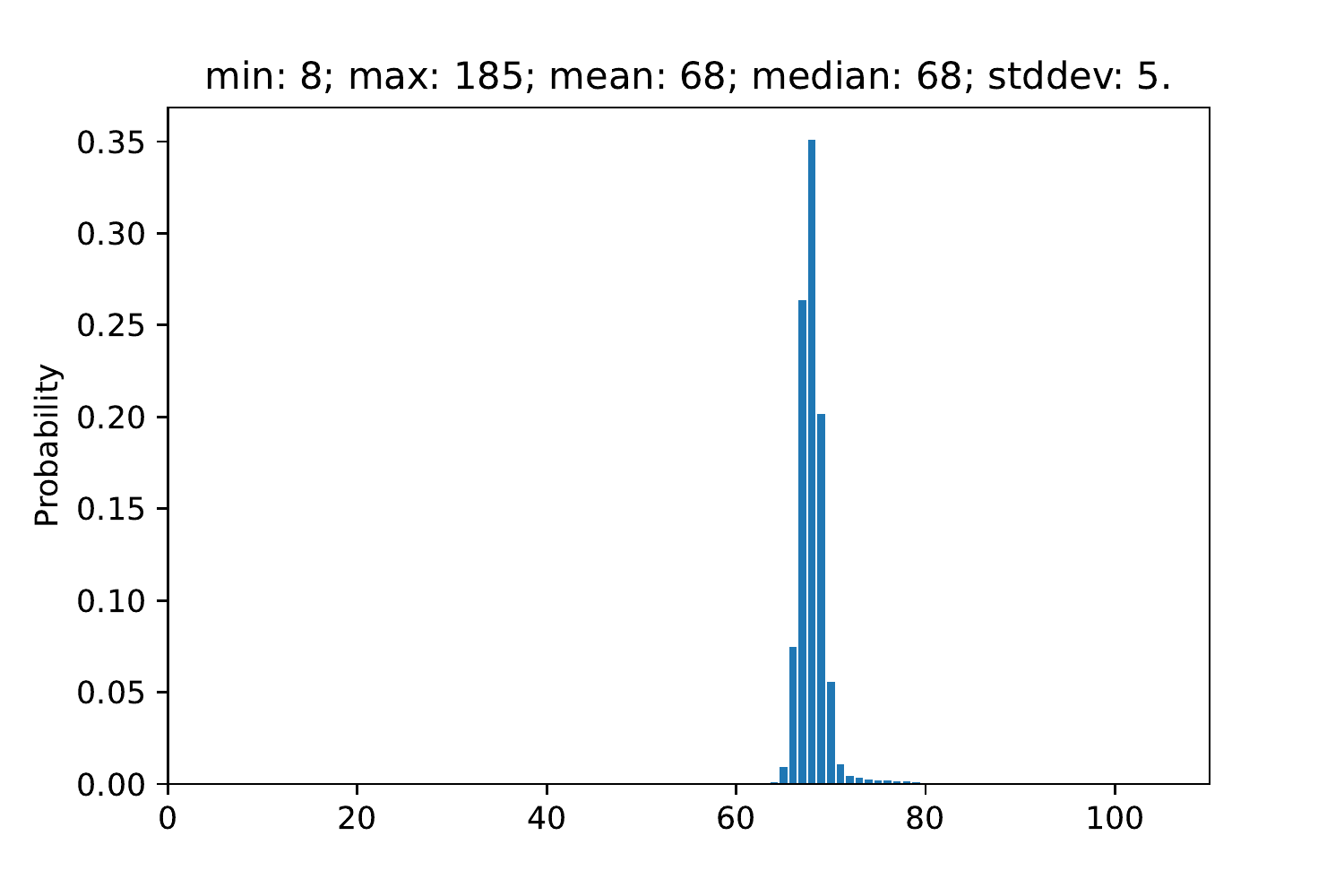}
  }
  \subfloat[Distribution of $\hat{K}^\Gamma_{\noH}$ with $\bar K=32$]{
    \label{fig:cos_sde_loc_K_noH_Gamma2K}
    \includegraphics[keepaspectratio, width=.45\textwidth]{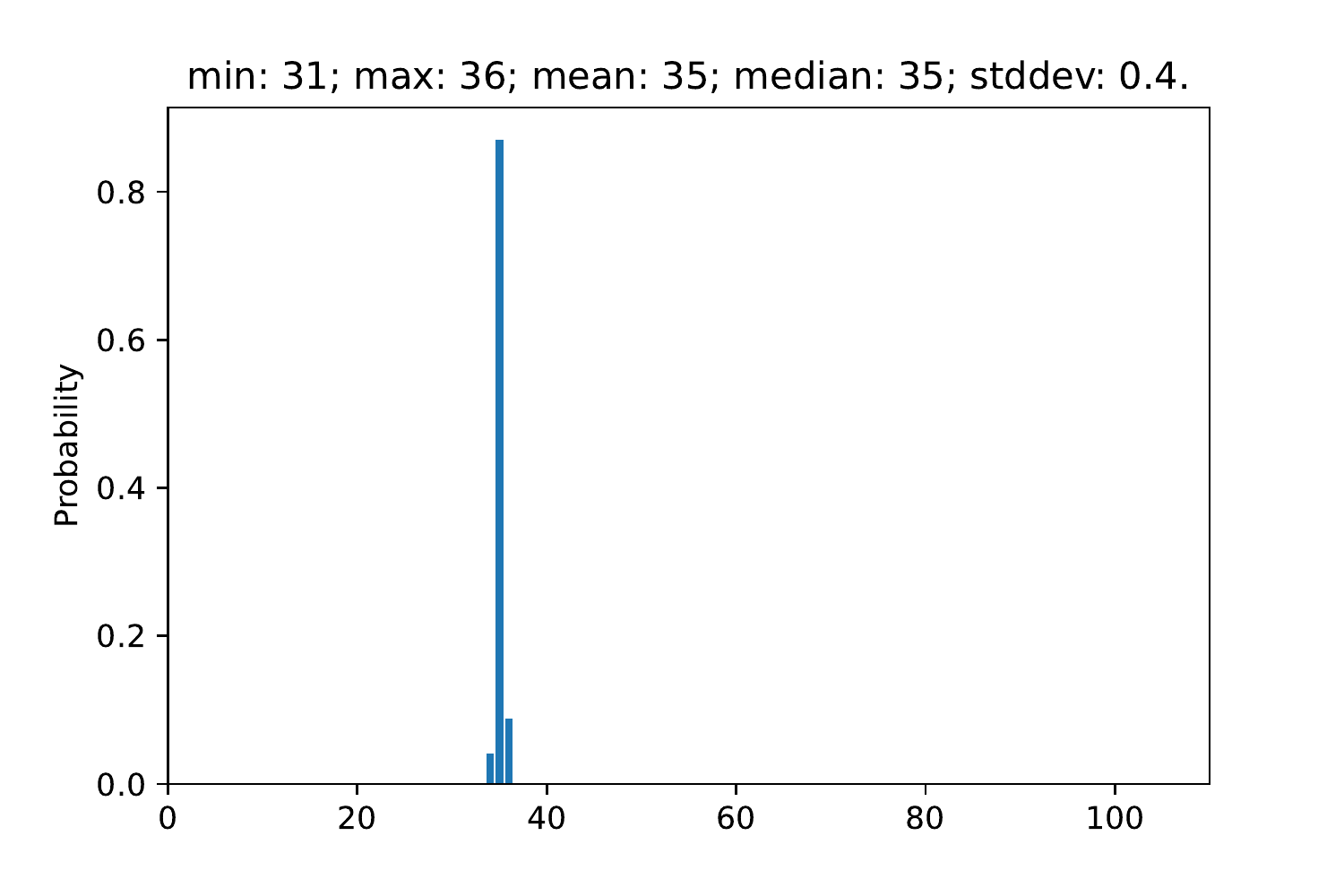}
  }
  \caption{Comparison of the 4 estimators $\hat{K}^A_H$, $\hat{K}^A_{\noH}$, $\hat{K}^\Gamma_H$ and $\hat{K}^\Gamma_{\noH}$ for the SDE example with a local regression with $M=50$ and $t_1=9$. }
  \label{fig:cos_sde_loc_K_t9}
\end{figure}

\subsection{An introductory example to risk management in insurance}

In the introduction of the present paper, we have indicated the relevance of computing conditional expectations for risk management. Here, we take back this example from~\cite{alfonsi2021multilevel} that mimics the methodology of the standard formula to calculate the Solvency Capital Requirement, in the sense that it applies a shock to the underlying asset (we refer to~\cite{alfonsi2021multilevel} for further details). We now describe this example and consider an asset whose price follows the Black-Scholes model:
$$S_t=S_0  \exp\left( \sigma W_t-\frac{\sigma^2}2t \right), \ t\ge 0,$$
where $S_0,\sigma>0$ and $W$ is a standard Brownian motion. In practice, insurance companies are interested in computing the losses of their portfolio  when a shock occurs in the economy.  Here, for simplicity, we will consider a butterfly option as a crude approximation of a true insurance portfolio. 
Thus, we are interested in a butterfly option with strikes $0<{\bf K}_1<{\bf K}_2$ that pays
$$\psi(S_T)=(S_T-{\bf K}_1)^++(S_T-{\bf K}_2)^+-2\left(S_T-\frac{{\bf K}_1+{\bf K}_2}2 \right)^+$$
at time $T>0$. The price of such an option at time~$t\in [0,T]$ is given by $\E[\psi(S_T)|S_t]$. Solvency II in its standard model assumes that there is a shock on the asset at time~$t\in (0,T)$ that multiplies its value by $1+s$, $s\in (-1,+\infty)$. Then, in the Black-Scholes model, we have to compute the following quantity
\begin{equation}
  \label{eq:loss}
 \mathcal{L}= \E[\max(\E[\psi(S_T)-\psi((1+s)S_T)|S_t],0)],
\end{equation}
which can be seen as the expected loss generated by the shock. In this particular example, $\E[\psi(S_T)-\psi((1+s)S_T)|S_t]$ has an explicit form by using the Black-Scholes formula, that we can use as a benchmark to compute the mean square error of our estimator of~\eqref{eq:loss}. 
Note that since $x\mapsto \max(x,0)$ is $1$-Lipschitz, we have
\begin{align*}
  \bigg|\E[\max(\varphi(\t,S_t),0)]&-\E[\max(\E[\psi(S_T)-\psi((1+s)S_T)|S_t],0)] \bigg|\\ &\le \sqrt{\E\left[\left( \E[\psi(S_T)-\psi((1+s)S_T)|S_t] -\varphi(\t,S_t) \right)^2 \right]}.
\end{align*}
The estimator $\t^K_N$ minimizes empirically the right hand side, which gives at the same time an upper bound on the approximation error of the expected loss.

Here, we have used our approach to compute  $\E[\psi(S_T)-\psi((1+s)S_T)|S_t]$. Thus, we have $X=S_t$, $Y=X\exp\left(\sigma(W_T-W_t)-\frac{\sigma^2}2 (T-t) \right)$ and $C_X=C_{Y|X}$ (the simulation of $X$ and of $Y$ given $X$ both require to sample one normal random variable)\footnote{Note that $C_X=C_{Y|X}$ is particular to the Black-Scholes model for which exact simulation is possible. For a more general diffusion, one typically uses a discretization scheme to approximate it, like in Subsection~\ref{Subsec_SDE}. Then, we rather get $C_{Y|X} \approx \frac{T-t}tC_X$ and the computational gain may be important when $t\to T$.}. 
We have taken $s=0.2$, $t=1$ and $T=2$ and consider the local regression with $M=50$, using the same transformation as the one used for the SDE example presented in Subsection~\ref{Subsec_SDE}.

Figure~\ref{fig:butterfly_efficiency} plots the multiplicative computational gain as a function of~$K$, while Figure~\ref{fig:butterfly_K} shows the empirical distribution of the different estimators~\eqref{eq:Kstar-estimators}. We see from Figure~\ref{fig:butterfly_K} that most of the computational gain is realized for $K\ge 5$. Similarly to the previous example, Figure~\ref{fig:butterfly_K} shows that the estimator $\hat{K}^\Gamma_{\noH}$    is  a good one to choose $K$: it has few fluctuations and avoid the issue of estimating~$\hat{H}$.

We now focus on the numerical approximation of~\eqref{eq:loss}. Figure~\ref{fig:butterfly_loss} illustrates the mean square error on the estimated expected loss as a function of~$(N,K)$ for a given computational budget, as explained in the introduction of Section~\ref{Sec_num}. More precisely, from  the sample $(\t^K_{N'(N,K),j}, 1\le j\le J)$, we compute:
$$ \frac 1 J \sum_{j=1}^J \left(\E[\max(\varphi(\t^K_{N'(N,K),j},S_t),0)|\t^K_{N'(N,K),j}]-\mathcal{L}\right)^2,$$
and plot the different values.
Here, we compute $$\E[\max(\varphi(\t^K_{N'(N,K),j},S_t),0)|\t^K_{N'(N,K),j}]=\int_{\R} \max(\varphi(\t^K_{N'(N,K),j},S_0 e^{\sigma \sqrt{t} x -\sigma^2 t/2}),0) \frac{e^{-x^2/2}}{\sqrt{2 \pi}}dx$$ and $\mathcal{L}$ by numerical integration, using the Black-Scholes formula for $\mathcal{L}$. We find $\mathcal{L}\approx 3.077$. We first note from Figure~\ref{fig:butterfly_loss} that in this example, as in all the other ones in Section~\ref{Sec_num}, the choice $K=1$ that is commonly used is suboptimal. Numerically, the optimal choice of $K$ seems to be $K^\s=8$ or $K^\s=9$, which is in line with the estimators $\hat{K}^\Gamma_{H}$ and $\hat{K}^\Gamma_{\noH}$. However, any choice of $K$ between 5 and 20 leads to an MSE that is close to the optimal one, which confirms that a precise estimation of~$K^\s$ is not needed to take the benefit of the proposed method. 

\begin{figure}[h!tbp]
  \begin{center}
  \includegraphics[scale=0.7]{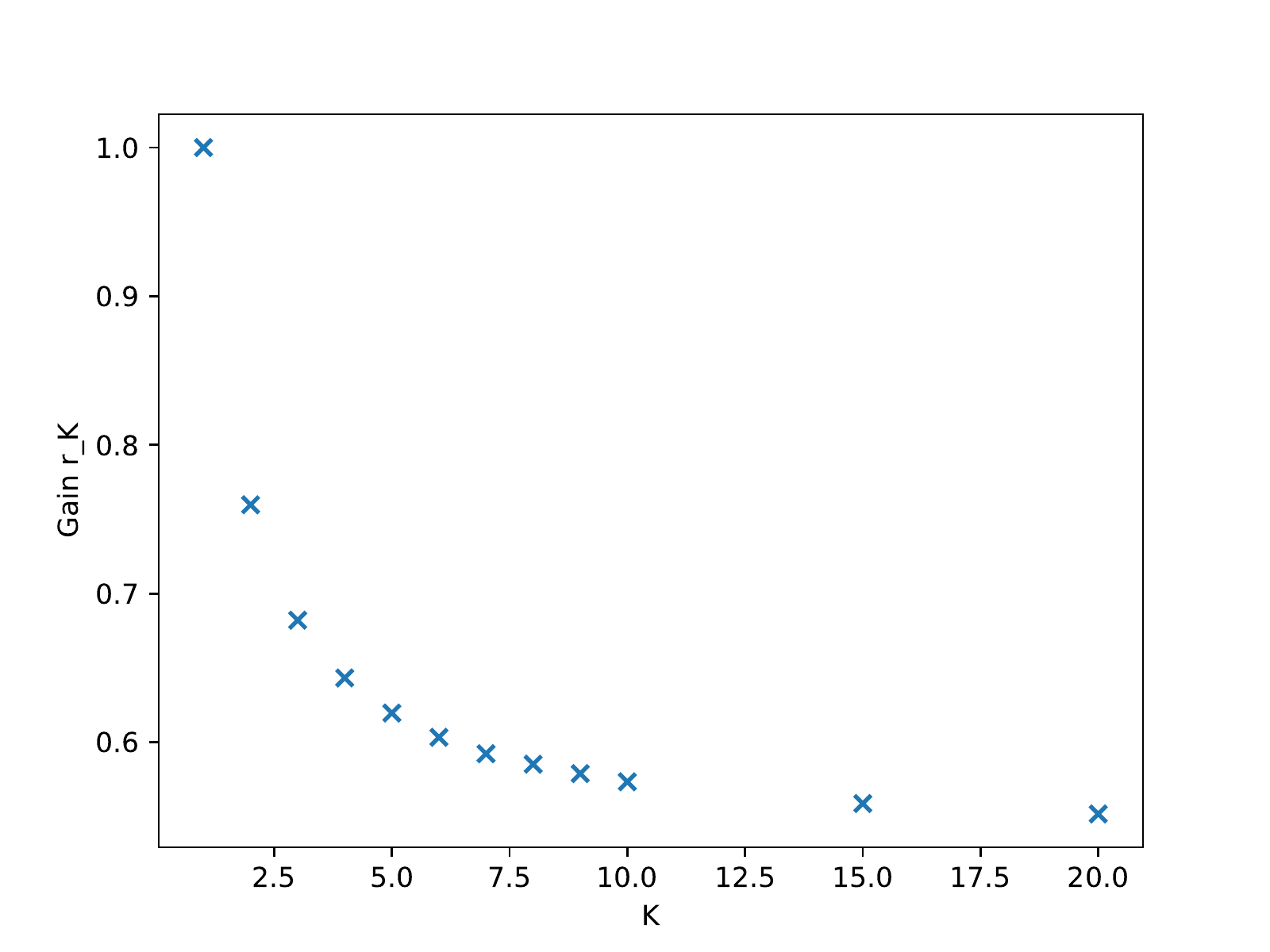}
  \end{center}
  \caption{Computational multiplicative gain as a function of~$K$ estimated with~\eqref{eq:hat_r_K} for the butterfly example with a local regression with $M=50$.}
  \label{fig:butterfly_efficiency}
\end{figure}

\begin{figure}[htbp]
  \centering
  \subfloat[Distribution of $\hat{K}^A_{H}$ with $\bar K=4$]{
    \label{fig:butterfly_K_H_A}
    \includegraphics[keepaspectratio, width=.45\textwidth]{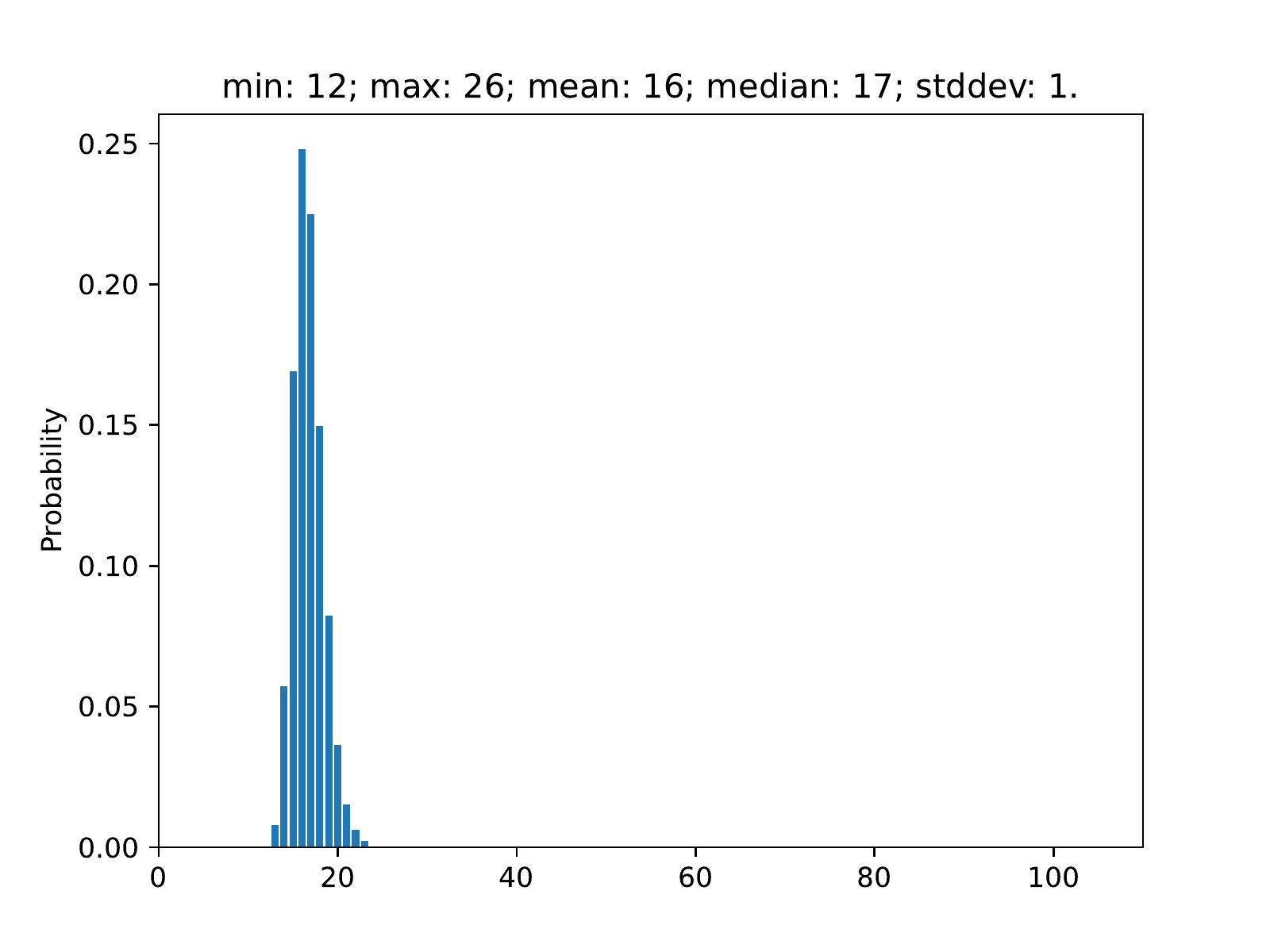}
  }
  \subfloat[Distribution of $\hat{K}^A_{\noH}$ with $\bar K=4$]{
    \label{fig:butterfly_K_noH_A}
    \includegraphics[keepaspectratio, width=.45\textwidth]{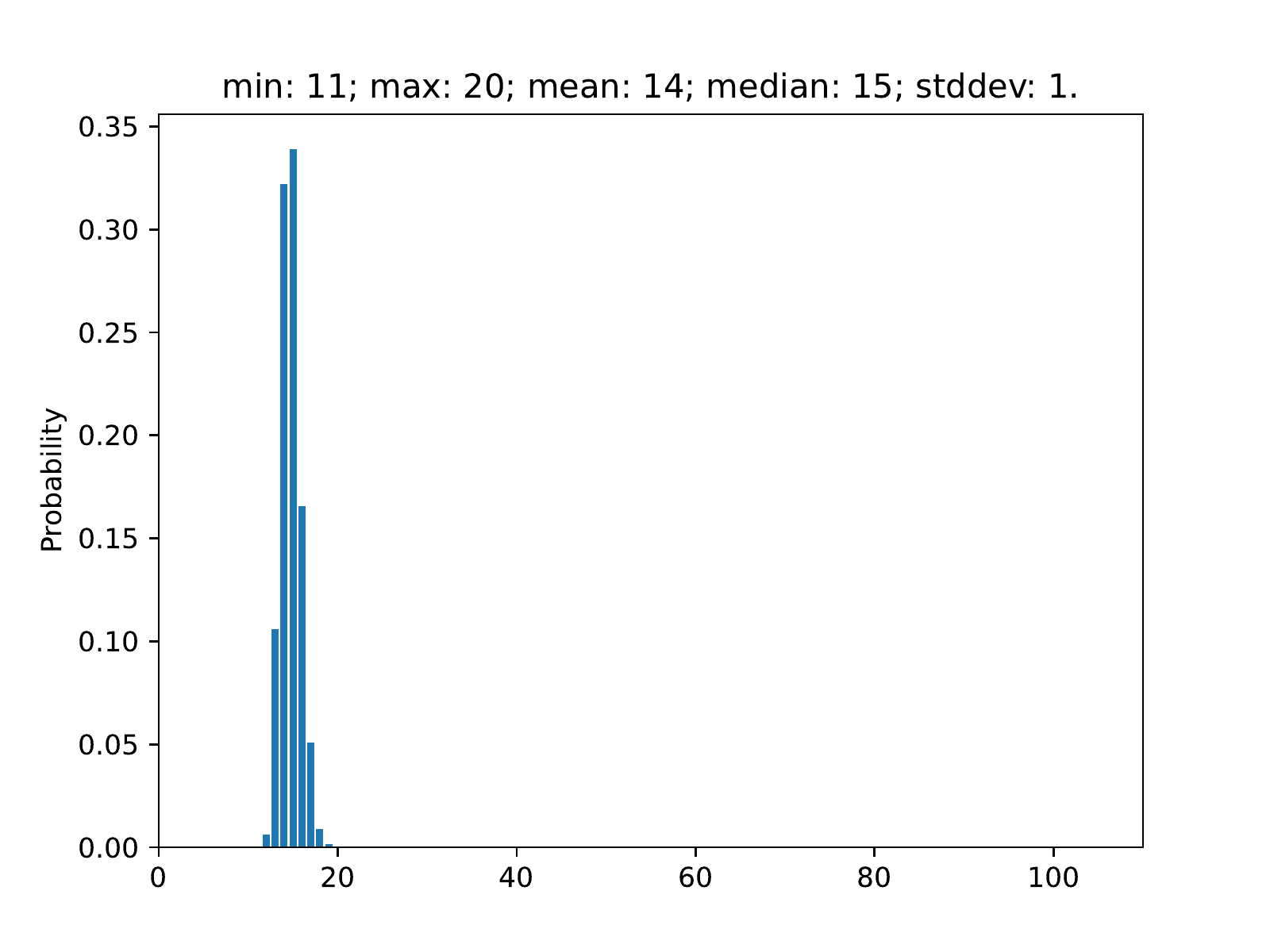}
  }\\
  \subfloat[Distribution of $\hat{K}^\Gamma_H$ with $\bar K=32$]{
    \label{fig:butterfly_K_H_Gamma2K}
    \includegraphics[keepaspectratio, width=.45\textwidth]{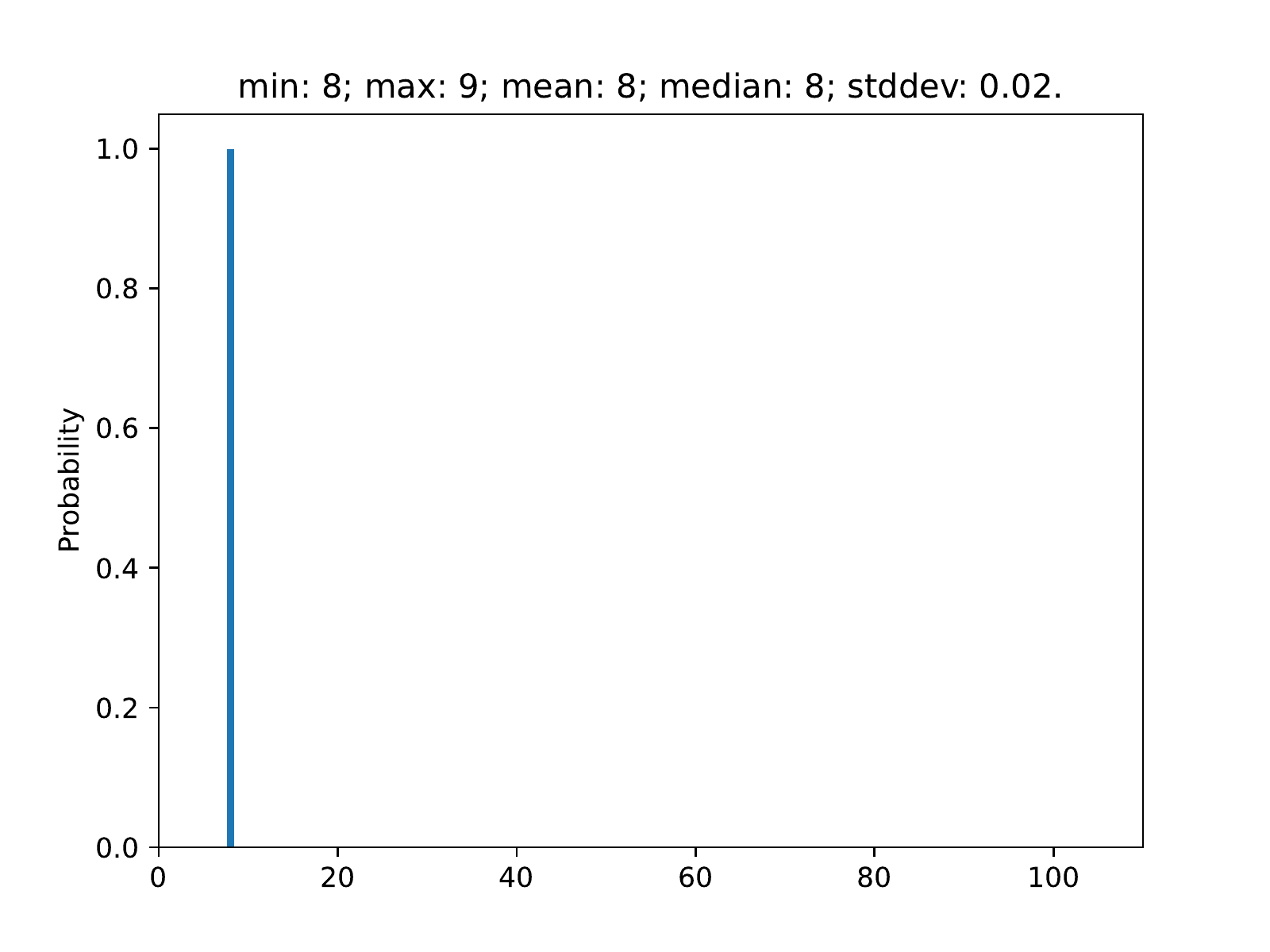}
  }
  \subfloat[Distribution of $\hat{K}^\Gamma_{\noH}$ with $\bar K=32$]{
    \label{fig:butterfly_K_noH_Gamma2K}
    \includegraphics[keepaspectratio, width=.45\textwidth]{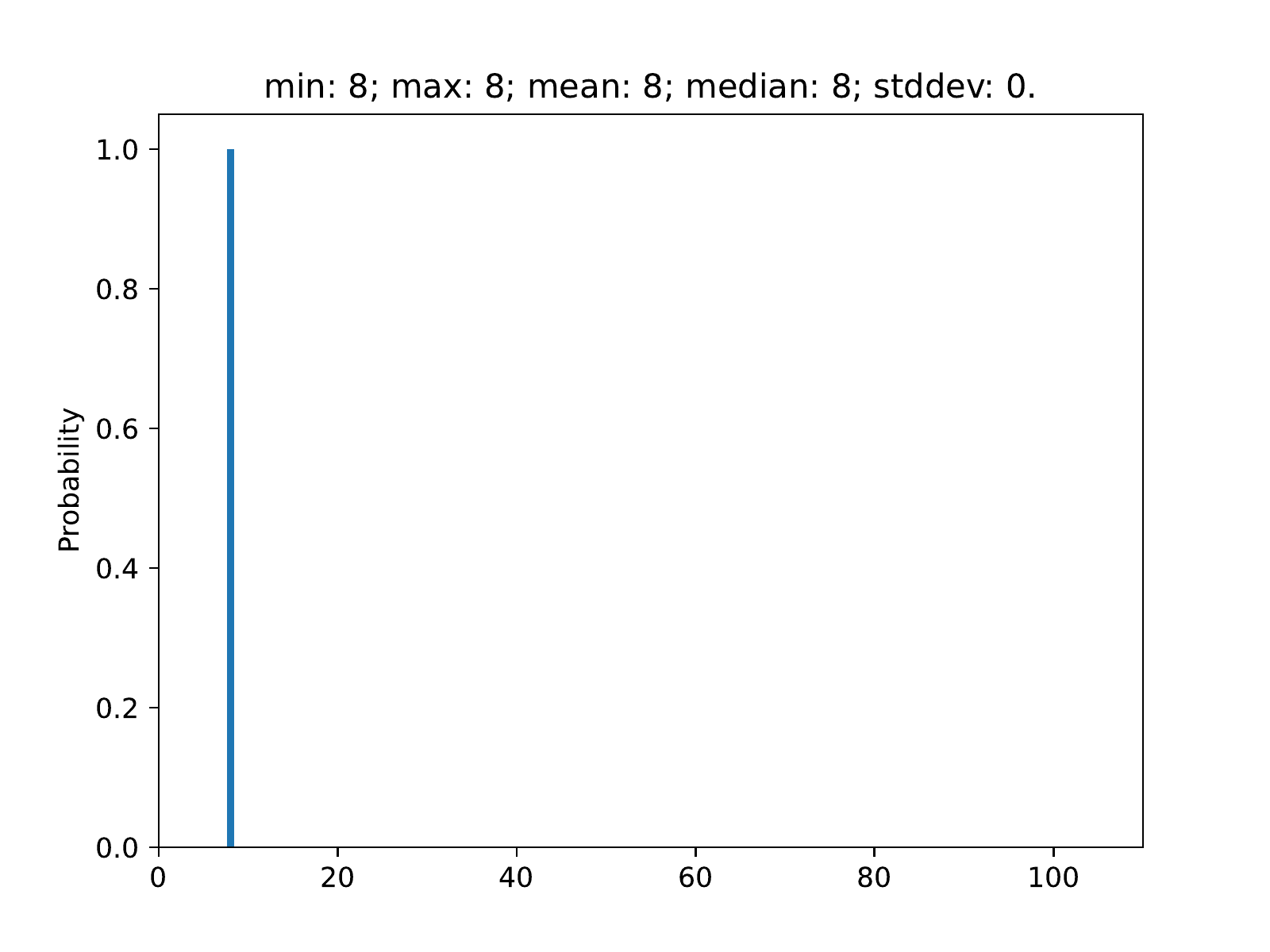}
  } 
  \caption{Comparison of the 4 estimators $\hat{K}^A_H$, $\hat{K}^A_{\noH}$, $\hat{K}^\Gamma_H$
    and $\hat{K}^\Gamma_{\noH}$ for the butterfly example with a local regression with $M=50$.}
  \label{fig:butterfly_K}
\end{figure}

\begin{figure}[h!tbp]
  \begin{center}
  \includegraphics[scale=0.7]{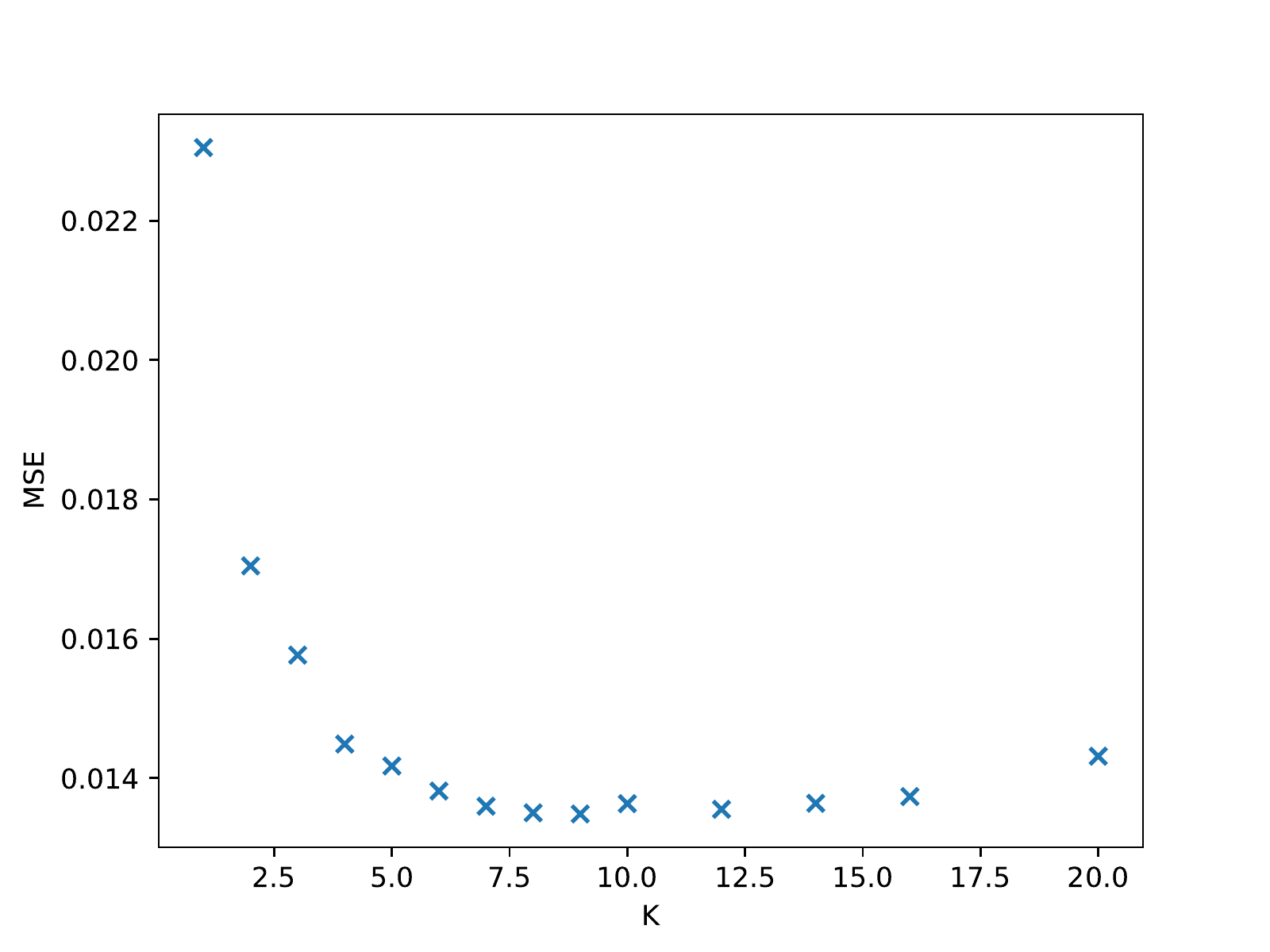}   
  \end{center} 
  \caption{Computation of the mean square error as a function of~$K$ with a local regression with $M=50$. }
  \label{fig:butterfly_loss}
\end{figure}

\newpage
\section{Conclusion}

In this work, we have investigated how to balance the computational effort between inner and outer simulations when computing conditional expectations with least-square Monte Carlo. The computational gain can be significant when the computational cost $C_{Y|X}$ is small with respect to $C_X$, and when the family $(\varphi(\t,X))_{\t}$ well approximates the conditional expectation $\E[f(Y)|X]$. 

We have proposed several estimators to approximate the optimal number of inner simulations in practice. Numerical simulations have shown that the estimators $\hat K^\Gamma_H$ and $\hat K^\Gamma_{\noH}$ have much smaller standard deviations. Although they provide smaller estimations of the optimal number of inner samples $K^\s$, they almost attain the best gain and should be used in practice in favour of those relying on $\hat A^{anti}$. When it comes to choosing between $\hat K^\Gamma_H$ and $\hat K^\Gamma_{\noH}$, one should keep in mind that $H$ is a Hessian matrix, whose computation may be extremely costly and noisy. The effect of removing $H$ in $\hat K^\Gamma$ is to reduce the noise, and  in all our experiments, this estimator  almost reaches the optimal gain. Then, as the best trade-off between accuracy and ease of computation, we suggest to use $K^\Gamma_{\noH}$.

\clearpage

\appendix
\section{Technical results}
We start by recalling  the uniform law of large number, that can be found e.g. in \cite[Lemma A1, p.~67]{ShRu93}.
\begin{lemma}\label{lem:ulln} Let $X:\Omega \to \R^d$ be a random variable, $\psi:\R^q \times \R^d\to \R$ a measurable function and $C\subset \R^q$ a compact set. Let $(X_i)_{i\ge 1}$ be an iid sequence with the same distribution as~$X$. If $C\ni \t \mapsto \psi(\t,X)$ is a.s. continuous and $\E[\sup_{\t \in C} |\psi(\t,X)|]<\infty$, then
  $$\sup_{\t\in C}\left|\frac 1N \sum_{i=1}^N \psi(\t,X_i) - \E[\psi(\t,X)] \right| \underset{N\to \infty}\to 0, \ a.s.$$
\end{lemma}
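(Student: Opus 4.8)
The plan is to prove this classical uniform law of large numbers by combining the ordinary pointwise strong law with a compactness and modulus-of-continuity argument. First I would note that the hypothesis $\E[\sup_{\t \in C}\abs{\psi(\t, X)}] < \infty$ forces $\psi(\t, X) \in L^1$ for every fixed $\t$, so the usual strong law of large numbers already gives $\inv{N}\sum_{i=1}^N \psi(\t, X_i) \to \E[\psi(\t, X)]$ almost surely for each individual $\t \in C$. The whole difficulty is to promote this pointwise statement into one that is uniform over the (uncountable) compact set $C$.

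The key device is the modulus of continuity
$$ w(\t, \delta, x) = \sup_{\t' \in C, \, \abs{\t' - \t} \le \delta} \abs{\psi(\t', x) - \psi(\t, x)}. $$
By the a.s. continuity of $\t \mapsto \psi(\t, X)$ on the compact set $C$, we have $w(\t, \delta, X) \to 0$ a.s. as $\delta \downarrow 0$; since moreover $0 \le w(\t, \delta, X) \le 2\sup_{\t \in C}\abs{\psi(\t, X)}$, which is integrable, dominated convergence yields $\E[w(\t, \delta, X)] \to 0$ as $\delta \downarrow 0$. Measurability of the random suprema $\sup_{\t \in C}\abs{\psi(\t, X)}$ and $w(\t, \delta, X)$ is guaranteed because, by continuity, these suprema coincide with suprema over a countable dense subset of $C$.

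Fix $\varepsilon > 0$. For each $\t \in C$ I would pick $\delta_\t > 0$ with $\E[w(\t, \delta_\t, X)] \le \varepsilon$, and extract from the cover $(B(\t, \delta_\t))_{\t \in C}$ a finite subcover centered at $\t_1, \dots, \t_m$. For $\t$ in the ball around $\t_j$, the triangle inequality through $\t_j$ bounds $\abs{\inv{N}\sum_{i=1}^N \psi(\t, X_i) - \E[\psi(\t, X)]}$ by the sum of the three quantities $\inv{N}\sum_{i=1}^N w(\t_j, \delta_{\t_j}, X_i)$, $\E[w(\t_j, \delta_{\t_j}, X)]$ and $\abs{\inv{N}\sum_{i=1}^N \psi(\t_j, X_i) - \E[\psi(\t_j, X)]}$. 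Applying the strong law to the finitely many integrable functions $\psi(\t_j, \cdot)$ and $w(\t_j, \delta_{\t_j}, \cdot)$ and intersecting the corresponding full-measure events, I obtain almost surely
$$ \limsup_{N} \sup_{\t \in C} \abs{\inv{N}\sum_{i=1}^N \psi(\t, X_i) - \E[\psi(\t, X)]} \le 2\max_{1 \le j \le m} \E[w(\t_j, \delta_{\t_j}, X)] \le 2\varepsilon. $$
Letting $\varepsilon$ run through a sequence decreasing to $0$ (and again intersecting countably many full-measure events) yields the claim.

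The only genuinely delicate points are the measurability of the random suprema and the bookkeeping needed to merge the almost-sure null sets into a single one; both are standard and rest on the separability of the compact $C$ together with the continuity hypothesis. The conceptual heart of the argument, and the step I expect to carry the real weight, is the dominated-convergence control $\E[w(\t, \delta, X)] \to 0$ of the modulus of continuity, since it is precisely what reduces the uncountable supremum over $C$ to a finite one to which the ordinary strong law applies.
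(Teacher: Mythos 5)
Your proof is correct. Note, however, that the paper does not prove this lemma at all: it is stated as a recalled result with a pointer to \cite[Lemma A1, p.~67]{ShRu93}, so there is no in-paper argument to compare against. What you have written is the standard Wald-type proof of that cited uniform law of large numbers (modulus of continuity $w(\t,\delta,x)$, dominated convergence to make $\E[w(\t,\delta,X)]$ small, a finite subcover of the compact set $C$, the three-term triangle inequality through the centers $\t_j$, and the ordinary strong law applied to the finitely many integrable functions $\psi(\t_j,\cdot)$ and $w(\t_j,\delta_{\t_j},\cdot)$), and all the steps go through: integrability of $w(\t_j,\delta_{\t_j},X)$ follows from the domination by $2\sup_{\t\in C}|\psi(\t,X)|$, and measurability of the suprema from separability of $C$ plus a.s.\ continuity, exactly as you note. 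So your proposal supplies a complete, self-contained justification of a statement the paper merely imports.
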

The next lemma solves the optimisation problem that arises to find the best estimator for a given computational budget.
\begin{lemma}\label{lem:optim}
  Let $a,b,\bar{c}>0$.   As $c\to +\infty$, we have
  $$\inf_{x, y\in \N^*: x+xy \bar{c} \le c}\frac{a}{x}+\frac{b}{xy}\sim_{c\to \infty} \frac 1c \left[a+b\bar{c}+ \frac{b}{a\bar{c}}\left(\nu\left( \frac{b}{a\bar{c}}\right)+\nu\left( \frac{b}{a\bar{c}}\right)^{-1} \right)\right],$$
 and the following solution is asymptotically optimal
\begin{equation}\label{optim_discret}
    x^\star(c)=\left\lfloor \frac{c}{1+y^\star \bar{c}} \right \rfloor, \ y^\star=\nu\left( \frac{b}{a\bar{c}}\right),
\end{equation}
in the sense that it satisfies $x^\star(c)+x^\star(c)y^\star \bar{c}\le c$ and
$$ \inf_{x, y\in \N^*: x+xy \bar{c} \le c}\frac{a}{x}+\frac{b}{xy} \sim_{c\to \infty} \frac{a}{x^\star(c)}+\frac{b}{x^\star(c)y^\star}.$$
\end{lemma}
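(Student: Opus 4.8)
The plan is to exploit the factorization $\frac{a}{x}+\frac{b}{xy}=\frac{1}{x}\bigl(a+\tfrac{b}{y}\bigr)$, which for every fixed $y$ is strictly decreasing in $x$. Consequently an optimal $x$ should make the budget constraint as tight as integrality permits, i.e. $x=\lfloor c/(1+y\bar{c})\rfloor$, and the two-dimensional integer program collapses to a one-dimensional minimization over $y\in\N^*$. I would carry out this reduction first.

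Second, I would record a lower bound valid for every $c$: any feasible pair satisfies $x(1+y\bar{c})\le c$, hence $\frac1x\ge\frac{1+y\bar{c}}{c}$ and $\frac{a}{x}+\frac{b}{xy}\ge\frac{1+y\bar{c}}{c}\bigl(a+\tfrac{b}{y}\bigr)=\frac1c\bigl[a+b\bar{c}+\Psi(y)\bigr]$, where $\Psi(y):=a\bar{c}\,y+\tfrac{b}{y}$. Thus the whole problem reduces to minimizing the strictly convex one-variable function $\Psi$ over the positive integers.

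The heart of the argument, and the step I expect to require the most care, is this discrete minimization, because it is where the combinatorial definition of $\nu$ must be matched exactly. I would examine the sign of the consecutive difference $\Psi(y+1)-\Psi(y)=a\bar{c}-\frac{b}{y(y+1)}$, which is nonnegative precisely when $y(y+1)\ge\frac{b}{a\bar{c}}$. Since these differences increase with $y$, the integer minimizer $y^\star$ is characterised by $(y^\star-1)y^\star\le\frac{b}{a\bar{c}}\le y^\star(y^\star+1)$, which are exactly the inequalities defining $\nu\bigl(\tfrac{b}{a\bar{c}}\bigr)$ in Definition~\ref{def_nu}. Hence $y^\star=\nu\bigl(\tfrac{b}{a\bar{c}}\bigr)$ and $\min_{y\in\N^*}\Psi(y)=\Psi(y^\star)$; the boundary ties (when $\tfrac{b}{a\bar{c}}=(y-1)y$) are harmless since the two candidate integers then give the same value of $\Psi$.

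Finally I would obtain a matching upper bound by evaluating at the explicit candidate $y=y^\star$, $x=x^\star(c)=\lfloor c/(1+y^\star\bar{c})\rfloor$, whose feasibility $x^\star(c)(1+y^\star\bar{c})\le c$ is immediate from the floor. Writing $x^\star(c)=\frac{c}{1+y^\star\bar{c}}\bigl(1+O(1/c)\bigr)$ gives $\frac{a}{x^\star(c)}+\frac{b}{x^\star(c)\,y^\star}=\frac1c\bigl[a+b\bar{c}+\Psi(y^\star)\bigr]+O(1/c^2)$, so the floor contributes only an $O(1/c^2)$ correction that is absorbed into the $o(1/c)$ remainder. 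Comparing this with the lower bound of the second step identifies the leading constant as $a+b\bar{c}+\Psi(y^\star)$ and establishes the asserted asymptotic optimality of $(x^\star(c),y^\star)$, which completes the proof.
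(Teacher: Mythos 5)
Your proof is correct and follows essentially the same route as the paper: both reduce the problem to minimizing the one-variable convex function $y\mapsto a\bar{c}\,y+b/y$ over $\N^*$ (you via the lower bound $1/x\ge(1+y\bar{c})/c$, the paper by relaxing $x$ to a positive real), and both identify the integer minimizer with $\nu\left(\tfrac{b}{a\bar{c}}\right)$ by comparing neighbouring integers, so the approaches are interchangeable. One remark worth keeping: the leading constant you obtain, $a+b\bar{c}+a\bar{c}\,y^\star+b/y^\star$, is the one that actually comes out of the computation (and is what the paper's own proof derives before its final line), whereas the constant displayed in the lemma statement, $a+b\bar{c}+\tfrac{b}{a\bar{c}}\left(\nu+\nu^{-1}\right)$, appears to contain a typo, as one checks for instance with $a=\bar{c}=1$, $b=4$, where the true value is $9/c$ rather than $15/c$.
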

\begin{proof}
  We  consider the semi-discrete minimization problem $\inf_{x>0, y\in \N^*: x+xy \bar{c} \le c}\frac{a}{x}+\frac{b}{xy}$. For each $y \in \N^*$, the optimal choice is to take $x=\frac{c}{1+y\bar{c}}$, and the infimum is given by
  $$g(y):=\frac{a+b\bar{c}}{c}+\frac{b}{cy}+\frac{a\bar{c}}{c}y.$$
  Let $y^\star_0=\sqrt{\frac{b}{a\bar{c}}}$. We check easily that $g$ is decreasing on $(0,y^\star_0)$ and increasing on $(y^\star_0,+\infty)$. Therefore the minimum on $\N^*$ is reached by~$1$ if $y^\star_0\le 1$, and by $p$ or $p+1$ if $y^\star_0\in [p,p+1]$ for some $p\in \N^*$. We compare these two candidates and  rewrite
  $g(y)=\frac{a}{c}+\frac{b\bar{c}}{c}+\frac{a\bar{c}}{c} \left(y+\frac{(y^\star_0)^2}{y}\right)$.
  Since $$p+\frac{z^2}{p} \le p+1+\frac{z^2}{p+1} \iff z^2\le p(p+1),$$ we get that $p$ is optimal if $(y^\star_0)^2\le p(p+1)$ and $p+1$ is optimal if  $(y^\star_0)^2\ge p(p+1)$ (both are optimal for $(y^\star_0)^2= p(p+1)$). Therefore, the infimum is reached by $y^\star=\nu\left( \frac{b}{a\bar{c}}\right)$ (see Definition~\ref{def_nu}), and we have
  $$\inf_{x>0, y\in \N^*: x+xy \bar{c}=c}\frac{a}{x}+\frac{b}{xy}\sim_{c\to \infty} \frac 1c \left[a+b\bar{c}+ \frac{b}{a\bar{c}}\left(\nu\left( \frac{b}{a\bar{c}}\right)+\nu\left( \frac{b}{a\bar{c}}\right)^{-1} \right)\right] .  $$
  Now, we simply notice that $\inf_{x>0, y\in \N^*: x+xy \bar{c}=c}\frac{a}{x}+\frac{b}{xy}\le \inf_{x, y\in \N^*: x+xy \bar{c}=c}\frac{a}{x}+\frac{b}{xy}$ and that
  $\frac{a}{x^\star(c)}+\frac{b}{x^\star(c)y^\star}\sim_{c\to \infty} \frac 1c \left[a+b\bar{c}+ \frac{b}{a\bar{c}}\left(\nu\left( \frac{b}{a\bar{c}}\right)+\nu\left( \frac{b}{a\bar{c}}\right)^{-1} \right)\right]$.
\end{proof}
The next lemma gives a sufficient condition to get some uniform integrability in the central limit theorem.
\begin{lemma}\label{lem_ui_TCL} Let $(Z_i)_{i\ge 1}$ be an iid sequence of random variables in $\R^d$ such that $\E[|Z_1|^{2+\eta}]<\infty$ for some $\eta>0$. Let $\bar{Z}_N=\frac 1N \sum_{i=1}^N Z_i$. Then, the sequence $(N|\bar{Z}_N-\E[Z_1]|^2)_{N\ge 1}$ is uniformly integrable.
\end{lemma}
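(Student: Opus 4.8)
The plan is to deduce uniform integrability from boundedness in $L^{1+\delta}$ for some $\delta>0$, which is a classical sufficient condition. First I would center the variables: setting $W_i=Z_i-\E[Z_1]$, we have $\E[W_1]=0$ and $\E[\abs{W_1}^{2+\eta}]<\infty$, and with $S_N=\sum_{i=1}^N W_i$ the quantity to control becomes $N\abs{\bar Z_N-\E[Z_1]}^2=\abs{S_N}^2/N$. Since $\abs{S_N}^2=\sum_{j=1}^d (S_N^{(j)})^2$, we have $N\abs{\bar Z_N-\E[Z_1]}^2=\sum_{j=1}^d N(\bar W_N^{(j)})^2$, a finite sum; as uniform integrability is stable under finite sums, it suffices to treat each coordinate, so I would reduce to the one-dimensional case with $\E[\abs{W_1^{(j)}}^{2+\eta}]\le \E[\abs{W_1}^{2+\eta}]<\infty$.

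Taking $\delta=\eta/2$, it then suffices to show $\sup_N \E[(\abs{S_N}^2/N)^{1+\eta/2}]<\infty$, that is $\E[\abs{S_N}^{2+\eta}]=O(N^{1+\eta/2})$. The key step is the Marcinkiewicz--Zygmund inequality, which for independent centred scalar variables and exponent $p=2+\eta\ge 2$ provides a constant $B_p$ with
$$\E[\abs{S_N}^{2+\eta}] \le B_p\,\E\left[\left(\sum_{i=1}^N W_i^2\right)^{(2+\eta)/2}\right].$$
Since $(2+\eta)/2=1+\eta/2>1$, I would then apply Jensen's inequality to the convex map $x\mapsto x^{1+\eta/2}$ and the empirical average $\inv{N}\sum_{i=1}^N W_i^2$, which gives
$$\E\left[\left(\sum_{i=1}^N W_i^2\right)^{1+\eta/2}\right] = N^{1+\eta/2}\,\E\left[\left(\inv{N}\sum_{i=1}^N W_i^2\right)^{1+\eta/2}\right] \le N^{1+\eta/2}\,\E[\abs{W_1}^{2+\eta}].$$

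Combining the two bounds yields $\E[\abs{S_N}^{2+\eta}]\le B_p\,\E[\abs{W_1}^{2+\eta}]\,N^{1+\eta/2}$, hence, writing $\xi_N=\abs{S_N}^2/N$, $\sup_N\E[\xi_N^{1+\eta/2}]\le B_p\,\E[\abs{W_1}^{2+\eta}]<\infty$. The family is thus bounded in $L^{1+\eta/2}$, and from $\E[\xi_N\ind{\xi_N>M}]\le M^{-\eta/2}\sup_N\E[\xi_N^{1+\eta/2}]$ we conclude that it is uniformly integrable. I expect the only genuine ingredient to be the Marcinkiewicz--Zygmund inequality (equivalently, Rosenthal's inequality, which gives $\E[\abs{S_N}^{2+\eta}]\le C_p(N\E[\abs{W_1}^{2+\eta}]+(N\E[W_1^2])^{1+\eta/2})=O(N^{1+\eta/2})$ just as well); everything else is Jensen and the standard $L^{1+\delta}$ criterion. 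A fully elementary expansion of $\abs{S_N}^{2+\eta}$ works only when $2+\eta$ is an even integer, so to cover all $\eta>0$ one really wants such a moment inequality, and this is the main (and essentially only) obstacle.
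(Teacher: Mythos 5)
Your proof is correct and follows essentially the same route as the paper: both arguments reduce the claim to the uniform moment bound $\E[(\sqrt{N}\,\abs{\bar Z_N-\E[Z_1]})^{2+\eta}]\le C\,\E[\abs{Z_1-\E[Z_1]}^{2+\eta}]$ and then invoke the standard $L^{1+\delta}$ criterion for uniform integrability. The only difference is that the paper cites this moment inequality from a reference, whereas you derive it yourself via Marcinkiewicz--Zygmund and Jensen (after a harmless coordinate-wise reduction), which is a correct and self-contained substitute.
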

\begin{proof}
This is a direct application of~\cite[Proposition 2.4]{GHJVW} that gives $$\E[  (N|\bar{Z}_N-\E[Z_1]|^2)^{1+\eta/2}]=\E[  (\sqrt{N}|\bar{Z}_N-\E[Z_1]|)^{2+\eta}]\le C_{2+\eta} \E[|Z_1-\E[Z_1]|^{2+\eta}],$$ for some constant $C_{2+\eta}<\infty$.
\end{proof}

\bibliographystyle{alpha}
%\nocite{*}
\bibliography{biblio.bib}

\begin{thebibliography}{GHJvW23}

\bibitem[ACIA21]{alfonsi2021multilevel}
Aur\'{e}lien Alfonsi, Adel Cherchali, and Jose~Arturo Infante~Acevedo.
\newblock Multilevel {M}onte-{C}arlo for computing the {SCR} with the standard
  formula and other stress tests.
\newblock {\em Insurance Math. Econom.}, 100:234--260, 2021.

\bibitem[BBR09]{BaResi2}
Daniel Bauer, Daniela Bergmann, and Andreas Reuss.
\newblock Solvency {II} and nested simulations -- a least-squares {M}onte
  {C}arlo approach.
\newblock {\em Preprint Universit\"at Ulm}, 2009.

\bibitem[BDM15]{BDM}
Mark Broadie, Yiping Du, and Ciamac~C. Moallemi.
\newblock Risk estimation via regression.
\newblock {\em Oper. Res.}, 63(5):1077--1097, 2015.

\bibitem[BKS10]{BKS}
Denis Belomestny, Anastasia Kolodko, and John Schoenmakers.
\newblock Regression methods for stochastic control problems and their
  convergence analysis.
\newblock {\em SIAM J. Control Optim.}, 48(5):3562--3588, 2009/10.

\bibitem[BRS12]{BaReSi}
Daniel Bauer, Andreas Reuss, and Daniela Singer.
\newblock On the calculation of the solvency capital requirement based on
  nested simulations.
\newblock {\em ASTIN Bulletin}, 42(2):453--499, 2012.

\bibitem[BT04]{BT}
Bruno Bouchard and Nizar Touzi.
\newblock Discrete-time approximation and {M}onte-{C}arlo simulation of
  backward stochastic differential equations.
\newblock {\em Stochastic Process. Appl.}, 111(2):175--206, 2004.

\bibitem[GHJvW23]{GHJVW}
Philipp Grohs, Fabian Hornung, Arnulf Jentzen, and Philippe von Wurstemberger.
\newblock A {P}roof that {A}rtificial {N}eural {N}etworks {O}vercome the
  {C}urse of {D}imensionality in the {N}umerical {A}pproximation of
  {B}lack-{S}choles {P}artial {D}ifferential {E}quations.
\newblock {\em Mem. Amer. Math. Soc.}, 284(1410):1--106, 2023.

\bibitem[GJ10]{GoJu}
Michael~B. Gordy and Sandeep Juneja.
\newblock Nested simulation in portfolio risk measurement.
\newblock {\em Management Science}, 56(10):1833--1848, 2010.

\bibitem[GKKW02]{GKKW}
L\'{a}szl\'{o} Gy\"{o}rfi, Michael Kohler, Adam Krzy\.{z}ak, and Harro Walk.
\newblock {\em A distribution-free theory of nonparametric regression}.
\newblock Springer Series in Statistics. Springer-Verlag, New York, 2002.

\bibitem[GLW05]{GLW}
Emmanuel Gobet, Jean-Philippe Lemor, and Xavier Warin.
\newblock A regression-based {M}onte {C}arlo method to solve backward
  stochastic differential equations.
\newblock {\em Ann. Appl. Probab.}, 15(3):2172--2202, 2005.

\bibitem[Gob16]{Gobet}
Emmanuel Gobet.
\newblock {\em Monte-{C}arlo methods and stochastic processes}.
\newblock CRC Press, Boca Raton, FL, 2016.
\newblock From linear to non-linear.

\bibitem[KNK18]{KrNiKo}
Anne-Sophie Krah, Zoran Nikoli\'{c}, and Ralf Korn.
\newblock A least-squares {M}onte {C}arlo framework in proxy modeling of life
  insurance companies.
\newblock {\em Risks}, 6(2), 2018.

\bibitem[LS15]{LS}
Francis~A. Longstaff and Eduardo~S. Schwartz.
\newblock {Valuing American Options by Simulation: A Simple Least-Squares
  Approach}.
\newblock {\em The Review of Financial Studies}, 14(1):113--147, 06 2015.

\bibitem[RS93]{ShRu93}
Reuven~Y. Rubinstein and Alexander Shapiro.
\newblock {\em Discrete event systems}.
\newblock Wiley Series in Probability and Mathematical Statistics: Probability
  and Mathematical Statistics. John Wiley \& Sons Ltd., Chichester, 1993.

\end{thebibliography}

\end{document}